\theoremstyle{plain}
\newtheorem{theorem}{Theorem}[section]
\newtheorem{lemma}[theorem]{Lemma}
\newtheorem{corollary}[theorem]{Corollary}
\newtheorem{proposition}[theorem]{Proposition}
\theoremstyle{definition}
\newtheorem{definition}[theorem]{Definition}
\theoremstyle{remark}
\newtheorem{remark}{Remark}
    \DeclareMathOperator\supp{supp}
 \date{}
\begin{document}

\title{Lifespan estimates for local solutions to the semilinear wave equation in Einstein~--~de~Sitter spacetime}

\author{Alessandro Palmieri} 
\maketitle

\begin{abstract}
In this paper, we prove some blow-up results for the semilinear wave equation in generalized Einstein-de Sitter spacetime by using an iteration argument and we derive upper bound estimates for the lifespan. In particular, we will focus on the critical cases which require the employment of a slicing procedure in the iterative mechanism. Furthermore, in order to deal with the main critical case, we will introduce a non-autonomous and parameter dependent Cauchy problem for a linear ODE of second order, whose explicit solution will be determined by applying the theory of special functions.
\end{abstract}

\begin{flushleft}
\textbf{Keywords} Semilinear wave equation, Einstein -- de Sitter spacetime, power nonlinearity, critical case, lifespan estimates, modified Bessel functions
\end{flushleft}

\begin{flushleft}
\textbf{AMS Classification (2010)} Primary: 35B44, 35L05, 35L71 ; Secondary: 33C10
\end{flushleft}


\section{Introduction}

In the last four decades, the proof of the Strauss conjecture concerning the critical exponent of the initial value problem for the semilinear wave equation with power nonlinearity required the effort of many mathematicians worldwide. Nowadays, we know that the critical exponent for the Cauchy problem
\begin{align*}
\begin{cases}  v_{tt} - \Delta v=|v|^p & x\in \mathbb{R}^n, \ t>0, \\
v(0,x)= \varepsilon v_0(x) & x\in \mathbb{R}^n, \\
 v_t(0,x)= \varepsilon v_1(x) & x\in \mathbb{R}^n,
\end{cases}
\end{align*} is the so -- called \emph{Strauss exponent} $p_{\mathrm{Str}}(n)$  (cf.  \cite{Joh79,Kato80,Gla81-g,Gla81-b,Sid84,Sch85,Zhou95,LS96,GLS97,YZ06,Zhou07}), that is, the positive root of the quadratic equation
\begin{align*}
(n-1)p^2-(n+1)p-2=0.
\end{align*} 
We are also interested in not only the critical exponent but also lifespan, the maximal existence time of the solution,
when the global in time existence cannot be expected.
See the introduction of \cite{IKTW19} for the complete picture of the lifespan estimates for the classical semilinear wave equation with power nonlinearity.

While the situation is completely understood in the Euclidean case with flat metric on $\mathbb{R}^n$, in the last years several papers have been devoted to study the semilinear wave equation in the spacetime $\mathbb{R}^{1+n}_+$ equipped with different Lorentzian metrics. The semilinear wave equation in Schwarzschild has been investigated in \cite{CG06,MMTT10,LMSTW14,LLM20} in the $1+3$ dimensional case. Moreover, the wave (or Klein-Gordon) equation in de Sitter and anti -- de Sitter spacetimes  have been investigated in the linear and semilinear case in \cite{Yag09,YagGal09,Yag12,GalYag2017,ER18,Gal18} and \cite{Gal03,YagGal08,YG09,YG09Rend}, respectively. Finally, the wave equation in Einstein --  de Sitter spacetime has been considered in \cite{GalKinYag10,GalYag14,GalYag17EdS}. In this paper, we shall examine the semilinear wave equation with power nonlinearity in a \emph{generalized Einstein -- de Sitter spacetime}. More precisely, let us consider the semilinear equation with singular coefficients
\begin{align} \label{Semi EdeS k Psi}
\varphi_{tt} -t^{-2k} \Delta \varphi +2t^{-1} \varphi_t =|\varphi|^p,
\end{align} where $k\in [0,1)$ and $p>1$. We call this model the semilinear wave equation in a generalized EdeS spacetime since for $k=2/3$ and $n=3$ Equation \eqref{Semi EdeS k Psi} is the semilinear wave equation in Einstein -- de Sitter (EdeS) spacetime with power nonlinearity.

In \cite[Theorem 1.3]{GalYag17EdS} the authors proved that for $$1<p<\max\big\{p_0(n,k),p_1(n,k)\big\}$$ a local in time solution to the corresponding Cauchy problem (with initial data prescribed at the initial time $t=1$) blows up in finite time, provided that the initial data fulfill certain integral sign conditions. Here $p_0(n,k)$ is the positive root of the quadratic equation
\begin{align}\label{intro equation critical exponent general case}
\left((1-k)n +1\right)p^2- \left((1-k)n +3+2k\right)p -2(1-k)=0,
\end{align} while 
\begin{align} \label{intro def p1}
p_1(n,k) \doteq 1+ \frac{2}{(1-k)n}.
\end{align} 
Furthermore, in \cite{GalYag17EdS} it is also shown that, for the semilinear wave equation in EdeS spacetime, the blow -- up is  the effect of the semilinear term. For this reason we shall focus our analysis on the effect of the nonlinear term, prescribing the Cauchy data at the initial time $t=1$.

Performing the transformation $u=t \varphi$, \eqref{Semi EdeS k Psi} becomes equivalent to the following semilinear equation for $u$
\begin{align} \label{Semi EdeS k u}
u_{tt} -t^{-2k} \Delta u  =t^{1-p}|u|^p.
\end{align}

In this paper, we investigate the blow -- up dynamic for \eqref{Semi EdeS k u} and, in particular, we will focus on the upper bound estimates for the lifespan and on the treatment of the critical case $p=\max\{p_0(n,k),p_1(n,k)\}$. More precisely, in the next sections we are going to provide a complete picture of the upper bound estimates for the lifespan of local in time solutions to \eqref{Semi EdeS k u} when $1<p\leqslant \max\{p_0(n,k),p_1(n,k)\}$.

In the subcritical case, we employ a Kato -- type lemma on the blow -- up dynamic for a second order ordinary differential inequality. On the other hand, in the critical case an iteration argument combined with a slicing procedure is applied. More particularly, for $p=p_0(n,k)$ we adapt the approach from \cite{WakYor18,WakYor18Damp} to the time -- dependent semilinear model \eqref{Semi EdeS k u}.

\subsection{Notations}

Throughout the paper we will employ the following notations: $\phi_k(t)\doteq \frac{t^{1-k}}{1-k}$ denotes a distance function produced by the speed of propagation $a_k(t)=t^{-k}$, while the amplitude of the light cone is given by the function 
\begin{align} \label{def A k}
A_k(t)\doteq \int_1^t \tau^{-k} \mathrm{d}\tau = \phi_k(t) -\phi_k(1);
\end{align} the ball with radius $R$ around the origin is denoted $B_R$; $f\lesssim g$ means that there exists a positive constant $C$ such that $f\leqslant C g$ and, similarly, for $f\gtrsim g$; $\mathrm{I}_\nu$ and $\mathrm{K}_\nu$ denote the modified Bessel function of first and second kind of order $\nu$, respectively; finally, 
\begin{align} \label{def N(k)}
N(k) \doteq \frac{1-2k+\sqrt{4k^2-4k+8}}{2(1-k)}
\end{align} denotes the threshold for the spatial dimension in determining the dominant exponent between $p_0(n,k)$ and  $p_1(n,k)$ (more specifically, $p_0(n,k)>p_1(n,k)$ if and only if $n>N(k)$, while $p_0(n,k)\leqslant p_1(n,k)$ for $n\leqslant N(k)$).

\subsection{Main results}

The main results of this work are the following blow -- up results that combined together provide a full picture of the critical case $p=\max\{p_0(n,k),p_1(n,k)\}$ for the Cauchy problem 
\begin{align}\label{Semi EdeS k} 
\begin{cases}  u_{tt} - t^{-2k}\Delta u= t^{1-p}|u|^p & x\in \mathbb{R}^n, \ t\in (1,T), \\
u(1,x)= \varepsilon u_0(x) & x\in \mathbb{R}^n, \\
 u_t(1,x)= \varepsilon u_1(x) & x\in \mathbb{R}^n,
\end{cases}
\end{align} where $p>1$, $\varepsilon>0$ is a parameter describing the size of initial data and $k\in [0,1)$.

Before stating the main results, let us introduce the notion of energy solution to the semilinear Cauchy problem \eqref{Semi EdeS k}.

\begin{definition} \label{Def energy sol} Let $u_0\in H^1(\mathbb{R}^n)$ and $u_1\in L^2(\mathbb{R}^n)$. We say that 
\begin{align*}
u\in \mathcal{C} \big([1,T), H^1(\mathbb{R}^n)\big) \cap \mathcal{C}^1 \big([1,T), L^2(\mathbb{R}^n)\big)\cap L^p_{\mathrm{loc}}\big([1,T)\times \mathbb{R}^n\big)
\end{align*} is an energy solution to \eqref{Semi EdeS k} on $[1,T)$ if $u$ fulfills $u(1,\cdot) = \varepsilon u_0$ in $H^1(\mathbb{R}^n)$ and the integral relation
\begin{align}
& \int_{\mathbb{R}^n} \partial_t u(t,x) \psi (t,x) \, \mathrm{d}x -\varepsilon \int_{\mathbb{R}^n} u_1(x) \psi (1,x) \, \mathrm{d}x 
-\int_1^t\int_{\mathbb{R}^n} \big( \partial_t u (s,x) \psi_s(s,x) -s^{-2k} \nabla u(s,x) \cdot \nabla \psi (s,x)\big) \mathrm{d}x \, \mathrm{d}s \notag \\
& \quad = \int_1^t\int_{\mathbb{R}^n} s^{1-p} |u(s,x)|^p \psi (s,x) \, \mathrm{d} x \, \mathrm{d} s \label{integral identity def energy sol}
\end{align} for any $\psi\in \mathcal{C}_0^\infty([1,T)\times \mathbb{R}^n)$ and any $t\in (1,T)$.
\end{definition}

We point out that performing a further step of integration by parts in \eqref{integral identity def energy sol}, we find the integral relation
\begin{align}
& \int_{\mathbb{R}^n} \big( \partial_t u(t,x) \psi (t,x)-u(t,x)\psi_s(t,x)\big)  \mathrm{d}x -\varepsilon \int_{\mathbb{R}^n} \big( u_1(x) \psi (1,x)- u_0(x)\psi_s(1,x)\big)  \mathrm{d}x  \notag  \\ & \qquad 
+\int_1^t\int_{\mathbb{R}^n}   u (s,x)  \big( \psi_{ss}(s,x) -s^{-2k} \Delta \psi (s,x)\big) \mathrm{d}x \, \mathrm{d}s  = \int_1^t\int_{\mathbb{R}^n} s^{1-p} |u(s,x)|^p \psi (s,x) \, \mathrm{d} x \, \mathrm{d} s \label{integral identity  weak sol}
\end{align} for any $\psi\in \mathcal{C}_0^\infty([1,T)\times \mathbb{R}^n)$ and any $t\in (1,T)$.

\begin{remark}\label{Remark support} Let us stress that if the Cauchy data are compactly supported, say $\mathrm{supp}\, u_j \subset B_R$ for $j=0,1$ and for some $R>0$, then, for any $t\in (1,T)$ a local solution $u$ to \eqref{Semi EdeS k} satisfies the support condition $$\mathrm{supp} \, u(t, \cdot) \subset B_{R+A_k(t)} ,$$ where $A_k$ is defined by \eqref{def A k}. Therefore, in Definition \ref{Def energy sol} we may also consider test functions which are not compactly supported, namely, $\psi \in \mathcal{C}^\infty([1,T)\times \mathbb{R}^n)$. 
\end{remark}

\begin{theorem} \label{Theorem critical case p0}
Let $n \in \mathbb{N}^*$ such that $n>N(k)$ and $p=p_0(n,k)$. Let us assume that $u_0\in H^1(\mathbb{R}^n)$ and $u_1\in L^2(\mathbb{R}^n)$ are nonnegative, nontrivial and compactly supported functions with supports contained in $B_R$ for some $R>0$. Let $$u\in \mathcal{C} \big([1,T), H^1(\mathbb{R}^n)\big) \cap \mathcal{C}^1 \big([1,T), L^2(\mathbb{R}^n)\big)\cap L^p_{\mathrm{loc}}\big([1,T)\times \mathbb{R}^n\big)$$ be an energy solution to \eqref{Semi EdeS k} according to Definition \ref{Def energy sol}  with lifespan $T=T(\varepsilon)$ and satisfying the support condition $\mathrm{supp} \, u(t,\cdot)\subset B_{A_k(t)+R}$ for any $t\in (1,T)$. 

Then, there exists a positive constant $\varepsilon_0=\varepsilon_0(u_0,u_1,n,p,k,R)$ such that for any $\varepsilon\in (0,\varepsilon_0]$ the energy solution $u$ blows up in finite time. Moreover, the upper bound estimate for the lifespan
\begin{align*}
T(\varepsilon)\leqslant \exp\left(C\varepsilon^{-p(p-1)}\right)
\end{align*} holds, where the constant $C>0$ is independent of $\varepsilon$.
\end{theorem}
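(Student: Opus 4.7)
The plan is to couple an iteration argument on two carefully chosen functionals of $u$ with a slicing procedure adapted to the critical exponent $p_0(n,k)$, in the spirit of the Wakasa--Yordanov scheme cited in the introduction.

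First I would build a separated-variables solution $\psi_k(t,x)=\lambda_k(t)\varphi_1(x)$ of the linear equation $\psi_{tt}-t^{-2k}\Delta\psi=0$, where $\varphi_1$ is the standard positive Yordanov--Zhang eigenfunction with $\Delta\varphi_1=\varphi_1$ and asymptotic $\varphi_1(x)\sim|x|^{-(n-1)/2}e^{|x|}$. The time factor $\lambda_k$ is then forced to solve the non-autonomous second-order ODE $\lambda''(t)=t^{-2k}\lambda(t)$ alluded to in the abstract; after the change of variable $\tau=\phi_k(t)$ it becomes a modified Bessel equation, and the decaying branch can be taken proportional to $\sqrt{t}\,\mathrm{K}_{\nu(k)}(\phi_k(t))$ for a suitable index $\nu(k)$. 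Classical asymptotics of $\mathrm{K}_\nu$ at $0$ and $+\infty$ then yield sharp two-sided estimates on $\lambda_k$.

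With this test function in hand I would introduce the functionals
\begin{align*}
F_0(t):=\int_{\mathbb{R}^n} u(t,x)\,\mathrm{d}x, \qquad F_1(t):=\int_{\mathbb{R}^n} u(t,x)\,\psi_k(t,x)\,\mathrm{d}x.
\end{align*}
Integrating \eqref{Semi EdeS k} in $x$ and using the support condition of Remark \ref{Remark support} gives $F_0''(t)\geq 0$, so that the sign assumptions on $u_0,u_1$ imply $F_0(t)\gtrsim\varepsilon$ on $(1,T)$. Plugging $\psi_k$ into the weak formulation \eqref{integral identity def energy sol} and exploiting that $\psi_k$ kills the linear side of the equation produces a representation of $F_1(t)$ in terms of the Cauchy data and the positive semilinear source $\int_1^t\!\!\int_{\mathbb{R}^n}s^{1-p}|u|^p\psi_k\,\mathrm{d}x\,\mathrm{d}s$, from which $F_1(t)\gtrsim\varepsilon\lambda_k(t)$ past a fixed threshold. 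Hölder's inequality on the support $\{|x|\leq R+A_k(t)\}$, with weight $\psi_k$, then converts the nonlinear source into a recursive lower bound for $F_0$ in which the critical identity \eqref{intro equation critical exponent general case} precisely cancels the polynomial-in-$t$ powers and leaves a purely logarithmic gain.

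The slicing step then iterates this inequality on an increasing sequence of sub-intervals $[L_j t_\ast,T)$ with $L_j\nearrow 2$ (for instance $L_j=2-2^{-j}$), producing lower bounds of the form
\begin{align*}
F_0(t)\gtrsim D_j\,\varepsilon^{1+p+\cdots+p^j}\bigl(\log\bigl(t/(L_j t_\ast)\bigr)\bigr)^{a_j},\qquad t\geq L_{j+1}t_\ast,
\end{align*}
with $a_j$ growing linearly in $j$ and constants $D_j$ uniformly controlled so that the recursion does not collapse. Optimizing the resulting constraint in $j$ and comparing with the upper bound on $F_0$ forced by the $H^1$-regularity of $u$ yields $\varepsilon^{p(p-1)}\log T\lesssim 1$, which rearranges into $T(\varepsilon)\leq\exp(C\varepsilon^{-p(p-1)})$. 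The main obstacle is twofold: (i) the sharp analysis of the Bessel ODE producing $\lambda_k$, so that $\psi_k$ simultaneously has the right decay to give the lower bound on $F_1$ and the right growth to absorb the weight in the Hölder step; and (ii) the bookkeeping of $D_j$ and $a_j$ across the iteration, since at the critical exponent $p=p_0(n,k)$ there is no slack and losing even a single logarithm in any round destroys the exponential lifespan---this is precisely what forces the slicing, rather than a plain iteration, in the argument.
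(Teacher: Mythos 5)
Your scaffolding matches the paper's in outline (Bessel analysis of $y''=\lambda^2t^{-2k}y$, a test function that kills the linear part in \eqref{integral identity def energy sol}, a sliced iteration), but the functional setup has a genuine gap: you work with a \emph{single} separated solution $\psi_k(t,x)=\lambda_k(t)\varphi_1(x)$, i.e.\ one fixed frequency $\lambda=1$, together with the plain average $F_0$. With that choice the H\"older step cannot produce a closable recursion with only logarithmic loss at $p=p_0(n,k)$. If you estimate $F_0$ against $\int|u|^p\,\mathrm{d}x$ using only the support condition, you get the frame \eqref{differential ineq U''}, whose powers balance exactly when $((1-k)n+1)(p-1)=p+1$, i.e.\ at $p=p_1(n,k)$ (cf.\ \eqref{condition p=p1 equiv}); since $n>N(k)$ forces $p_0(n,k)>p_1(n,k)$, this frame loses a polynomial factor at $p_0$ and the iteration collapses. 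If instead you estimate against $\int|u|^p\psi_k\,\mathrm{d}x$, the exponentials cancel only in the one-shot Yordanov--Zhang bound (the analogue of Lemma \ref{Lemma lower bound int |u|^p}): since $\lambda_k(s)\approx s^{k/2}\mathrm{e}^{-\phi_k(s)}$ while the kernel in the representation of $F_1$ grows like $\mathrm{e}^{\phi_k(t)-\phi_k(s)}$, an uncompensated exponential survives when you try to feed $F_1$ back into itself, so no recursion with merely logarithmic gain emerges. What the paper (following Wakasa--Yordanov) actually uses, and what is absent from your plan, is the \emph{average over a continuum of frequencies}: the weights $\xi_q,\eta_q$ in \eqref{def xi q}--\eqref{def eta q}, obtained by integrating $\mathrm{e}^{-\lambda(A_k(t)+R)}$ times hyperbolic factors times $\varphi_\lambda(x)\lambda^q$ in $\lambda\in(0,\lambda_0]$, built on the two-parameter kernels $y_0,y_1(t,s;\lambda,k)$ of Proposition \ref{Proposition representations y0 and y1} and the minimum-principle bounds of Proposition \ref{Proposition lower bound estimates y0 and y1}. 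Only this superposition yields the pointwise bound \eqref{upper bound xi q t=s}, $\xi_q(t,t,x;k)\lesssim\langle A_k(t)\rangle^{-\frac{n-1}{2}}\langle A_k(t)-|x|\rangle^{\frac{n-3}{2}-q}$, and with the choice $q=(n-1)/2-1/p$ the cone-concentration factor integrates to a single logarithm, giving the critical frame \eqref{iteration frame} in which the quadratic relation \eqref{equation critical exponent general case} defining $p_0$ cancels all polynomial powers.

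There are also two quantitative errors in your bookkeeping that would each sink the endgame. First, the log-exponents obey $a_{j+1}=p\,a_j+1$, so they grow geometrically, $\alpha_j=(p^{j+1}-1)/(p-1)$, \emph{not} linearly in $j$; with linearly growing $a_j$ the quantity $\varepsilon^{\,p^j}(\log t)^{a_j}$ tends to $0$ for every fixed $t$ as $j\to\infty$, and no lifespan bound can be extracted --- the divergence mechanism needs $\mathcal{U}(t)\geqslant\exp\big(p^j\log J(t,\varepsilon)\big)\times(\cdots)$ with $J(t,\varepsilon)\approx\varepsilon^p(\log t)^{1/(p-1)}$, whence $\log T\lesssim\varepsilon^{-p(p-1)}$. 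Second, at $p_0$ each round of the recursion costs a factor $\big(\log\langle A_k(s)\rangle\big)^{-(p-1)}$, so a second sequence of negative log-exponents $\beta_j=p^j-1$ must be tracked alongside $\alpha_j$; your single-exponent ansatz omits it, and it is precisely the interplay of $\alpha_j$ against $\beta_j$ that leaves the net factor $(\log t)^{p^{j}\cdot\frac{1}{p-1}}$ inside the exponential. Finally, the contradiction is not obtained by comparing with an upper bound ``forced by $H^1$-regularity'': one fixes $t$ beyond the $\varepsilon$-dependent threshold, lets $j\to\infty$, and observes that the lower bounds diverge, which is incompatible with finiteness of the functional for $t<T$.
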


\begin{theorem}  \label{Theorem critical case p1}
Let $n \in \mathbb{N}^*$ such that $n\leqslant N(k)$ and $p=p_1(n,k)$. Let us assume that $u_0\in H^1(\mathbb{R}^n)$ and $u_1\in L^2(\mathbb{R}^n)$ are nonnegative, nontrivial and compactly supported functions with supports contained in $B_R$ for some $R>0$. Let $$u\in \mathcal{C} \big([1,T), H^1(\mathbb{R}^n)\big) \cap \mathcal{C}^1 \big([1,T), L^2(\mathbb{R}^n)\big)\cap L^p_{\mathrm{loc}}\big([1,T)\times \mathbb{R}^n\big)$$ be an energy solution to \eqref{Semi EdeS k} according to Definition \ref{Def energy sol}  with lifespan $T=T(\varepsilon)$ and satisfying the support condition $\mathrm{supp} \, u(t,\cdot)\subset B_{A_k(t)+R}$ for any $t\in (1,T)$. 

Then, there exists a positive constant $\varepsilon_0=\varepsilon_0(u_0,u_1,n,p,k,R)$ such that for any $\varepsilon\in (0,\varepsilon_0]$ the energy solution $u$ blows up in finite time. Moreover, the upper bound estimate for the lifespan
\begin{align*}
T(\varepsilon)\leqslant \exp\left(C\varepsilon^{-(p-1)}\right)
\end{align*} holds, where the constant $C>0$ is independent of $\varepsilon$.
\end{theorem}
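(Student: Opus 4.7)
My plan is to adapt the test function/iteration scheme used in the subcritical part of the paper, but with a slicing procedure to squeeze out the logarithmic improvement characteristic of critical Fujita-type exponents. The key functional is the spatial integral
\begin{equation*}
U(t) \doteq \int_{\mathbb{R}^n} u(t,x)\, \mathrm{d}x.
\end{equation*}
Using Remark~\ref{Remark support} to take $\psi\equiv 1$ in the integral identity \eqref{integral identity def energy sol}, the diffusion term integrates away and one obtains the second-order identity
\begin{equation*}
U(t)= \varepsilon \int_{\mathbb{R}^n} u_0\, \mathrm{d}x + \varepsilon(t-1)\int_{\mathbb{R}^n} u_1\, \mathrm{d}x + \int_1^t\!\int_1^s \tau^{1-p}\int_{\mathbb{R}^n}|u(\tau,x)|^p\, \mathrm{d}x\, \mathrm{d}\tau\, \mathrm{d}s.
\end{equation*}
Positivity and nontriviality of the data yield the baseline $U(t)\geqslant C_0\varepsilon\, t$ for $t\geqslant 1$.

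Next I would combine this with Hölder's inequality on the support ball $B_{R+A_k(t)}$, whose volume is $\lesssim (1+t)^{n(1-k)}$. Since by definition \eqref{intro def p1} we have $n(1-k)(p-1)=2$ at $p=p_1(n,k)$, Hölder gives $\int_{\mathbb{R}^n}|u|^p\mathrm{d}x \gtrsim (1+t)^{-2}U(t)^p$, so that, after absorbing the initial data and collapsing $\tau^{1-p}(1+\tau)^{-2}\asymp \tau^{-1-p}$ for $\tau$ away from $1$, one obtains the nonlinear integral inequality
\begin{equation*}
U(t)\geqslant D\int_{\ell}^t\!\int_{\ell}^s \tau^{-1-p}\,U(\tau)^p\, \mathrm{d}\tau\, \mathrm{d}s
\end{equation*}
valid for $t\geqslant \ell$, for any $\ell>1$, with $D$ independent of $\ell$.

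Then I would iterate with slicing. Fix an increasing sequence $\{\ell_j\}$ with $\ell_j\uparrow L<\infty$ (e.g.\ $\ell_j=2-2^{-j}$) and prove by induction the lower bound
\begin{equation*}
U(t)\geqslant C_j\, \varepsilon^{p^j}\, t\, (\log t)^{b_j}\qquad \text{for } t\geqslant \ell_j,
\end{equation*}
where $b_0=0$ and $b_{j+1}=p\,b_j+1$, so $b_j=(p^j-1)/(p-1)$. The inductive step rests on the elementary computation $\int_{\ell_{j+1}}^{s}\tau^{-1}(\log\tau)^{pb_j}\mathrm{d}\tau\sim (\log s)^{pb_j+1}/(pb_j+1)$, followed by an analogous outer integration against $\mathrm{d}s$, which together produce one extra factor of $\log$ and the recursion $b_{j+1}=pb_j+1$. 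The slicing ensures that the asymptotic lower bounds are effective uniformly in $j$, and that the multiplicative losses $\sim 1/(pb_j+1)^2$ are polynomial in $j$, so that $p^{-j}\log C_j\to M_\infty\in\mathbb{R}$. Taking logarithms, dividing by $p^j$, and letting $j\to\infty$ (using $b_j p^{-j}\to 1/(p-1)$) gives
\begin{equation*}
p^{-j}\log U(t)\geqslant \log\varepsilon+M_\infty+\frac{\log\log t}{p-1}+o(1).
\end{equation*}
If $t$ is such that the right-hand side is strictly positive, then $U(t)=+\infty$, contradicting the assumed local existence. The threshold is $\log\log t> -(p-1)(\log\varepsilon+M_\infty)$, which for small $\varepsilon$ reduces to $\log t>C\varepsilon^{-(p-1)}$, i.e., $T(\varepsilon)\leqslant \exp\!\bigl(C\varepsilon^{-(p-1)}\bigr)$.

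The main obstacle I expect is bookkeeping in the slicing rather than conceptual: one must choose $\ell_j$ so that $\log\ell_{j+1}\ll \log t$ uniformly in $j$ once $t\geqslant L$, so that the $(\log s)^{pb_j+1}-(\log\ell_{j+1})^{pb_j+1}$ difference can actually be absorbed into $\tfrac12(\log s)^{pb_j+1}$ even though the exponent $pb_j+1$ blows up geometrically; and one must carefully verify $\sum_j p^{-j}\log(pb_j+1)<\infty$ so that the constants $C_j$ retain a nontrivial limit $M_\infty$. The only place where the specific structure of the generalized EdeS equation enters (beyond the $\tau^{1-p}$ source weight and the amplitude $A_k$) is in the exact matching $n(1-k)(p-1)=2$ at $p=p_1(n,k)$, which is precisely what turns the double integral into a genuine logarithmic one and hence forces the iteration to be critical rather than subcritical or supercritical.
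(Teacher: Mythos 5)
Your proposal is essentially the paper's own proof of Theorem \ref{Theorem critical case p1}: the same functional $\mathrm{U}(t)=\int_{\mathbb{R}^n}u(t,x)\,\mathrm{d}x$, the same double-integral frame from $\psi\equiv 1$ plus H\"older on $B_{R+A_k(t)}$ (your matching condition $n(1-k)(p-1)=2$ is the paper's \eqref{condition p=p1 equiv}, which turns \eqref{differential ineq U''} into \eqref{iteration frame 2nd crit case}), the same baseline $\mathrm{U}(t)\gtrsim\varepsilon t$, the same slicing $\ell_j\uparrow 2$ with the recursion $\sigma_{j+1}=p\sigma_j+1$, and the same endgame via the $p^{-j}$-normalized logarithm of the constants. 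The one substantive difference is precisely the point you flag as your main obstacle: you iterate with the \emph{unshifted} weight $(\log t)^{b_j}$, so each inner integration leaves a subtraction $(\log\ell_j)^{pb_j+1}$ to be absorbed into $\tfrac12(\log s)^{pb_j+1}$, which with exponent growing like $p^j$ is uniform in $j$ only for $s$ bounded away from the slice (it degenerates as $t\downarrow\ell_{j+1}$, where the induction claim must still hold); worse, any fixed-factor estimate $\log s\geqslant\theta\log t$ in the outer integral loses $\theta^{pb_j+1}$, whose normalized logarithm $p^{-j}(pb_j+1)\log\theta$ does \emph{not} tend to zero summably. The paper dissolves both issues by proving the bounds \eqref{lower bound U j p=p1} with the \emph{shifted} weight $(\log(t/\ell_j))^{\sigma_j}$: the inner integral $\int_{\ell_j}^s\tau^{-1}\left(\log(\tau/\ell_j)\right)^{\sigma_j p}\mathrm{d}\tau$ is then computed exactly with vanishing boundary term, and restricting the outer integral to $s\in[\ell_j t/\ell_{j+1},t]$ gives $\log(s/\ell_j)\geqslant\log(t/\ell_{j+1})$ with \emph{no} loss in the base of the large power; the only losses are $1-\ell_j/\ell_{j+1}>2^{-(j+3)}$ and $(\sigma_j p+1)^{-1}$, i.e.\ $O(j)$ in $\log K_j$, whose summability against $p^{-j}$ is exactly the condition you ask for and follows from \eqref{identity sum (j-k)p^k}. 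Your unshifted bookkeeping can in fact be rescued (absorb uniformly for $s\geqslant 4$, since $2^{1/(pb_j+1)}\log\ell_j\leqslant\log 4$, and use the $j$-dependent multiplicative shrink $s\in[\ell_j t/\ell_{j+1},t]$ so that the base-of-logarithm loss is only $\exp\!\left(-c\,2^{-j}p\,b_j\right)$, summable after normalization for every $p>1$), but the shifted logarithm accomplishes all of this for free, which is why the paper adopts it.
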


The remaining part of the paper is organized as follows: in Section \ref{Section critical case p0} we prove Theorem \ref{Theorem critical case p0} by using the approach introduced in \cite{WakYor18}; then, in Section \ref{Section subcritical case} we provide a complete overview on upper bound estimates for the subcritical case (cf. Proposition \ref{Proposition lifespan subcrit}), while in Section \ref{Section critical case p1}  we show the proof of Theorem \ref{Theorem critical case p1}; finally, in Appendix \ref{Appendix solutions y''-lambda^2 t^(-4/3)y=0} we provide a different proof of Proposition \ref{Proposition representations y0 and y1} in the special case of Einstein -- de Sitter spacetime.

\section[Semilinear wave equation in EdeS spacetime: critical case $p=p_0(n,k)$]{Semilinear wave equation in EdeS spacetime: 1st critical case} \label{Section critical case p0}

Our goal is to prove a blow -- up result in the critical case $p=p_0(n,k)$, where $p_0(n,k)$ is the greatest root of the quadratic equation
\begin{align}\label{equation critical exponent general case}
\left(\tfrac{n-1}{2}+\tfrac{2-k}{2(1-k)}\right)p^2-
\left(\tfrac{n+1}{2}+\tfrac{2+3k}{2(1-k)}\right)p-1=0.
\end{align} 

The approach that we will follow is based on the technique introduced in \cite{WakYor18} and subsequently applied to different wave models (cf. \cite{WakYor18Damp,PalTak19,PalTak19mix,LinTu19,ChenPal19MGT,ChenPal19SWENM}).

We are going to introduce a time -- dependent functional that depends on a local in time solution to \eqref{Semi EdeS k} and to study its blow -- up dynamic. In particular, the blow -- up result will be obtained by applying the so -- called \emph{slicing procedure} in an iteration argument to show a sequence of lower bound estimates for the above mentioned functional. 

The section is organized as follows: in Section \ref{Subsection Aux functions} we determine a pair of auxiliary functions which have a fundamental role in the definition of the time -- dependent functional and in the determination of the iteration frame, while in Section \ref{Subsection estimates auxiliary functions} we establish some fundamental estimates for these functions; then, in Section \ref{Subsection iteration frame} we establish the iteration frame for the functional and, finally, in Section \ref{Subsection iteration procedure} we prove the blow -- up result by using an iteration procedure.

\subsection{Auxiliary functions} \label{Subsection Aux functions}  

In this section, we are going to introduce two auxiliary  functions (see $\xi_q$ and $\eta_q$ below) analogously to the corresponding functions introduced in \cite{WakYor18}, which represent, in turn, a generalization of the solution to the classical free wave equation given in \cite{Zhou07}. Those auxiliary functions are defined by using the remarkable function 
\begin{align}\label{def Yordanov-Zhang function}
\varphi (x) \doteq \begin{cases} \int_{\mathbb{S}^{n-1}} \mathrm{e}^{x\cdot \omega} \mathrm{d} \sigma_\omega & \mbox{if} \ n\geqslant 2, \\ \cosh x 
& \mbox{if} \ n=1 \end{cases} 
\end{align} introduced in \cite{YZ06}. Let us recall briefly the main properties of this function: $\varphi$ is a positive and smooth function that satisfies $\Delta \varphi =\varphi$ and asymptotically behaves like $|x|^{-\frac{n-1}{2}}\mathrm{e}^{|x|}$ as $|x|\to \infty$ up to a positive multiplicative constant.

In order to introduce the definition of the auxiliary functions, let us begin by determining the solutions $y_j=y_j(t,s;\lambda,k)$, $j\in\{0,1\}$, of the non-autonomous, parameter-dependent, ordinary Cauchy problems
\begin{align}\label{CP yj(t,s;lambda,k)} 
\begin{cases}  \partial_t^2 y_j(t,s;\lambda,k) - \lambda^2 t^{-2k} y_j(t,s;\lambda,k)= 0, &  t>s, \\
y_j(s,s;\lambda,k)= \delta_{0j}, \\
 \partial_t y_j(s,s;\lambda,k)= \delta_{1j},
\end{cases}
\end{align} where $\delta_{ij}$ denotes the Kronecker delta, $s\geqslant 1$ is the initial time and $\lambda>0$ is a real parameter. 

Let us recall that we denote by $\phi_k(t)= \tfrac{t^{1-k}}{1-k}$ a primitive of the speed of propagation $a(t) = t^{-k}$ for the wave equation in  \eqref{Semi EdeS k}. In order to find a system of independent solutions to 
\begin{align}\label{equation y}
\frac{\mathrm{d}^2 y}{\mathrm{d} t^2} -\lambda^2 t^{-2k}y=0
\end{align} we perform first a change of variables. Let $\tau= \tau(t;\lambda,k)\doteq  \lambda \phi_k(t)$. Since
\begin{align*}
\frac{\mathrm{d} y}{\mathrm{d} t} &= \lambda t^{-k} \frac{\mathrm{d} y}{\mathrm{d} \tau}, \qquad \frac{\mathrm{d}^2 y}{\mathrm{d} t^2} = \lambda^2 t^{-2k} \frac{\mathrm{d}^2 y}{\mathrm{d} \tau^2}-\lambda k t^{-k-1} \frac{\mathrm{d} y}{\mathrm{d} \tau},
\end{align*} then, $y$ solves \eqref{equation y} if and only if it solves
\begin{align}\label{equation y tau}
\tau \frac{\mathrm{d}^2 y}{\mathrm{d} \tau^2} -\frac{k}{1-k} \frac{\mathrm{d} y}{\mathrm{d} \tau}-\tau y=0.
\end{align} Next, we carry out the transformation $y(\tau)=\tau^\nu w(\tau)$ with $\nu\doteq \tfrac{1}{2(1-k)}$. Therefore, $y$ solves \eqref{equation y tau} if and only if $w$ solves the modified Bessel equation of order $\nu$
\begin{align}\label{Bessel equation w}
\tau^2\frac{\mathrm{d}^2 w}{\mathrm{d} \tau^2} +\tau\frac{\mathrm{d} w}{\mathrm{d} \tau}-\left(\nu^2+\tau^2 \right) w=0,
\end{align} where we applied the straightforward relations
\begin{align*}
\frac{\mathrm{d} y}{\mathrm{d} \tau}= \nu \tau^{\nu-1} w(\tau) +\tau^\nu \frac{\mathrm{d} w}{\mathrm{d} \tau} ,\qquad \frac{\mathrm{d}^2 y}{\mathrm{d} \tau^2} = \nu (\nu-1) \tau^{\nu-2} w+2\nu \tau^{\nu-1} \frac{\mathrm{d} w}{\mathrm{d} \tau}+\tau^\nu \frac{\mathrm{d}^2 w}{\mathrm{d} \tau^2}.
\end{align*} If  we employ  as independent solutions to \eqref{Bessel equation w} the modified Bessel function of first and second kind of order $\nu$, denoted, respectively, by $\mathrm{I}_\nu(\tau)$ and $\mathrm{K}_\nu(\tau)$, then, the pair of functions
\begin{align*}
V_0(t;\lambda,k) &\doteq \tau ^\nu \mathrm{I}_\nu (\tau) = (\lambda \phi_k(t))^\nu \mathrm{I}_\nu (\lambda \phi_k(t)), \\
V_1(t;\lambda,k) & \doteq \tau ^\nu \mathrm{K}_\nu (\tau)  = (\lambda \phi_k(t))^\nu \mathrm{K}_\nu (\lambda \phi_k(t))
\end{align*} is a basis of the space of solutions to \eqref{equation y}.

\begin{proposition} \label{Proposition representations y0 and y1} The functions
\begin{align}
y_0(t,s;\lambda,k) &\doteq \lambda \left(t/s\right)^{1/2}\phi_k(s)\big[\mathrm{I}_{\nu-1}(\lambda \phi_k (s))\, \mathrm{K}_{\nu}(\lambda \phi_k (t))+\mathrm{K}_{\nu-1}(\lambda \phi_k (s))\,\mathrm{I}_{\nu}(\lambda \phi_k (t))\big],  \label{def y0(t,s;lambda,k)} \\
y_1(t,s;\lambda,k) &\doteq  (1-k)^{-1} (st)^{1/2} \big[\mathrm{K}_{\nu}(\lambda \phi_k (s))\, \mathrm{I}_{\nu}(\lambda \phi_k (t))-\mathrm{I}_{\nu}(\lambda \phi_k (s))\,\mathrm{K}_{\nu}(\lambda \phi_k (t))\big],  \label{def y1(t,s;lambda,k)} 
\end{align} solve the Cauchy problems \eqref{CP yj(t,s;lambda,k)} for $j=0$ and $j=1$, respectively, where $\nu= 1/(2(1-k))$, $\phi_k(t)= t^{1-k}/(1-k)$ and $\mathrm{I}_\nu,\mathrm{K}_\nu$ denote the modified Bessel function of order $\nu$ of the first and second kind, respectively.
\end{proposition}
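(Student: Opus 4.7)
The plan is to verify separately that the proposed expressions \eqref{def y0(t,s;lambda,k)} and \eqref{def y1(t,s;lambda,k)} satisfy the ODE in \eqref{CP yj(t,s;lambda,k)} and match the prescribed Cauchy data at $t=s$. The first point is essentially immediate from the analysis that precedes the proposition. Since $\nu(1-k)=\tfrac{1}{2}$, one has $(\lambda\phi_k(t))^\nu = (\lambda/(1-k))^\nu\, t^{1/2}$, so $t^{1/2}\mathrm{I}_\nu(\lambda\phi_k(t))$ and $t^{1/2}\mathrm{K}_\nu(\lambda\phi_k(t))$ are constant multiples of $V_0(t;\lambda,k)$ and $V_1(t;\lambda,k)$, respectively. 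Viewing the right-hand sides of \eqref{def y0(t,s;lambda,k)} and \eqref{def y1(t,s;lambda,k)} as functions of $t$ with $s$ held fixed, both are linear combinations of $V_0$ and $V_1$ with coefficients depending only on $s$, hence they automatically solve $\partial_t^2 y-\lambda^2 t^{-2k}y=0$ for $t>s$.

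The substantive task is the verification of the Cauchy data at $t=s$. I would use the classical derivative identities
\begin{align*}
\tfrac{d}{dz}\mathrm{I}_\nu(z) &= \mathrm{I}_{\nu-1}(z)-\tfrac{\nu}{z}\mathrm{I}_\nu(z), & \tfrac{d}{dz}\mathrm{K}_\nu(z) &= -\mathrm{K}_{\nu-1}(z)-\tfrac{\nu}{z}\mathrm{K}_\nu(z),
\end{align*}
together with the Wronskian relation $\mathrm{I}_\nu(z)\mathrm{K}_{\nu-1}(z) + \mathrm{I}_{\nu-1}(z)\mathrm{K}_\nu(z) = 1/z$. Setting $z=\lambda\phi_k(s)$ and evaluating at $t=s$, the bracket in \eqref{def y0(t,s;lambda,k)} collapses by the Wronskian to $1/(\lambda\phi_k(s))$, which cancels the prefactor $\lambda\phi_k(s)$ and yields $y_0(s,s;\lambda,k)=1$; the bracket in \eqref{def y1(t,s;lambda,k)} is antisymmetric in its two arguments, so it vanishes at $t=s$ and $y_1(s,s;\lambda,k)=0$.

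For the first-order data I would differentiate in $t$ via the chain rule (using $\tfrac{d}{dt}(\lambda\phi_k(t))=\lambda t^{-k}$) and the identities above, then specialize to $t=s$. For $\partial_t y_1(s,s;\lambda,k)$, the term coming from differentiating the prefactor $(st)^{1/2}$ vanishes since the bracket is zero at $t=s$; what remains is a multiple of $\mathrm{K}_\nu(z)\mathrm{I}_\nu'(z)-\mathrm{I}_\nu(z)\mathrm{K}_\nu'(z)$, which equals $\mathrm{K}_\nu\mathrm{I}_{\nu-1}+\mathrm{I}_\nu\mathrm{K}_{\nu-1}=1/z$, and a direct computation shows that, after using $\phi_k(s)=s^{1-k}/(1-k)$ and the prefactor $(1-k)^{-1}$, this equals $1$. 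For $\partial_t y_0(s,s;\lambda,k)$, the contribution from differentiating $(t/s)^{1/2}$ evaluates via the Wronskian to $1/(2s)$, while the contribution from differentiating the bracket reduces, after the $\mathrm{I}_{\nu-1}\mathrm{K}_{\nu-1}$ cross-terms cancel, to $-(\nu/z)\cdot(1/z)$ multiplied by $\lambda^2\phi_k(s)\,s^{-k}$; this simplifies to $-\nu(1-k)/s = -1/(2s)$ because $\nu(1-k)=\tfrac{1}{2}$, and the two contributions cancel to yield $0$.

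The main obstacle is purely algebraic bookkeeping: one must carefully group the Bessel terms produced by the derivative identities so that the Wronskian appears and the correct powers of $s$ combine to give the $\pm 1/(2s)$ cancellation (for $\partial_t y_0$) or assemble to the desired unit value (for $\partial_t y_1$). Once these manipulations are arranged, both Cauchy problems in \eqref{CP yj(t,s;lambda,k)} are verified and the proposition follows.
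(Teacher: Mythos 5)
Your proposal is correct, and the key computations check out: the cross-product identity $\mathrm{I}_\nu(z)\mathrm{K}_{\nu-1}(z)+\mathrm{I}_{\nu-1}(z)\mathrm{K}_\nu(z)=1/z$ (which is the paper's Wronskian relation $\mathcal{W}(\mathrm{I}_\nu,\mathrm{K}_\nu)(z)=-1/z$ rewritten through the derivative recurrences) does collapse the brackets exactly as you claim, the $\mathrm{I}_{\nu-1}\mathrm{K}_{\nu-1}$ terms cancel in $\partial_t y_0$, and the two contributions $\tfrac{1}{2s}$ and $-\nu(1-k)/s=-\tfrac{1}{2s}$ annihilate each other since $\nu(1-k)=\tfrac12$. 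Your route is, however, genuinely different from the paper's. The paper proceeds constructively: it writes $y_j$ as an unknown linear combination \eqref{representation yj with aj and bj} of the fundamental system $V_0,V_1$, inverts the fundamental matrix \eqref{inverse matrix} after computing $\mathcal{W}(V_0,V_1)=-c_k^{-1}\lambda^{2\nu}$, and then simplifies the resulting coefficients with the same recurrences you invoke until the closed forms \eqref{def y0(t,s;lambda,k)}--\eqref{def y1(t,s;lambda,k)} emerge. You instead take the closed forms as given and verify the four Cauchy conditions directly, first observing (correctly, since $(\lambda\phi_k(t))^\nu$ is a constant multiple of $t^{1/2}$) that both expressions are $s$-dependent linear combinations of $V_0,V_1$ and therefore solve \eqref{equation y} automatically. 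For the proposition as stated, which only asserts that the displayed functions solve \eqref{CP yj(t,s;lambda,k)}, your verification is logically sufficient and somewhat shorter, avoiding the matrix-inversion bookkeeping; what the paper's derivation buys is an explanation of where the formulas come from, which matters if one must rediscover such representations (as in the Appendix, where the $k=2/3$ case is rederived via a confluent hypergeometric reduction). In substance both arguments rest on identical ingredients, so the difference is one of direction (derivation versus verification) rather than of method.
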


\begin{proof}
We have seen that $V_0,V_1$ form a system of independent solutions to \eqref{equation y}. Therefore, we may express the solutions of \eqref{CP yj(t,s;lambda,k)} as linear combinations of $V_0,V_1$ as follows:
\begin{align} \label{representation yj with aj and bj}
y_j(t,s;\lambda,k) = a_j(s;\lambda,k) V_0(t;\lambda,k)+  b_j(s;\lambda,k) V_1(t;\lambda,k)
\end{align} for suitable coefficients $a_j(s;\lambda,k), b_j(s;\lambda,k)$, $j\in\{0,1\}$. Using the initial conditions $\partial^i_t y_j(s,s;\lambda,k)=\delta_{ij}$, we find the system
\begin{align*}
\left(\begin{array}{cc}
V_0(s;\lambda,k) & V_1(s;\lambda,k)  \\ 
\partial_t V_0(s;\lambda,k) & \partial_t V_1(s;\lambda,k) 
\end{array} \right) \left(\begin{array}{cc}
a_0(s;\lambda,k) & a_1(s;\lambda,k)  \\ 
b_0(s;\lambda,k) & b_1(s;\lambda,k) 
\end{array} \right) = I,
\end{align*} where $I$ denotes the identity matrix. So, in  order to determine the coefficients in \eqref{representation yj with aj and bj}, we have to calculate explicitly the inverse matrix
\begin{align}\label{inverse matrix}
\left(\begin{array}{cc}
V_0(s;\lambda,k) & V_1(s;\lambda,k)  \\ 
\partial_t V_0(s;\lambda,k) & \partial_t V_1(s;\lambda,k) 
\end{array} \right)^{-1} = \left(\mathcal{W}(V_0,V_1)(s;\lambda,k)\right)^{-1}\left(\begin{array}{cc}
 \partial_t V_1(s;\lambda,k)  & -V_1(s;\lambda,k)  \\ 
-\partial_t V_0(s;\lambda,k) & V_0(s;\lambda,k) 
\end{array} \right),
\end{align} where $\mathcal{W}(V_0,V_1)$ is the Wronskian of $V_0,V_1$. Clearly, we need to express in a more suitable way $\mathcal{W}(V_0,V_1)$. Let us calculate the $t$ -- derivative of $V_0,V_1$. Recalling that $\phi_k(t)=t^{1-k}/(1-k)$ and $\nu=1/(2(1-k))$, it results
\begin{align*}
\partial_t V_0(t;\lambda,k) & =\nu (\lambda\phi_k(t))^{\nu-1} \lambda \phi_k'(t) \, \mathrm{I}_\nu(\lambda \phi_k(t))+ (\lambda\phi_k(t))^{\nu} \, \mathrm{I}_\nu'(\lambda \phi_k(t))  \lambda \phi_k'(t) \\
 & =\tfrac{1}{2t} (\lambda\phi_k(t))^{\nu} \, \mathrm{I}_\nu(\lambda \phi_k(t))+ (\lambda\phi_k(t))^{\nu} (\lambda \phi_k'(t)) \, \mathrm{I}_\nu'(\lambda \phi_k(t))  
\end{align*} and, analogously, 
\begin{align*}
\partial_t V_1(t;\lambda,k) & =\tfrac{1}{2t} (\lambda\phi_k(t))^{\nu} \, \mathrm{K}_\nu(\lambda \phi_k(t))+ (\lambda\phi_k(t))^{\nu} (\lambda \phi_k'(t)) \, \mathrm{K}_\nu'(\lambda \phi_k(t)) .
\end{align*} Consequently, we can express $\mathcal{W}(V_0,V_1)$ as follows:
\begin{align*}
\mathcal{W}(V_0,V_1)(t;\lambda,k) & = (\lambda \phi_k(t))^{2\nu} (\lambda \phi_k'(t)) \big[\mathrm{K}_\nu'(\lambda \phi_k(t))\, \mathrm{I}_\nu(\lambda \phi_k(t)) -\mathrm{I}_\nu'(\lambda \phi_k(t))\, \mathrm{K}_\nu(\lambda \phi_k(t)) \big] \\
& = (\lambda \phi_k(t))^{2\nu} (\lambda \phi_k'(t)) \mathcal{W}(\mathrm{I}_\nu,\mathrm{K}_\nu) (\lambda\phi_k(t)) = -(\lambda \phi_k(t))^{2\nu-1} (\lambda \phi_k'(t))\\
& = -\lambda^{2\nu}  (\phi_k(t))^{2\nu-1}  \phi_k'(t) = -c_k^{-1} \lambda^{2\nu},
\end{align*} where $c_k\doteq (1-k)^{k/(1-k)}$ and in the third equality we used the value of the Wronskian of $\mathrm{I}_\nu,\mathrm{K}_\nu$
\begin{align*}
\mathcal{W}(\mathrm{I}_\nu,\mathrm{K}_\nu) (z)= \mathrm{I}_\nu (z) \partial_z\mathrm{K}_\nu(z)- \partial_z \mathrm{I}_\nu (z)\mathrm{K}_\nu (z) =- \frac1z.
\end{align*} Let us underline that $\mathcal{W}(V_0,V_1)(t;\lambda,k)$ does not actually depend on $t$, due to the absence of the first order term in \eqref{equation y}.

 Plugging the previous representation of  $\mathcal{W}(V_0,V_1)$ in \eqref{inverse matrix}, we get
\begin{align*}
\left(\begin{array}{cc}
a_0(s;\lambda,k) & a_1(s;\lambda,k)  \\ 
b_0(s;\lambda,k) & b_1(s;\lambda,k) 
\end{array} \right) = -c_k \lambda^{-2\nu} \left(\begin{array}{cc}
 \partial_t V_1(s;\lambda,k)  & -V_1(s;\lambda,k)  \\ 
-\partial_t V_0(s;\lambda,k) & V_0(s;\lambda,k) 
\end{array} \right).
\end{align*}
Let us begin by proving \eqref{def y0(t,s;lambda,k)}. Using the above representation of $a_0(s;\lambda,k),b_0(s;\lambda,k)$ in \eqref{representation yj with aj and bj}, we obtain
\begin{align*}
y_0(t,s;\lambda,k) &= c_k \lambda^{-2\nu} \big\{\partial_tV_0(s;\lambda,k) V_1(t;\lambda,k)-\partial_tV_1(s;\lambda,k) V_0(t;\lambda,k)\big\} \\
& = c_k \lambda^{-2\nu}(\lambda\phi_k(s))^{\nu} (\lambda\phi_k(t))^{\nu}\Big\{ \big[\tfrac{1}{2s}\,  \mathrm{I}_\nu(\lambda \phi_k(s))+ (\lambda \phi_k'(s)) \, \mathrm{I}_\nu'(\lambda \phi_k(s))  \big] \mathrm{K}_\nu(\lambda \phi_k(t)) \\
& \qquad \phantom{ c_k \lambda^{-2\nu}(\phi_k(s))^{\nu} (\phi_k(t))^{\nu}\big\{ }- \big[\tfrac{1}{2s}\,  \mathrm{K}_\nu(\lambda \phi_k(s))+ (\lambda \phi_k'(s)) \, \mathrm{K}_\nu'(\lambda \phi_k(s))  \big] \mathrm{I}_\nu(\lambda \phi_k(t)) \Big\} \\
& = c_k (\phi_k(s)\phi_k(t))^{\nu} (2s)^{-1}  \big\{ \mathrm{I}_\nu(\lambda \phi_k(s))\,  \mathrm{K}_\nu(\lambda \phi_k(t)) -\mathrm{K}_\nu(\lambda \phi_k(s))\,  \mathrm{I}_\nu(\lambda \phi_k(t)) \big\} \\
& \quad + c_k \lambda  (\phi_k(s)\phi_k(t))^{\nu} \phi_k'(s) \big\{ \mathrm{I}'_\nu(\lambda \phi_k(s))\,  \mathrm{K}_\nu(\lambda \phi_k(t)) -\mathrm{K}'_\nu(\lambda \phi_k(s))\,  \mathrm{I}_\nu(\lambda \phi_k(t)) \big\}.
\end{align*}
Applying the recursive relations for the derivatives of the modified Bessel functions
\begin{align*}
\frac{\partial \,\mathrm{I}_\nu}{\partial z}(z) & = -\frac{ \nu}{z} \, \mathrm{I}_\nu(z)+ \mathrm{I}_{\nu-1}(z), \\
\frac{\partial  \,\mathrm{K}_\nu}{\partial z}(z) & = -\frac{ \nu}{z} \, \mathrm{K}_\nu(z)- \mathrm{K}_{\nu-1}(z),
\end{align*} to the last relation, we arrive at
\begin{align}
y_0(t,s;\lambda,k) &= c_k (\phi_k(s)\phi_k(t))^{\nu} \underbrace{ \left[ (2s)^{-1} -\tfrac{\nu \lambda \phi_k'(s)}{ \lambda \phi_k(s)} \right]}_{=0}\big\{ \mathrm{I}_\nu(\lambda \phi_k(s))\,  \mathrm{K}_\nu(\lambda \phi_k(t)) -\mathrm{K}_\nu(\lambda \phi_k(s))\,  \mathrm{I}_\nu(\lambda \phi_k(t)) \big\}\notag \\
& \quad + c_k \lambda  (\phi_k(s)\phi_k(t))^{\nu} \phi_k'(s) \big\{ \mathrm{I}_{\nu-1}(\lambda \phi_k(s))\,  \mathrm{K}_\nu(\lambda \phi_k(t)) +\mathrm{K}_{\nu-1}(\lambda \phi_k(s))\,  \mathrm{I}_\nu(\lambda \phi_k(t)) \big\} \notag \\
& = c_k \lambda  (\phi_k(s)\phi_k(t))^{\nu} \phi_k'(s) \big\{ \mathrm{I}_{\nu-1}(\lambda \phi_k(s))\,  \mathrm{K}_\nu(\lambda \phi_k(t)) +\mathrm{K}_{\nu-1}(\lambda \phi_k(s))\,  \mathrm{I}_\nu(\lambda \phi_k(t)) \big\}. \label{intermediate representation y0}
\end{align} Since $c_k   (\phi_k(s)\phi_k(t))^{\nu} \phi_k'(s) = (1-k)^{-1} (st)^{1/2} s^{-k} =(t/s)^{1/2} \phi_k(s)$,  \eqref{intermediate representation y0} yields immediately \eqref{def y0(t,s;lambda,k)}. Let us prove now the representation \eqref{def y1(t,s;lambda,k)}. Plugging the above determined expressions for $a_1(s;\lambda,k),b_1(s;\lambda,k)$ in \eqref{representation yj with aj and bj}, we have
\begin{align}
y_1(t,s;\lambda,k) &= c_k \lambda^{-2\nu} \big\{V_1(s;\lambda,k) V_0(t;\lambda,k)-V_0(s;\lambda,k) V_1(t;\lambda,k)\big\} \notag \\
& = c_k \lambda^{-2\nu}(\lambda\phi_k(s))^{\nu} (\lambda\phi_k(t))^{\nu}\big\{  \mathrm{K}_\nu(\lambda \phi_k(s))   \, \mathrm{I}_\nu(\lambda \phi_k(t)) -\mathrm{I}_\nu(\lambda \phi_k(s)) \, \mathrm{K}_\nu(\lambda \phi_k(t))\big\} \notag \\
& = c_k (\phi_k(s) \phi_k(t))^{\nu}\big\{  \mathrm{K}_\nu(\lambda \phi_k(s))   \, \mathrm{I}_\nu(\lambda \phi_k(t)) -\mathrm{I}_\nu(\lambda \phi_k(s)) \, \mathrm{K}_\nu(\lambda \phi_k(t))\big\} . \label{intermediate representation y1}
\end{align} Thus, using $c_k (\phi_k(s) \phi_k(t))^{\nu} =(st)^{1/2}/(1-k)$, from \eqref{intermediate representation y1} follows \eqref{def y1(t,s;lambda,k)}. This concludes the proof.
\end{proof}

\begin{remark}  In the special case $k=2/3$, $y_0(t,s;\lambda,k)$ and $y_1(t,s;\lambda,k)$ can be expressed in terms of elementary functions. Indeed by using the explicit representations
\begin{align*}
\mathrm{I}_{\frac{1}{2}}(z) & = \sqrt{\frac{2}{\pi}}  \, \frac{\sinh z}{z^{1/2}}, \quad \mathrm{I}_{\frac{3}{2}}(z) \ = \sqrt{\frac{2}{\pi}}  \, \frac{z\cosh z -\sinh z}{z^{3/2}}, \\  \mathrm{K}_{\frac{1}{2}}(z) & = \sqrt{\frac{\pi}{2}}  \, \frac{\mathrm{e}^{-z}}{z^{1/2}}, \quad \ \ \mathrm{K}_{\frac{3}{2}}(z) = \sqrt{\frac{\pi}{2}}  \, \frac{\mathrm{e}^{-z}(z+1)}{z^{3/2}},
\end{align*} we can derive the following representations
\begin{align}
y_0(t,s;\lambda,2/3) &= \left(\frac{t}{s}\right)^{1/3} \cosh\big( 3\lambda \big(t^{1/3}-s^{1/3}\big)\big) -\frac{1}{3\lambda s^{1/3}}  \sinh\big( 3\lambda \big(t^{1/3}-s^{1/3}\big)\big), \label{def y0(t,s;lambda,2/3)} \\
y_1(t,s;\lambda,2/3) &= \left[\frac{\left(st\right)^{1/3}}{\lambda} -\frac{1}{9\lambda^3}\right] \cosh\big( 3\lambda \big(t^{1/3}-s^{1/3}\big)\big) +\frac{1}{3\lambda^2}  \big(t^{1/3}-s^{1/3}\big) \sinh\big( 3\lambda \big(t^{1/3}-s^{1/3}\big)\big).  \label{def y1(t,s;lambda,2/3)} 
\end{align} Actually, in this case it is possible to derive the representations of $y_0(t,s;\lambda,2/3),y_1(t,s;\lambda,2/3)$ by reducing \eqref{equation y} to a confluent hypergeometric  equation instead of a modified Bessel equation. For a detailed proof see Appendix \ref{Appendix solutions y''-lambda^2 t^(-4/3)y=0}.
\end{remark}

\begin{lemma} Let $y_0$, $y_1$ be the functions defined in \eqref{def y0(t,s;lambda,k)} and \eqref{def y1(t,s;lambda,k)}, respectively. Then, the following identities are satisfied for any $t\geqslant s\geqslant 1$
\begin{align}
& \frac{\partial y_1}{\partial s}(t,s;\lambda,k)= -y_0(t,s;\lambda,k), \label{dy1/ds= -y0} \\
& \frac{\partial^2 y_1}{\partial s^2}(t,s;\lambda,k) -\lambda^2 s^{-2k}y_1(t,s;\lambda,k)= 0. \label{y1 adjoiunt equation} 
\end{align} 
\end{lemma}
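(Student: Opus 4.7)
The cleanest route is to return to the intermediate representation obtained during the proof of Proposition \ref{Proposition representations y0 and y1}, namely
\begin{align*}
y_0(t,s;\lambda,k) &= c_k \lambda^{-2\nu}\bigl[\partial_s V_0(s;\lambda,k)\, V_1(t;\lambda,k)-\partial_s V_1(s;\lambda,k)\, V_0(t;\lambda,k)\bigr], \\
y_1(t,s;\lambda,k) &= c_k \lambda^{-2\nu}\bigl[V_1(s;\lambda,k)\, V_0(t;\lambda,k)-V_0(s;\lambda,k)\, V_1(t;\lambda,k)\bigr],
\end{align*}
where $V_0,V_1$ are the basis of solutions to \eqref{equation y} introduced before the statement of the proposition. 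Both identities \eqref{dy1/ds= -y0} and \eqref{y1 adjoiunt equation} will fall out of this compact description essentially by inspection, avoiding any further direct manipulation of the modified Bessel functions.

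For \eqref{dy1/ds= -y0}, my plan is to differentiate the representation of $y_1$ above in the $s$-variable (with $t,\lambda,k$ fixed). One obtains
\begin{align*}
\partial_s y_1(t,s;\lambda,k) = c_k \lambda^{-2\nu}\bigl[\partial_s V_1(s;\lambda,k)\, V_0(t;\lambda,k)-\partial_s V_0(s;\lambda,k)\, V_1(t;\lambda,k)\bigr],
\end{align*}
which is precisely the negative of the formula displayed for $y_0$. The identity follows immediately.

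For \eqref{y1 adjoiunt equation}, I would exploit the key observation that, when $t$ is held fixed, the function $s\mapsto y_1(t,s;\lambda,k)$ is a linear combination of $V_0(s;\lambda,k)$ and $V_1(s;\lambda,k)$ with $t$-dependent (hence $s$-independent) coefficients. Since $V_0(\cdot;\lambda,k)$ and $V_1(\cdot;\lambda,k)$ were constructed in Section \ref{Subsection Aux functions} precisely as independent solutions of the equation $\partial_\tau^2 V-\lambda^2 \tau^{-2k}V=0$, any such linear combination solves the same ODE in its argument. Evaluating at $\tau=s$ yields \eqref{y1 adjoiunt equation}.

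There is no serious obstacle here: the whole content of the lemma is that the Cauchy problem \eqref{CP yj(t,s;lambda,k)} admits a Riemann-function-type structure, and this is automatic from the representation formula already proved. The only small point worth being careful about is to keep track of the fact that $V_0,V_1$ are viewed as functions of a single variable so that $\partial_s V_j(s;\lambda,k)$ unambiguously denotes the derivative of $V_j$ evaluated at the point $s$, as in the proof of Proposition \ref{Proposition representations y0 and y1}.
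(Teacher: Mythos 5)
Your proof is correct and takes essentially the same route as the paper: both arguments exploit the bilinear structure of $y_0,y_1$ as combinations of a fundamental system of solutions to \eqref{equation y}, obtaining \eqref{dy1/ds= -y0} by differentiating in $s$ and \eqref{y1 adjoiunt equation} from the fact that the $s$-dependence enters only through solutions of the ODE with $s$-independent coefficients. The sole cosmetic difference is the choice of basis: the paper works with the normalized pair $z_j(t;\lambda,k)\doteq y_j(t,1;\lambda,k)$, whose Wronskian is identically $1$, whereas you reuse the pair $V_0,V_1$ and the representation already established in the proof of Proposition \ref{Proposition representations y0 and y1}, which spares re-deriving the representation formulas.
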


\begin{remark} As the operator $(\partial_t^2 -\lambda^2t^{-2k})$ is formally self-adjoint, in particular \eqref{dy1/ds= -y0} and \eqref{y1 adjoiunt equation} tell us that $y_1$ solves also the adjoint problem to \eqref{equation y} with final conditions $(0,-1)$. 
\end{remark}

\begin{proof} Let us introduce the pair of independent solutions to \eqref{equation y}
\begin{align*}
 z_0(t;\lambda,k) & \doteq y_0(t,1;\lambda,k), \\
 z_1(t;\lambda,k) & \doteq y_1(t,1;\lambda,k).
\end{align*} By standard computations, we may show the representations
\begin{align*}
y_0(t,s;\lambda,k) & =  z_1'(s;\lambda,k)  z_0(t;\lambda,k)- z_0'(s;\lambda,k)  z_1(t;\lambda,k),\\
y_1(t,s;\lambda,k) & =  z_0(s;\lambda,k)  z_1(t;\lambda,k)- z_1(s;\lambda,k)  z_0(t;\lambda,k).
\end{align*} In particular, we used that the Wronskian of $z_0,z_1$ is identically 1. First we prove \eqref{dy1/ds= -y0}. Differentiating the second one of the previous representations with respect to $s$ and then using the first one, we get immediately
\begin{align*}
\frac{\partial y_1}{\partial s}(t,s;\lambda,k) & =  z_0'(s;\lambda,k)  z_1(t;\lambda,k)- z_1'(s;\lambda,k)  z_0(t;\lambda,k) = - y_0(t,s;\lambda,k) .
\end{align*} Since $z_0,z_1$ are solutions of \eqref{equation y}, then,
\begin{align*}
 \frac{\partial^2 y_1}{\partial s^2}(t,s;\lambda,k) & = z_0''(s;\lambda,k)  z_1(t;\lambda,k)- z_1''(s;\lambda,k)  z_0(t;\lambda,k) \\  & = \lambda^2 s^{-2k} z_0(s;\lambda,k)  z_1(t;\lambda,k)- \lambda^2 s^{-2k} z_1(s;\lambda,k)  z_0(t;\lambda,k) = \lambda^2 s^{-2k} y_1(t,s;\lambda,k) .
\end{align*}So, we prove \eqref{y1 adjoiunt equation} as well.
\end{proof}

\begin{proposition} \label{Proposition integral relation test function} Let $u_0\in H^1(\mathbb{R}^n)$ and $u_1\in L^2(\mathbb{R}^n)$ be functions such that $\mathrm{supp}\, u_j \subset B_R$ for $j=0,1$ and for some $R>0$ and let $\lambda>0$ be a parameter. Let $u$ be a local in time energy solution to \eqref{Semi EdeS k} on $[1,T)$ according to Definition \ref{Def energy sol}. Then, the following integral identity is satisfied for any $t\in [1,T)$
\begin{align} 
\int_{\mathbb{R}^n} u(t,x) \varphi_\lambda (x) \, \mathrm{d}x & = \varepsilon \, y_0(t,1;\lambda,k) \int_{\mathbb{R}^n} u_0(x) \varphi_\lambda(x) \, \mathrm{d}x + \varepsilon \, y_1(t,1;\lambda,k) \int_{\mathbb{R}^n} u_1(x) \varphi_\lambda(x) \, \mathrm{d}x \notag \\ & \quad +\int_1^t s^{1-p} y_1(t,s;\lambda,k)  \int_{\mathbb{R}^n} |u(s,x)|^p \varphi_\lambda (x) \, \mathrm{d}x \, \mathrm{d}s, \label{fundametal integral equality}
\end{align} where $\varphi_\lambda(x)\doteq \varphi(\lambda x)$ and $\varphi$ is defined by \eqref{def Yordanov-Zhang function}.
\end{proposition}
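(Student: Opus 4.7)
The plan is to derive \eqref{fundametal integral equality} by inserting a carefully chosen test function into the weak formulation \eqref{integral identity  weak sol}, after which both of the boundary contributions at the initial time and the bulk term coming from the wave operator collapse to the desired quantities.

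Concretely, I would take
\begin{align*}
\psi(s,x) \doteq \varphi_\lambda(x) \, y_1(t,s;\lambda,k),
\end{align*}
with $t\in[1,T)$ fixed and $s\in[1,t]$ the running time variable. A direct computation from $\Delta\varphi=\varphi$ gives $\Delta\varphi_\lambda = \lambda^2 \varphi_\lambda$, so combining this with \eqref{y1 adjoiunt equation} we obtain
\begin{align*}
\psi_{ss}(s,x) - s^{-2k}\Delta \psi(s,x) = \varphi_\lambda(x)\bigl(\partial_s^2 y_1 - \lambda^2 s^{-2k} y_1\bigr)(t,s;\lambda,k) = 0.
\end{align*}
Hence the entire $\int_1^t\!\!\int_{\mathbb{R}^n} u (\psi_{ss}-s^{-2k}\Delta\psi)\,\mathrm{d}x\,\mathrm{d}s$ term in \eqref{integral identity weak sol} vanishes.

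Next I would evaluate the remaining boundary terms. From \eqref{def y1(t,s;lambda,k)} we read off $y_1(t,t;\lambda,k)=0$, and from \eqref{dy1/ds= -y0} combined with $y_0(t,t;\lambda,k)=1$ (the initial condition in \eqref{CP yj(t,s;lambda,k)}) we get $\psi_s(t,x) = -\varphi_\lambda(x)$. At $s=1$ we simply have $\psi(1,x)=\varphi_\lambda(x)\,y_1(t,1;\lambda,k)$ and $\psi_s(1,x)=-\varphi_\lambda(x)\,y_0(t,1;\lambda,k)$. Plugging these into \eqref{integral identity weak sol} yields
\begin{align*}
\int_{\mathbb{R}^n} u(t,x)\varphi_\lambda(x)\,\mathrm{d}x
- \varepsilon\,y_1(t,1;\lambda,k)\!\!\int_{\mathbb{R}^n}\!\!u_1(x)\varphi_\lambda(x)\,\mathrm{d}x
- \varepsilon\,y_0(t,1;\lambda,k)\!\!\int_{\mathbb{R}^n}\!\!u_0(x)\varphi_\lambda(x)\,\mathrm{d}x
= \int_1^t\!\!\int_{\mathbb{R}^n}\!\! s^{1-p}|u|^p\varphi_\lambda\,y_1(t,s;\lambda,k)\,\mathrm{d}x\,\mathrm{d}s,
\end{align*}
which rearranges to \eqref{fundametal integral equality}.

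The main technical delicacy is that $\psi$ is \emph{not} compactly supported in $x$, because $\varphi_\lambda$ grows exponentially, so a priori Definition \ref{Def energy sol} does not permit it as a test function. I would remove this obstacle by appealing to Remark \ref{Remark support}: since $u_0,u_1$ are supported in $B_R$, finite speed of propagation gives $\mathrm{supp}\, u(s,\cdot)\subset B_{A_k(s)+R}$ for every $s\in[1,t]$, so all spatial integrals appearing in the computation are in fact carried out over a bounded ball and remain finite. Rigorously one inserts a spatial cutoff $\chi_j(x)$ equal to $1$ on $B_{A_k(t)+R+j}$, applies the weak formulation with the compactly supported test function $\chi_j(x)\varphi_\lambda(x)\,y_1(t,s;\lambda,k)$, and lets $j\to\infty$: the support condition on $u$ makes the cutoff irrelevant once $j$ is large enough, so the limit identity is exactly the one above.
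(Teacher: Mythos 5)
Your proof is correct and takes essentially the same route as the paper: the author likewise inserts the test function $\psi(s,x)=y_1(t,s;\lambda,k)\varphi_\lambda(x)$ into \eqref{integral identity  weak sol}, uses \eqref{dy1/ds= -y0}, \eqref{y1 adjoiunt equation} and $\Delta\varphi_\lambda=\lambda^2\varphi_\lambda$ to annihilate the bulk term and evaluate the boundary values at $s=t$ and $s=1$, and justifies the non-compactly supported test function via Remark \ref{Remark support}. Your explicit cutoff argument with $\chi_j$ is merely a more detailed spelling-out of that remark, so the two proofs are substantively identical.
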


\begin{proof} Since we assumed $u_0,u_1$ compactly supported, we may consider a test function $\psi\in \mathcal{C}^\infty([1,T)\times\mathbb{R}^n)$ in Definition \ref{Def energy sol} according to Remark \ref{Remark support}. Therefore, we consider $\psi(s,x)=y_1(t,s;\lambda,k)\varphi_\lambda(x)$ (here $t,\lambda$ can be considered fixed parameters). Hence, $\psi$ satisfies
\begin{align*}
\psi(t,x) &=y_1(t,t;\lambda,k) \varphi_\lambda(x)=0, \quad \psi_s(t,x) = \partial_s y_1(t,t;\lambda,k) \varphi_\lambda(x) =-  y_0(t,t;\lambda,k) \varphi_\lambda(x) =- \varphi_\lambda(x), \\
\psi(1,x) &=y_1(t,1;\lambda,k) \varphi_\lambda(x), \phantom{=0} \quad \psi_s(1,x) =  \partial_s y_1(t,1;\lambda,k) \varphi_\lambda(x) =-  y_0(t,1;\lambda,k) \varphi_\lambda(x), 
\end{align*}  and
\begin{align*}
\psi_{ss}(s,x) -s^{-2k} \Delta \psi(s,x) =\left( \partial_s^2 y_1(t,s;\lambda,k) -\lambda^2 s^{-2k} y_1(t,s;\lambda,k) \right) \varphi_\lambda(x)=0,
\end{align*}
where we used \eqref{dy1/ds= -y0}, \eqref{y1 adjoiunt equation} and the property $\Delta \varphi=\varphi$. 

Hence, employing this $\psi$ in \eqref{integral identity  weak sol}, we find immediately \eqref{fundametal integral equality}.
\end{proof}

\begin{proposition} \label{Proposition lower bound estimates y0 and y1} Let $y_0$, $y_1$ be the functions defined in \eqref{def y0(t,s;lambda,k)} and \eqref{def y1(t,s;lambda,k)}, respectively. Then, the following estimates are satisfied for any $t\geqslant s\geqslant 1$
\begin{align}
& y_0(t,s;\lambda,k)\geqslant  \cosh \big(\lambda (\phi_k(t)-\phi_k(s))\big) , \label{lower bound estimate y0(t,s;lambda,k)} \\
& y_1(t,s;\lambda,k)\geqslant   (st)^{\frac{k}{2}} \frac{\sinh \big(\lambda (\phi_k(t)-\phi_k(s))\big) }{\lambda}.\label{lower bound estimate y_1(t,s;lambda,k)}
\end{align} 
\end{proposition}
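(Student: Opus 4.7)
The plan is to prove both lower bounds by an ODE comparison argument applied directly to the defining Cauchy problems \eqref{CP yj(t,s;lambda,k)}, rather than by manipulating the Bessel representations \eqref{def y0(t,s;lambda,k)}--\eqref{def y1(t,s;lambda,k)}. Writing $L_k := \partial_t^2 - \lambda^2 t^{-2k}$, I would introduce the candidate lower-bound functions
\[
H(t) := \cosh\bigl(\lambda(\phi_k(t)-\phi_k(s))\bigr), \qquad F(t) := (st)^{k/2}\,\frac{\sinh\bigl(\lambda(\phi_k(t)-\phi_k(s))\bigr)}{\lambda},
\]
chosen so that $(H(s),H'(s))=(1,0)$ matches the data of $y_0$ at $t=s$ and $(F(s),F'(s))=(0,1)$ matches that of $y_1$, and show that both are sub-solutions of $L_k$ on $[s,\infty)$. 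The inequalities would then follow from Duhamel's principle for $L_k$ with $y_1$ as fundamental kernel.

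A direct differentiation using $\phi_k'(t)=t^{-k}$ yields
\begin{align*}
L_k H(t) &= -k\lambda\,t^{-k-1}\sinh\bigl(\lambda(\phi_k(t)-\phi_k(s))\bigr), \\
L_k F(t) &= \frac{k(k-2)}{4t^{2}}\,F(t).
\end{align*}
For $k\in[0,1)$ and $t\geqslant s\geqslant 1$ both right-hand sides are non-positive, since $\phi_k(t)\geqslant\phi_k(s)$ forces $\sinh(\lambda(\phi_k(t)-\phi_k(s)))\geqslant 0$ and $F\geqslant 0$. The weight $(st)^{k/2}$ in $F$ is tuned precisely so that the intermediate cross terms cancel when one differentiates $t^{k/2}\sinh(\lambda(\phi_k(t)-\phi_k(s)))/\lambda$ twice, leaving only the non-positive residual $\frac{k(k-2)}{4t^2}F$; a naive ansatz without this weight would not be a sub-solution of $L_k$.

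Next I would establish the positivity $y_1(t,s;\lambda,k)>0$ for $t>s\geqslant 1$. On any maximal interval $[s,t_0]$ where $y_1\geqslant 0$, the equation $\partial_t^2 y_1 = \lambda^2 t^{-2k}y_1\geqslant 0$ forces $\partial_t y_1$ to be non-decreasing and hence $\geqslant \partial_t y_1(s,s;\lambda,k)=1$, so $y_1(t_0,s;\lambda,k)\geqslant t_0-s>0$, excluding a first zero after $s$.

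Finally, the differences $g_0 := y_0 - H$ and $g_1 := y_1 - F$ satisfy $g_j(s)=\partial_t g_j(s)=0$ together with $L_k g_j = -L_k H\geqslant 0$ (respectively $-L_k F\geqslant 0$). Duhamel's formula for $L_k$ gives
\[
g_j(t) = \int_s^t y_1(t,\sigma;\lambda,k)\,(L_k g_j)(\sigma)\,\mathrm{d}\sigma \geqslant 0,
\]
since both the kernel $y_1$ and the forcing $L_k g_j$ are non-negative. This proves \eqref{lower bound estimate y0(t,s;lambda,k)} and \eqref{lower bound estimate y_1(t,s;lambda,k)}. The main obstacle is spotting the right comparison functions: once the weight $(st)^{k/2}$ is in place, everything reduces to two routine differentiations and the standard positivity argument for the fundamental solution.
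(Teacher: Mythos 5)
Your proof is correct, and its substance coincides with the paper's: you use exactly the same comparison functions $w_0(t)=\cosh\bigl(\lambda(\phi_k(t)-\phi_k(s))\bigr)$ and $w_1(t)=(st)^{k/2}\lambda^{-1}\sinh\bigl(\lambda(\phi_k(t)-\phi_k(s))\bigr)$, the same matching of the Cauchy data of $y_0$ and $y_1$ at $t=s$, and the same subsolution computations --- your identity $L_kF=\tfrac{k(k-2)}{4t^2}F$ is just the sharp form of the paper's estimate $\partial_t^2 w_1\leqslant \lambda^2 t^{-2k}w_1$, with the cross terms cancelling thanks to the weight $t^{k/2}$ exactly as you observe, and $L_kH=-k\lambda t^{-k-1}\sinh\bigl(\lambda(\phi_k(t)-\phi_k(s))\bigr)\leqslant 0$ for $t\geqslant s$ is the analogous fact for $y_0$. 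Where you genuinely diverge is the mechanism converting ``supersolution with vanishing data'' into nonnegativity of $y_j-w_j$. The paper proves a minimum principle by perturbing the initial value (considering $w_\epsilon$ with $w_\epsilon(s)=\epsilon>0$), deducing $w_\epsilon>0$ from the Picard integral equation by contradiction, and then letting $\epsilon\to 0$ via continuous dependence on the data. You instead represent $g_j=y_j-w_j$ through Duhamel's formula with kernel $y_1(t,\sigma;\lambda,k)$ and reduce everything to the strict positivity of $y_1$, which your convexity/first-zero argument establishes correctly: on $[s,t_0]$ with $y_1\geqslant 0$ the equation gives $\partial_t^2 y_1\geqslant 0$, hence $\partial_t y_1\geqslant 1$ and $y_1(t_0)\geqslant t_0-s>0$, excluding a first zero (and the argument applies verbatim with any initial time $\sigma\in[s,t]$, as the Duhamel integral requires). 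Both routes are sound; yours buys an explicit nonnegative integral representation of the difference and avoids the $\epsilon$-limiting argument, at the price of the extra (easy) positivity lemma for $y_1$, which the paper's minimum principle never needs since it does not invoke the kernel at all.
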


\begin{proof} The proof of the inequalities \eqref{lower bound estimate y0(t,s;lambda,k)} and \eqref{lower bound estimate y_1(t,s;lambda,k)} is based on the following minimum type principle: \\
\emph{ let $w=w(t,s;\lambda,k)$ be a solution of the Cauchy problem }
\begin{align}\label{CP y}
\begin{cases} \partial_t^2 w -\lambda^2 t^{-2k} w =h, &  \mbox{for} \ t>s \geqslant 1, \\  w(s)=
w_0, \ \partial_t w(s)=w_1, \end{cases}
\end{align} \emph{where $h=h(t,s;\lambda,k)$ is a continuous function; if $h\geqslant 0$ and $w_0=w_1=0$ (i.e. $w$ is a \emph{supersolution} of the homogeneous problem with trivial initial conditions), then, $w(t,s;\lambda,k)\geqslant 0$ for any $t>s$}.

 In order to prove this minimum principle, we apply the continuous dependence on initial conditions (note that for $t\geqslant 1$ the function $t^{-2k}$ is smooth). Indeed, if we denote by $w_\epsilon$ the solution to \eqref{CP y} with $w_0=\epsilon>0$ and $w_1=0$, then, $w_\epsilon$ solves the integral equation
\begin{align*}
w_\epsilon(t,s;\lambda,k) = \epsilon +\int_s^t\int_s^\tau \big(\lambda^2\sigma^{-2k} w_\epsilon(\sigma,s;\lambda,k)+h(\sigma,s;\lambda,k)\big) \mathrm{d}\sigma\, \mathrm{d}\tau.
\end{align*} By contradiction, one can prove easily that $w_\epsilon(t,s;\lambda,k)>0$ for any $t>s$. Hence, by the continuous dependence on initial data, letting $\epsilon\to 0$, we find that  $w(t,s;\lambda,k)\geqslant 0$ for any $t>s$.

 Note that if $w_0,w_1\geqslant 0$ and $w_0+w_1>0$, then, the positivity of $w$ follows straightforwardly from the corresponding integral equation via a contradiction argument, rather than working with the family $\{w_\epsilon\}_{\epsilon>0}$. Nevertheless, in what follows we consider exactly the limit case $w_0=w_1=0$, for this reason the previous digression was necessary.

Let us prove the validity of \eqref{lower bound estimate y_1(t,s;lambda,k)}. We denote by $w_1=w_1(t,s;\lambda,k)$ the function on the right -- hand side of \eqref{lower bound estimate y_1(t,s;lambda,k)}. It is easy to see that $w_1(s,s;\lambda,k)=0$ and $\partial_t w_1(s,s;\lambda,k)=1$. Moreover,
\begin{align*}
\partial_t^2 w_1(t,s;\lambda,k) &= \lambda^{-1} s^{\frac{k}{2}} \Big[ \tfrac k2 \left(\tfrac k2 -1\right) t^{\frac{k}{2}-2} \sinh \big(\lambda (\phi_k(t)-\phi_k(s))\big)+ k t^{\frac k2 -1 } \cosh \big(\lambda (\phi_k(t)-\phi_k(s))\big) \lambda \phi'_k(t) \\ 
  & \qquad  \phantom{\lambda^{-1} s^{\frac{k}{2}} \Big[}  +t^{\frac{k}{2}} \sinh \big(\lambda (\phi_k(t)-\phi_k(s))\big) (\lambda \phi'_k(t))^2+t^{\frac{k}{2}} \cosh \big(\lambda (\phi_k(t)-\phi_k(s))\big) \lambda \phi''_k(t)  \Big] \\
& = \lambda^{-1} s^{\frac{k}{2}} \Big[ \tfrac k2 \left(\tfrac k2 -1\right) t^{\frac{k}{2}-2} \sinh \big(\lambda (\phi_k(t)-\phi_k(s))\big) +t^{\frac{k}{2}} \sinh \big(\lambda (\phi_k(t)-\phi_k(s))\big) (\lambda t^{-k})^2 \Big] \\
& \leqslant \lambda^{-1} (s t)^{\frac{k}{2}} \sinh \big(\lambda (\phi_k(t)-\phi_k(s))\big) (\lambda t^{-k})^2 = \lambda^2 t^{-2k} w_1(t,s;\lambda,k). 
\end{align*}
Therefore, $y_1-w_1$ is a supersolution of \eqref{CP y} with $h=0$ and $w_0=w_1=0$. Thus, applying the minimum principle we have that $(y_1-w_1)(t,s;\lambda,k)\geqslant 0$  for any $t>s$, that is, we showed \eqref{lower bound estimate y_1(t,s;lambda,k)}.

In a completely analogous way, one can prove \eqref{lower bound estimate y0(t,s;lambda,k)}, repeating the previous argument based on the minimum principle with  $w_0(t,s;\lambda,k)\doteq  \cosh \big(\lambda (\phi_k(t)-\phi_k(s))\big)$ in place of $w_1(t,s;\lambda,k)$ and $y_0$ in place of $y_1$, respectively.
\end{proof}

After the preliminary results that we have proved so far in this section, we can now introduce the definition of the following \emph{auxiliary function}
\begin{align}
\xi_q(t,s,x;k) &\doteq \int_0^{\lambda_0} \mathrm{e}^{-\lambda (A_k(t)+R)} \cosh \big(\lambda (\phi_k(t)-\phi_k(s))\big) \varphi_\lambda(x) \lambda^q \,\mathrm{d}\lambda \label{def xi q}, \\
\eta_q(t,s,x;k) & \doteq (st)^{k/2}\int_0^{\lambda_0} \mathrm{e}^{-\lambda (A_k(t)+R)} \frac{\sinh \big(\lambda (\phi_k(t)-\phi_k(s))\big) }{\lambda  (\phi_k(t)-\phi_k(s)) }\, \varphi_\lambda(x) \lambda^q \,\mathrm{d}\lambda\label{def eta q},
\end{align} where $q>-1$, $\lambda_0>0$ is a fixed parameter and $A_k$ is defined by \eqref{def A k}.

\begin{remark} For $k=0$ the functions $\xi_q$ and $\eta_q$ coincide with the corresponding ones given in \cite{WakYor18}, provided of course that we shift the initial time in the Cauchy problem from 0 to 1.
\end{remark}

Combining the results from Propositions \ref{Proposition integral relation test function} and \ref{Proposition lower bound estimates y0 and y1}, we may finally derive a fundamental inequality, whose role will be crucial in the next sections in order to prove the blow -- up result.

\begin{corollary} \label{Corollary fund ineq}  Let $u_0\in H^1(\mathbb{R}^n)$ and $u_1\in L^2(\mathbb{R}^n)$ such that $\mathrm{supp}\, u_j \subset B_R$ for $j=0,1$ and for some $R>0$. Let $u$ be a local in time energy solution to \eqref{Semi EdeS k} on $[1,T)$ according to Definition \ref{Def energy sol}. Let $q>-1$ and let $\xi_q(t,s,x;k),\eta_q(t,s,x;k)$ be the functions defined by \eqref{def xi q} and \eqref{def eta q}, respectively. Then,
\begin{align}
\int_{\mathbb{R}^n} u(t,x) \, \xi_q(t,t,x;k) \, \mathrm{d}x & \geqslant \varepsilon \int_{\mathbb{R}^n} u_0(x) \,  \xi_q(t,1,x;k)  \, \mathrm{d}x 
+ \varepsilon \,  (\phi_k(t)-\phi_k(1)) \int_{\mathbb{R}^n} u_1(x) \, \eta_q(t,s,x;k)  \, \mathrm{d}x \notag \\ & \quad +\int_1^t  (\phi_k(t)-\phi_k(s)) \, s^{1-p}  \int_{\mathbb{R}^n} |u(s,x)|^p \eta_q(t,s,x;k) \, \mathrm{d}x \, \mathrm{d}s \label{fundamental inequality functional mathcalU}
\end{align} for any $t\in [1,T)$.
\end{corollary}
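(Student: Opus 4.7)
The plan is to take the pointwise (in $\lambda$) integral identity provided by Proposition~\ref{Proposition integral relation test function}, multiply both sides by the nonnegative weight $\mathrm{e}^{-\lambda(A_k(t)+R)}\lambda^q$, integrate in $\lambda$ over $(0,\lambda_0)$, and then invoke the lower bounds from Proposition~\ref{Proposition lower bound estimates y0 and y1} to turn the resulting equality into the desired inequality. The identification with $\xi_q$ and $\eta_q$ is then simply a matter of unwinding their definitions \eqref{def xi q}--\eqref{def eta q}.

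More concretely, after multiplying \eqref{fundametal integral equality} by $\mathrm{e}^{-\lambda(A_k(t)+R)}\lambda^q$ and integrating in $\lambda\in(0,\lambda_0)$, Fubini--Tonelli (which is justified because $u(t,\cdot)$, $u_0$, $u_1$ are compactly supported at each time level and $\varphi_\lambda$ is continuous, so every integrand is absolutely integrable on the product measure space) allows me to swap the $\lambda$-integral with the $x$-integrals. On the left-hand side $\cosh(0)=1$ gives exactly $\int_{\mathbb{R}^n} u(t,x)\,\xi_q(t,t,x;k)\,\mathrm{d}x$. On the right-hand side, since $\varphi_\lambda>0$ and (in the setting where this corollary is applied, namely that of Theorems~\ref{Theorem critical case p0} and \ref{Theorem critical case p1}) $u_0,u_1\geqslant 0$, the three quantities $\int u_0\varphi_\lambda\,\mathrm{d}x$, $\int u_1\varphi_\lambda\,\mathrm{d}x$ and $\int |u(s,\cdot)|^p\varphi_\lambda\,\mathrm{d}x$ are all nonnegative, so the bounds
\[
y_0(t,1;\lambda,k)\geqslant\cosh\bigl(\lambda(\phi_k(t)-\phi_k(1))\bigr),\qquad y_1(t,s;\lambda,k)\geqslant (st)^{k/2}\,\frac{\sinh\bigl(\lambda(\phi_k(t)-\phi_k(s))\bigr)}{\lambda}
\]
can be substituted preserving the direction of the inequality.

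To match the definition \eqref{def eta q} of $\eta_q$, I rewrite the right-hand estimate on $y_1$ as
\[
y_1(t,s;\lambda,k)\geqslant (st)^{k/2}\,(\phi_k(t)-\phi_k(s))\,\frac{\sinh\bigl(\lambda(\phi_k(t)-\phi_k(s))\bigr)}{\lambda(\phi_k(t)-\phi_k(s))},
\]
which, once inserted in the $u_1$ term (with $s=1$) and in the $|u|^p$ integral (with generic $s\in(1,t)$), produces the factor $(\phi_k(t)-\phi_k(s))$ outside the $\lambda$-integral (note that $(st)^{k/2}$ is the prefactor pulled outside the $\lambda$-integral in \eqref{def eta q}). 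After swapping $\lambda$ and $x$ integrations once more, the $u_0$ term rebuilds $\xi_q(t,1,x;k)$ while the $u_1$ and nonlinear terms rebuild $(\phi_k(t)-\phi_k(1))\,\eta_q(t,1,x;k)$ and $(\phi_k(t)-\phi_k(s))\,\eta_q(t,s,x;k)$, respectively, which is exactly \eqref{fundamental inequality functional mathcalU}.

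I do not expect any serious technical obstacle: the argument is essentially a weighted averaging over the spectral parameter $\lambda$ combined with the pointwise lower bounds for $y_0$ and $y_1$. The only points demanding some care are the justification of Fubini (straightforward from the compact support of $u(t,\cdot)$ in $B_{R+A_k(t)}$), the correct placement of the prefactor $(st)^{k/2}$, and the implicit use of the sign hypothesis $u_0,u_1\geqslant 0$ inherited from the theorems in which the corollary is applied.
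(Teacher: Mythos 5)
Your proposal is correct and follows essentially the same route as the paper: the paper likewise combines the integral identity \eqref{fundametal integral equality} with the pointwise lower bounds \eqref{lower bound estimate y0(t,s;lambda,k)}--\eqref{lower bound estimate y_1(t,s;lambda,k)}, then multiplies by $\mathrm{e}^{-\lambda(A_k(t)+R)}\lambda^q$, integrates over $[0,\lambda_0]$, and applies Fubini's theorem. Your explicit remark that the substitution of the lower bounds requires the nonnegativity of $\int u_0\varphi_\lambda\,\mathrm{d}x$ and $\int u_1\varphi_\lambda\,\mathrm{d}x$ (inherited from the sign assumptions in the main theorems) is a point the paper leaves implicit, and your placement of the factors $(st)^{k/2}$ and $(\phi_k(t)-\phi_k(s))$ matches the definitions \eqref{def xi q}--\eqref{def eta q} exactly.
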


\begin{proof}
Combining \eqref{fundametal integral equality} and the lower bound estimates \eqref{lower bound estimate y0(t,s;lambda,k)}, \eqref{lower bound estimate y_1(t,s;lambda,k)}, we find
\begin{align*} 
\int_{\mathbb{R}^n} u(t,x) \varphi_\lambda (x) \, \mathrm{d}x & \geqslant \varepsilon \, \cosh \big(\lambda (\phi_k(t)-\phi_k(1))\big) \int_{\mathbb{R}^n} u_0(x) \varphi_\lambda(x) \, \mathrm{d}x \\ 
& \quad + \varepsilon \, t^{\frac{k}{2}} \lambda^{-1} \sinh \big(\lambda (\phi_k(t)-\phi_k(1))\big) \int_{\mathbb{R}^n} u_1(x) \varphi_\lambda(x) \, \mathrm{d}x  \\ & \quad +\int_1^t s^{1-p} (st)^{\frac{k}{2}} \lambda^{-1} \sinh \big(\lambda (\phi_k(t)-\phi_k(s))\big)  \int_{\mathbb{R}^n} |u(s,x)|^p \varphi_\lambda (x) \, \mathrm{d}x \, \mathrm{d}s. 
\end{align*} Multiplying both sides of the previous identity by $\mathrm{e}^{-\lambda (A_k(t)+R)}\lambda^q$, integrating with respect to $\lambda$ over $[0,\lambda_0]$ and applying Fubini's theorem, we get \eqref{fundamental inequality functional mathcalU}.
\end{proof}

\subsection{Properties of the auxiliary functions} \label{Subsection estimates auxiliary functions}

In this section, we determine some lower and upper bound estimates for the auxiliary functions $\xi_q,\eta_q$ under suitable assumptions on $q$.

Let us begin with the lower bound estimates
\begin{lemma} \label{Lemma lower bound estimates xi q and eta q} Let $n\geqslant 1$ and $\lambda_0>0$. If we assume  $q>-1$, then, for $t\geqslant s\geqslant 1$ and $|x|\leqslant A_k(s) +R$ the following lower bound estimates hold:
\begin{align}
\xi_q (t,s,x;k) &\geqslant B_0 \langle A_k(s)\rangle ^{-q-1}; \label{lower bound xi q}\\
\eta_q (t,s,x;k) & \geqslant B_1 (st)^{\frac{k}{2}} \langle A_k(t)\rangle ^{-1}\langle A_k(s)\rangle ^{-q}. \label{lower bound eta q}
\end{align}
Here $B_0,B_1$ are positive constants depending only on $\lambda_0,q,R,k$ and we employ the notation $\langle y\rangle \doteq 3+|s|$.
\end{lemma}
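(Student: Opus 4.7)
The plan is to reduce both integrals to an explicit one-dimensional model via pointwise bounds on the hyperbolic factors and on $\varphi$, then to rescale by $\mu = \lambda\langle A_k(s)\rangle$ and read off the polynomial decay from the behaviour of the rescaled integrand on a fixed, $(s,t,x)$-independent window of $\mu$.

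First, I would combine the algebraic identity $-\lambda(A_k(t)+R)+\lambda(\phi_k(t)-\phi_k(s))=-\lambda(A_k(s)+R)$, which follows from $\phi_k(t)-\phi_k(s)=A_k(t)-A_k(s)$, with $\cosh z\geqslant \tfrac{1}{2}e^z$ and with the elementary inequality $\sinh(z)/z\geqslant e^z/(2(1+z))$ for $z\geqslant 0$ (itself equivalent to $e^{2z}\geqslant 1+z$), to obtain the pointwise lower bounds
\begin{align*}
e^{-\lambda(A_k(t)+R)}\cosh\bigl(\lambda(\phi_k(t)-\phi_k(s))\bigr) &\geqslant \tfrac{1}{2}\,e^{-\lambda(A_k(s)+R)},\\
e^{-\lambda(A_k(t)+R)}\,\frac{\sinh\bigl(\lambda(\phi_k(t)-\phi_k(s))\bigr)}{\lambda(\phi_k(t)-\phi_k(s))} &\geqslant \frac{e^{-\lambda(A_k(s)+R)}}{2\bigl(1+\lambda(A_k(t)-A_k(s))\bigr)}.
\end{align*}
For the factor $\varphi_\lambda(x)$ I would simply invoke Jensen's inequality applied to \eqref{def Yordanov-Zhang function} to obtain the crude but uniform bound $\varphi_\lambda(x)\geqslant |\mathbb{S}^{n-1}|>0$. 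The substitution $\mu=\lambda\langle A_k(s)\rangle$ then produces the prefactor $\langle A_k(s)\rangle^{-q-1}$ out of $\lambda^q\,d\lambda$; since $|x|\leqslant A_k(s)+R$ and $\langle A_k(s)\rangle\geqslant 3$, the rescaled exponent satisfies $\lambda(A_k(s)+R)=\mu(A_k(s)+R)/\langle A_k(s)\rangle\leqslant C_R\mu$. Restricting the $\mu$-integration to the fixed interval $I:=[0,\min(1,3\lambda_0)]$, which is contained in $[0,\lambda_0\langle A_k(s)\rangle]$, the integrand is bounded below by $c\,\mu^q$, and the hypothesis $q>-1$ ensures that $\int_I \mu^q\,d\mu$ is a finite positive constant. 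This already yields \eqref{lower bound xi q}.

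For \eqref{lower bound eta q} the same rescaling leaves the additional factor $1/\bigl(1+\mu(A_k(t)-A_k(s))/\langle A_k(s)\rangle\bigr)$ inside the integrand. The crucial observation is that on $I$ we have $\mu\leqslant 1$, so
\begin{align*}
1+\mu\,\frac{A_k(t)-A_k(s)}{\langle A_k(s)\rangle}\leqslant 1+\frac{A_k(t)-A_k(s)}{\langle A_k(s)\rangle}=\frac{\langle A_k(t)\rangle}{\langle A_k(s)\rangle},
\end{align*}
which provides an extra factor $\langle A_k(s)\rangle/\langle A_k(t)\rangle$ and converts one of the $q+1$ powers of $\langle A_k(s)\rangle^{-1}$ into a $\langle A_k(t)\rangle^{-1}$, producing exactly \eqref{lower bound eta q} once the extra $(st)^{k/2}$ factored out of $\eta_q$ is reinstated.

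I expect this splitting to be the main technical obstacle. The naive estimate $1+\lambda(A_k(t)-A_k(s))\leqslant 1+\lambda_0 A_k(t)\lesssim \langle A_k(t)\rangle$ would only deliver $\eta_q\gtrsim (st)^{k/2}\langle A_k(t)\rangle^{-q-1}$, which is strictly weaker than \eqref{lower bound eta q} whenever $q>0$ and $A_k(t)\gg A_k(s)$. Truncating $\mu$ at $1$ after the scaling by $\langle A_k(s)\rangle$ is precisely the device that trades one power of $\langle A_k(t)\rangle$ against one of $\langle A_k(s)\rangle$, and keeping careful track of this accounting is the decisive point of the proof.
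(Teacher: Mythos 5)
Your proof is correct, and while it follows the same macroscopic strategy as the paper's proof~--~namely, restricting the $\lambda$-integration to a window of width comparable to $\langle A_k(s)\rangle^{-1}$, on which $\mathrm{e}^{-\lambda(A_k(s)+R)}\varphi_\lambda(x)$ is uniformly bounded below and $\lambda^q\,\mathrm{d}\lambda$ produces the factor $\langle A_k(s)\rangle^{-q-1}$~--~it handles both delicate points by genuinely different devices. First, the paper works on the window $\big[\lambda_0/\langle A_k(s)\rangle, 2\lambda_0/\langle A_k(s)\rangle\big]$ and lower-bounds $\varphi_\lambda$ there via the two-sided asymptotics \eqref{asymptotic YordanovZhang function}; your Jensen bound $\varphi\geqslant |\mathbb{S}^{n-1}|$ (and $\cosh\geqslant 1$ for $n=1$) is more elementary and valid for all $\lambda$, at the price that your window $[0,\min(1,3\lambda_0)/\langle A_k(s)\rangle]$ touches $\lambda=0$, so the hypothesis $q>-1$ enters as an integrability condition rather than, as in the paper, merely through the evaluation of $\int\lambda^q\,\mathrm{d}\lambda$ on an interval bounded away from zero. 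Second, and more substantially, for $\eta_q$ the paper writes $\mathrm{e}^{-\lambda(\phi_k(t)-\phi_k(s))}\sinh\big(\lambda(\phi_k(t)-\phi_k(s))\big)=\tfrac12\big(1-\mathrm{e}^{-2\lambda(\phi_k(t)-\phi_k(s))}\big)$ and then needs a dichotomy, $\phi_k(t)-\phi_k(s)\gtrless \tfrac{1}{2\lambda_0}\langle A_k(s)\rangle$, to show $\big(1-\mathrm{e}^{-2\lambda_0(\phi_k(t)-\phi_k(s))/\langle A_k(s)\rangle}\big)/(\phi_k(t)-\phi_k(s))\gtrsim \langle A_k(t)\rangle^{-1}$; your pointwise inequality $\sinh z/z\geqslant \mathrm{e}^z/(2(1+z))$, combined with the exact identity $\langle A_k(s)\rangle+\big(A_k(t)-A_k(s)\big)=\langle A_k(t)\rangle$ (valid because $A_k$ is nonnegative and nondecreasing on $[1,\infty)$, reading the statement's $\langle y\rangle\doteq 3+|s|$ as the evident typo for $3+|y|$), disposes of this in one line. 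Both arguments rest on the same mechanism, which your closing remark correctly isolates: only because $\lambda\lesssim\langle A_k(s)\rangle^{-1}$ on the window can one power of $\langle A_k(s)\rangle^{-1}$ be traded against $\langle A_k(t)\rangle^{-1}$, and a naive bound $1+\lambda(A_k(t)-A_k(s))\lesssim\langle A_k(t)\rangle$ would indeed only give the weaker exponent $\langle A_k(t)\rangle^{-q-1}$. What each approach buys: the paper's case split is robust and does not depend on the precise normalization of $\langle\,\cdot\,\rangle$, whereas your version is shorter, avoids the asymptotics of $\varphi$ entirely, and makes the exchange of powers between $\langle A_k(s)\rangle$ and $\langle A_k(t)\rangle$ completely transparent.
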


\begin{proof} We follow the main ideas of the proof of Lemma 3.1 in \cite{WakYor18}. Since 
\begin{align} \label{asymptotic YordanovZhang function}
\langle |x| \rangle^{-\frac{n-1}{2}}\mathrm{e}^{|x|}\lesssim \varphi(x) \lesssim\langle |x| \rangle^{-\frac{n-1}{2}}\mathrm{e}^{|x|} 
\end{align} holds for any $x\in \mathbb{R}^n$, we can find a constant $B=B(\lambda_0,R,k)>0$ independent of $\lambda$ and $s$ such that
$$B\leqslant \inf_{\lambda \in \left[\frac{\lambda_0}{\langle A_k(s)\rangle}, \frac{2\lambda_0}{\langle A_k(s)\rangle}\right]} \inf_{|x|\leqslant A_k(s)+R} \mathrm{e}^{-\lambda(A_k(s)+R)}\varphi_\lambda(x).$$
Let us begin with \eqref{lower bound xi q}. Shrinking the domain of integration in \eqref{def xi q} to $\left[\frac{\lambda_0}{\langle A_k(s)\rangle}, \frac{2\lambda_0}{\langle A_k(s)\rangle}\right]$ and applying the previous inequality, we get
\begin{align*}
\xi_q (t,s,x;k) &\geqslant  \int_{\lambda_0/\langle A_k(s)\rangle} ^{2\lambda_0/\langle A_k(s)\rangle} \mathrm{e}^{-\lambda (A_k(t)-A_k(s))} \cosh \big(\lambda (\phi_k(t)-\phi_k(s))\big) \mathrm{e}^{-\lambda (A_k(s)+R)}  \varphi_\lambda(x) \lambda^q \,\mathrm{d}\lambda \\
&\geqslant B \int_{\lambda_0/\langle A_k(s)\rangle} ^{2\lambda_0/\langle A_k(s)\rangle} \mathrm{e}^{-\lambda (A_k(t)-A_k(s))} \cosh \big(\lambda (\phi_k(t)-\phi_k(s))\big)  \lambda^q \,\mathrm{d}\lambda \\
& = B/2 \int_{\lambda_0/\langle A_k(s)\rangle} ^{2\lambda_0/\langle A_k(s)\rangle} \left(1+\mathrm{e}^{-2\lambda (\phi_k(t)-\phi_k(s))}\right)  \lambda^q \,\mathrm{d}\lambda \\ & \geqslant B/2 \int_{\lambda_0/\langle A_k(s)\rangle} ^{2\lambda_0/\langle A_k(s)\rangle} \lambda^q \,\mathrm{d}\lambda =  \frac{B(2^{q+1}-1) \lambda_0^{q+1}}{2(q+1)}  \langle A_k(s)\rangle^{-q-1}.
\end{align*} We prove now \eqref{lower bound eta q}. Repeating similar steps as before, we arrive at
\begin{align*}
\eta_q (t,s,x;k) &\geqslant (st)^{\frac{k}{2}} \int_{\lambda_0/\langle A_k(s)\rangle} ^{2\lambda_0/\langle A_k(s)\rangle} \mathrm{e}^{-\lambda (A_k(t)-A_k(s))} \frac{\sinh \big(\lambda (\phi_k(t)-\phi_k(s))\big) }{\lambda  (\phi_k(t)-\phi_k(s)) }\, \mathrm{e}^{-\lambda (A_k(s)+R)}  \varphi_\lambda(x) \lambda^q \,\mathrm{d}\lambda \\
&\geqslant \tfrac{B}{2}  (st)^{\frac{k}{2}} \int_{\lambda_0/\langle A_k(s)\rangle} ^{2\lambda_0/\langle A_k(s)\rangle}  \frac{1-\mathrm{e}^{-2\lambda (\phi_k(t)-\phi_k(s))}}{\phi_k(t)-\phi_k(s) }\,  \lambda^{q-1} \,\mathrm{d}\lambda  \\
&\geqslant \tfrac{B}{2} (st)^{\frac{k}{2}} \frac{1-\mathrm{e}^{-2\lambda_0  \frac{\phi_k(t)-\phi_k(s)}{\langle A_k(s)\rangle}}}{\phi_k(t)-\phi_k(s) } \int_{\lambda_0/\langle A_k(s)\rangle} ^{2\lambda_0/\langle A_k(s)\rangle}    \lambda^{q-1} \,\mathrm{d}\lambda \\
& = \frac{B(2^q-1)\lambda_0^q}{2q} \,   (st)^{\frac{k}{2}} \langle A_k(s)\rangle^{-q} \, \frac{1-\mathrm{e}^{-2\lambda_0  \frac{\phi_k(t)-\phi_k(s)}{\langle A_k(s)\rangle}}}{\phi_k(t)-\phi_k(s) } .
\end{align*} The previous inequality implies \eqref{lower bound eta q}, provided that $$\frac{1-\mathrm{e}^{-2\lambda_0  \frac{\phi_k(t)-\phi_k(s)}{\langle A_k(s)\rangle}}}{\phi_k(t)-\phi_k(s) } \gtrsim \langle A_k(t)\rangle^{-1}  $$ holds. Let us prove this last inequality. For $\phi_k(t)-\phi_k(s)\geqslant \frac{1}{2\lambda_0}\langle A_k(s)\rangle $, we have $$1-\mathrm{e}^{-2\lambda_0  \frac{\phi_k(t)-\phi_k(s)}{\langle A_k(s)\rangle}} \geqslant 1-\mathrm{e}^{-1}$$ and, consequently,
\begin{align*}
\frac{1-\mathrm{e}^{-2\lambda_0  \frac{\phi_k(t)-\phi_k(s)}{\langle A_k(s)\rangle}}}{\phi_k(t)-\phi_k(s) }  & \gtrsim \big(\phi_k(t)-\phi_k(s) \big)^{-1} \geqslant A_k(t)^{-1} \geqslant \langle A_k(t)\rangle^{-1}.
\end{align*} On the other hand, in the case $\phi_k(t)-\phi_k(s)\leqslant \frac{1}{2\lambda_0}\langle A_k(s)\rangle $, employing the inequality $1-\mathrm{e}^{-\sigma}\geqslant \sigma/2$ for $\sigma\in [0,1]$, we find immediately
\begin{align*}
\frac{1-\mathrm{e}^{-2\lambda_0  \frac{\phi_k(t)-\phi_k(s)}{\langle A_k(s)\rangle}}}{\phi_k(t)-\phi_k(s) }  & \geqslant  \frac{\lambda_0}{\langle A_k(s)\rangle} \geqslant  \frac{\lambda_0}{\langle A_k(t)\rangle}.
\end{align*} So, also the proof of \eqref{lower bound eta q} is completed.
\end{proof}

Next we prove an upper bound estimate in the special case $s=t$.
\begin{lemma} \label{Lemma lupper bound estimate xi q t=s} Let $n\geqslant 1$ and $\lambda_0>0$. If we assume  $q> (n-3)/2$, then, for $t\geqslant 1$ and $|x|\leqslant A_k(t) +R$ the following  upper bound estimate holds:
\begin{align}
\xi_q (t,t,x;k) &\leqslant B_2 \langle A_k(t)\rangle ^{-\frac{n-1}{2}}  \langle A_k(t) - |x|\rangle ^{\frac{n-3}{2}-q}. \label{upper bound xi q t=s}
\end{align}
Here $B_2$ is a positive constant depending only on $\lambda_0,q,R,k$ and $\langle y\rangle$ denotes the same function as in the statement of Lemma \ref{Lemma lower bound estimates xi q and eta q}.
\end{lemma}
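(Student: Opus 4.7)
The plan is to exploit the simplification $s=t$: since $\cosh(0)=1$, the definition \eqref{def xi q} collapses to
\begin{align*}
\xi_q(t,t,x;k) = \int_0^{\lambda_0} \mathrm{e}^{-\lambda(A_k(t)+R)} \varphi(\lambda x) \lambda^q \, \mathrm{d}\lambda.
\end{align*}
Invoking the uniform upper bound $\varphi(y) \lesssim \langle |y|\rangle^{-(n-1)/2} \mathrm{e}^{|y|}$ recorded in \eqref{asymptotic YordanovZhang function}, I reduce the problem to the purely analytic estimate
\begin{align*}
I := \int_0^{\lambda_0} \mathrm{e}^{-\lambda \beta} \langle \lambda|x|\rangle^{-(n-1)/2} \lambda^q \, \mathrm{d}\lambda \lesssim \langle A_k(t)\rangle^{-(n-1)/2} \langle A_k(t)-|x|\rangle^{(n-3)/2-q},
\end{align*}
where $\beta := A_k(t)+R-|x| \geqslant 0$. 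The assumption $q > (n-3)/2$ will enter crucially to guarantee the integrability of a certain $\lambda$-power near $0$.

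The argument will be a case analysis on the relative sizes of $A_k(t)$ and $|x|$. When $A_k(t) \lesssim 1$, both $|x|$ and $|A_k(t)-|x||$ are bounded, so the target is of order $1$ while $I \leqslant \int_0^{\lambda_0}\lambda^q \, \mathrm{d}\lambda < \infty$ thanks to $q > -1$. When $A_k(t) \geqslant 1$ and $|x| \leqslant A_k(t)/2$, I would discard the factor $\langle \lambda|x|\rangle^{-(n-1)/2} \leqslant 1$ and change variable $\mu = \lambda\beta$ to obtain $I \lesssim \beta^{-q-1} \lesssim A_k(t)^{-q-1}$; since in this regime $\langle A_k(t) - |x|\rangle \asymp A_k(t)$, this matches the target.

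The delicate regime is $A_k(t) \geqslant 1$ with $A_k(t)/2 \leqslant |x| \leqslant A_k(t)+R$, where $|x| \asymp A_k(t)$ but the geometric factor $\langle A_k(t)-|x|\rangle$ may be of order $1$. The plan is to split the $\lambda$-integral at the threshold $\lambda_\ast = 1/A_k(t)$. On $[0,\lambda_\ast]$ the Yordanov -- Zhang factor is of order $1$, so the contribution is again $\lesssim A_k(t)^{-q-1}$, and the inequality $\langle A_k(t)-|x|\rangle \lesssim A_k(t)$ together with $(n-3)/2 - q < 0$ absorbs this into the target. On $[\lambda_\ast,\lambda_0]$ I would use $\langle \lambda A_k(t)\rangle \asymp \lambda A_k(t)$ to extract $A_k(t)^{-(n-1)/2}$, which reduces the matter to estimating
\begin{align*}
\int_{\lambda_\ast}^{\lambda_0} \mathrm{e}^{-\lambda\beta} \lambda^{q - (n-1)/2} \, \mathrm{d}\lambda;
\end{align*}
this is precisely where $q > (n-3)/2$ is needed, since it makes the exponent $q-(n-1)/2$ strictly greater than $-1$ and hence the integral finite near $\lambda=0$. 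The bound by $\langle A_k(t)-|x|\rangle^{(n-3)/2-q}$ then follows from a rescaling $\mu = \lambda \beta$ when $\beta \geqslant 1$ (after noting $\langle A_k(t)-|x|\rangle \lesssim \beta$) and from the trivial $\mathrm{e}^{-\lambda\beta} \leqslant 1$ when $\beta < 1$, in which case $\langle A_k(t)-|x|\rangle$ is bounded in terms of $R$.

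The main technical obstacle I anticipate is the careful comparison between $\beta = A_k(t)+R-|x|$ and $\langle A_k(t)-|x|\rangle$ across these subcases; the constants produced should depend only on $\lambda_0,q,R,k$ and not on $t$ or $x$, which requires some bookkeeping near the light-cone boundary where $\beta$ and $\langle A_k(t)-|x|\rangle$ may both be of order $1$ but not comparable to each other in a single one-sided inequality.
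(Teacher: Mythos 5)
Your proposal is correct and follows essentially the same route as the paper: reduce via $\cosh(0)=1$ and the bound \eqref{asymptotic YordanovZhang function}, split according to whether $|x|$ is far from or near the light-cone boundary, use $q>(n-3)/2$ to make $\lambda^{q-(n-1)/2}$ integrable at the origin, and rescale $\mu=\lambda\beta$ with $\beta=A_k(t)+R-|x|$; the comparison you worried about does close, since $\langle A_k(t)-|x|\rangle\lesssim_R \beta$ when $\beta\geqslant 1$ and $\langle A_k(t)-|x|\rangle\asymp_R 1$ when $\beta<1$. The only deviation is your extra split of the $\lambda$-integral at $\lambda_\ast=1/A_k(t)$ in the near-cone regime, which the paper avoids by applying the cruder bound $\langle\lambda|x|\rangle^{-(n-1)/2}\leqslant(\lambda|x|)^{-(n-1)/2}$ on the whole interval $[0,\lambda_0]$ at once (legitimate precisely because $q-(n-1)/2>-1$), so your low-frequency piece is harmless but redundant.
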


\begin{proof} We follow the proof of Lemma 3.1 (iii) in \cite{WakYor18}. Applying \eqref{asymptotic YordanovZhang function}, we get 
\begin{align*}
\xi_q(t,t,x;k) & = \int_0^{\lambda_0} \mathrm{e}^{-\lambda (A_k(t)+R)}  \varphi_\lambda(x) \lambda^q \,\mathrm{d}\lambda \lesssim \int_0^{\lambda_0} \langle \lambda |x| \rangle^{-\frac{n-1}{2}} \mathrm{e}^{-\lambda (A_k(t)+R-|x|)}   \lambda^q \,\mathrm{d}\lambda. 
\end{align*} Let us consider separately two different cases. If $|x|\leqslant (A_k(t)+R)/2$, then,
\begin{align*}
\xi_q(t,t,x;k) &  \lesssim \int_0^{\lambda_0}  \mathrm{e}^{-\lambda (A_k(t)+R-|x|)}   \lambda^q \,\mathrm{d}\lambda  \lesssim \int_0^{\lambda_0}  \mathrm{e}^{-\lambda (A_k(t)+R)/2}   \lambda^q \,\mathrm{d}\lambda \\
&  \lesssim  (A_k(t)+R)^{-q-1} \int_0^{\infty}  \mathrm{e}^{-\mu/2}   \mu^q \,\mathrm{d}\mu \lesssim  (A_k(t)+R)^{-q-1}  
 \lesssim  \langle A_k(t)\rangle ^{-q-1} \\
 & \lesssim \langle A_k(t)\rangle ^{-\frac{n-1}{2}}  \langle A_k(t) - |x|\rangle ^{\frac{n-3}{2}-q}.
\end{align*} In particular, in the last estimate we used the inequality $\langle A_k(t) - |x|\rangle \lesssim \langle A_k(t)\rangle$, which follows trivially from $|A_k(t) - |x|| \leqslant A_k(t)$ for $ |x|\leqslant A_k(t)$ and from  $\langle A_k(t) - |x|\rangle \lesssim 1$ for $A_k(t)\leqslant |x|\leqslant (A_k(t)+R)/2$.

On the other hand, for $|x|\geqslant (A_k(t)+R)/2$, we may estimate
\begin{align}
\xi_q(t,t,x;k) & \lesssim (A_k(t)+R)^{-\frac{n-1}{2}}  \int_0^{\lambda_0} \mathrm{e}^{-\lambda (A_k(t)+R-|x|)}   \lambda^{q-\frac{n-1}{2}} \,\mathrm{d}\lambda \notag \\
 & \lesssim \langle A_k(t)\rangle ^{-\frac{n-1}{2}} (A_k(t)+R-|x|)^{-q+\frac{n-3}{2}} \int_0^{\infty} \mathrm{e}^{-\mu }   \mu^{q-\frac{n-1}{2}} \,\mathrm{d}\mu  \notag \\
 & \lesssim \langle A_k(t)\rangle ^{-\frac{n-1}{2}} (A_k(t)+R-|x|)^{-q+\frac{n-3}{2}} . \label{upper bound xiq(t,t,x) intermediate}
\end{align} When $(A_k(t)+R)/2\leqslant |x|\leqslant A_k(t)$, thanks to the inequality $A_k(t)+R-|x|\gtrsim \langle A_k(t)-|x|\rangle$, from \eqref{upper bound xiq(t,t,x) intermediate} it follows easily \eqref{upper bound xi q t=s}; while for $A_k(t)\leqslant |x|\leqslant A_k(t)+R$, as $ \langle A_k(t)-|x|\rangle\approx 1$, the estimate
\begin{align*}
\xi_q(t,t,x;k) & \lesssim (A_k(t)+R)^{-\frac{n-1}{2}}  \int_0^{\lambda_0} \mathrm{e}^{-\lambda (A_k(t)+R-|x|)}   \lambda^{q-\frac{n-1}{2}} \,\mathrm{d}\lambda \\&   \lesssim \langle A_k(t)\rangle ^{-\frac{n-1}{2}}   \int_0^{\lambda_0}   \lambda^{q-\frac{n-1}{2}} \,\mathrm{d}\lambda \lesssim \langle A_k(t)\rangle ^{-\frac{n-1}{2}}  
\end{align*} is sufficient to conclude \eqref{upper bound xi q t=s}. This completes the proof.
\end{proof}

\subsection{Derivation of the iteration frame} \label{Subsection iteration frame}

In this section, we introduce the time -- dependent functional whose dynamic is studied in order to prove the blow -- up result. Hence, we derive the iteration frame for this functional and a first lower bound estimate of logarithmic type.

Let us introduce the functional
\begin{align}\label{def functional mathcalU}
\mathcal{U}(t) \doteq t^{-\frac{k}{2}} \int_{\mathbb{R}^n} u(t,x) \, \xi_q(t,t,x;k) \, \mathrm{d} x
\end{align} for $t\geqslant 1$ and for some $q>(n-3)/2$. 
From \eqref{fundamental inequality functional mathcalU}, \eqref{lower bound xi q} and \eqref{lower bound eta q}, it follows
\begin{align*}
\mathcal{U}(t) \gtrsim B_0 \varepsilon \, t^{-\frac{k}{2}} \int_{\mathbb{R}^n} u_0(x) \, \mathrm{d}x + B_1 \varepsilon \, \frac{A_k(t)}{\langle A_k(t)\rangle } \int_{\mathbb{R}^n} u_1(x) \, \mathrm{d}x.
\end{align*} If we assume that $u_0,u_1$ are both nonnegative and nontrivial, then, we find that 
\begin{align}\label{mathcalU > epsilon}
\mathcal{U}(t)\gtrsim \varepsilon
\end{align} for any $t\in [1,T)$, where the unexpressed multiplicative constant depends on $u_0,u_1$.

In the next proposition, we derive the iteration frame for the functional $\mathcal{U}$.

\begin{proposition} \label{Proposition iteration frame} Suppose that the assumptions in Corollary \ref{Corollary fund ineq} are satisfied and let $q=(n-1)/2-1/p$. If $\mathcal{U}$ is defined by \eqref{def functional mathcalU}, then, there exists a positive constant $C=C(n,p,R,k)$ such that
\begin{align} \label{iteration frame} 
\mathcal{U}(t)\geqslant C \langle A_k(t)\rangle^{-1}\int_1^t \frac{\phi_k(t)-\phi_k(s)}{s} \big(\log \langle A_k(s)\rangle\big)^{-(p-1)} (\mathcal{U}(s))^p\, \mathrm{d}s
\end{align} for any $t\in (1,T)$. 
\end{proposition}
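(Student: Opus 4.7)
The plan is to build the iteration frame from the inequality \eqref{fundamental inequality functional mathcalU} of Corollary \ref{Corollary fund ineq} using three ingredients: the lower bound \eqref{lower bound eta q} for $\eta_q$, a Hölder estimate that exploits the support condition, and the upper bound \eqref{upper bound xi q t=s} for $\xi_q(s,s,\cdot;k)$ with the specific choice $q=(n-1)/2-1/p$. Since $u_0,u_1\geqslant 0$, the two initial-data terms in \eqref{fundamental inequality functional mathcalU} are nonnegative and can be dropped. The spatial integration in the nonlinear term runs over $\supp u(s,\cdot)\subset B_{A_k(s)+R}$, which is exactly the region where \eqref{lower bound eta q} applies, so I would pull out the factor $B_1(st)^{k/2}\langle A_k(t)\rangle^{-1}\langle A_k(s)\rangle^{-q}$ and multiply by $t^{-k/2}$. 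This already produces the prefactor $\langle A_k(t)\rangle^{-1}$ and the kernel $(\phi_k(t)-\phi_k(s))$ appearing in \eqref{iteration frame}; what remains inside the integrand is
\begin{equation*}
s^{1-p+k/2}\,\langle A_k(s)\rangle^{-q}\int_{\mathbb{R}^n}|u(s,x)|^p\,\mathrm{d}x.
\end{equation*}

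To produce $(\mathcal{U}(s))^p$ out of $\int|u|^p\,\mathrm{d}x$, I would apply Hölder's inequality with conjugate exponents $p$ and $p'=p/(p-1)$ to the identity $s^{k/2}\mathcal{U}(s)=\int u(s,x)\xi_q(s,s,x;k)\,\mathrm{d}x$, once again restricting the $\xi_q$ integral to $B_{A_k(s)+R}$ via the support condition. This reduces the task to bounding $\int_{B_{A_k(s)+R}}\xi_q(s,s,x;k)^{p'}\,\mathrm{d}x$ from above. Here the choice $q=(n-1)/2-1/p$ is crucial: it makes $(n-3)/2-q=-(p-1)/p$, so after applying \eqref{upper bound xi q t=s} and raising to the power $p'$, the weight $\langle A_k(s)-|x|\rangle$ appears with the borderline exponent $-1$. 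Passing to polar coordinates, bounding $r^{n-1}\lesssim\langle A_k(s)\rangle^{n-1}$ on $[0,A_k(s)+R]$, and integrating $\langle A_k(s)-r\rangle^{-1}$ yields the log-divergent factor $\log\langle A_k(s)\rangle$. Taking the $(p-1)$-th power on the denominator side of Hölder then produces $(\log\langle A_k(s)\rangle)^{-(p-1)}$, which matches \eqref{iteration frame}.

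Collecting everything, one arrives at
\begin{equation*}
\mathcal{U}(t)\gtrsim \langle A_k(t)\rangle^{-1}\int_1^t(\phi_k(t)-\phi_k(s))\,s^{1-p+k(p+1)/2}\,\langle A_k(s)\rangle^{-q-(n-1)(p-2)/2}(\log\langle A_k(s)\rangle)^{-(p-1)}(\mathcal{U}(s))^p\,\mathrm{d}s.
\end{equation*}
The exponent $-q-(n-1)(p-2)/2=-(n-1)(p-1)/2+1/p$ is negative in the regime $n>N(k)$, $p=p_0(n,k)$, so the elementary bound $\langle A_k(s)\rangle\leqslant C s^{1-k}$ converts this factor into a power of $s$ and gives a legitimate lower bound. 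The main obstacle, and essentially the only nontrivial step beyond routine bookkeeping, is to check that the total exponent of $s$ in the integrand then equals exactly $-1$. After multiplying out by $2p$ and regrouping, this requirement reduces to the identity $((1-k)n+1)p^2-((1-k)n+3+2k)p-2(1-k)=0$, which is precisely equation \eqref{equation critical exponent general case} characterising $p=p_0(n,k)$. This algebraic match closes the argument and yields \eqref{iteration frame}.
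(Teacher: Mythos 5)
Your proposal is correct and takes essentially the same route as the paper's proof: H\"older's inequality applied to $s^{k/2}\mathcal{U}(s)=\int_{\mathbb{R}^n} u(s,x)\,\xi_q(s,s,x;k)\,\mathrm{d}x$, the bounds \eqref{lower bound eta q} and \eqref{upper bound xi q t=s} with the borderline choice $q=(n-1)/2-1/p$ generating the factor $\log\langle A_k(s)\rangle$, and the quadratic equation satisfied by $p_0(n,k)$ forcing the total $s$-exponent in the integrand to equal exactly $-1$. The only deviations are cosmetic and commute to the same estimate: you pull the $x$-independent lower bound for $\eta_q$ out of the spatial integral before applying H\"older, whereas the paper keeps $\eta_q$ as the H\"older weight and bounds the combined dual integral $\int_{B_{R+A_k(s)}}\xi_q^{p'}\eta_q^{-p'/p}\,\mathrm{d}x$; and you convert $\langle A_k(s)\rangle$ into a power of $s$ via the one-sided bound with a sign check on the exponent, where the paper uses the two-sided equivalence $s\approx\langle A_k(s)\rangle^{1/(1-k)}$ for $s\geqslant 1$, which renders the sign check (correct, though degenerating to a harmless zero exponent in the borderline case) unnecessary.
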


\begin{proof}
By the definition of the functional \eqref{def functional mathcalU}, applying H\"older's inequality we get
\begin{align*}
s^{\frac k2} \mathcal{U} (s) \leq \left(\int_{\mathbb{R}^n}|u(s,x)|^p \eta_q(t,s,x;k) \, \mathrm{d}x\right)^{1/p} \left(\int_{B_{R+A_k(s)}} \frac{\big(\xi_q(s,s,x;k)\big)^{p'}}{\big(\eta_q(t,s,x;k)\big)^{p'/p}} \, \mathrm{d}x\right)^{1/p'},
\end{align*} where $1/p+1/p'=1$. Therefore,
\begin{align} \label{intermediate lower bound int |u|^p eta_q}
\int_{\mathbb{R}^n}|u(s,x)|^p \eta_q(t,s,x;k) \, \mathrm{d}x \geqslant \big(s^{\frac k2} \mathcal{U} (s)\big)^p \left(\int_{B_{R+A_k(s)}} \frac{\big(\xi_q(s,s,x;k)\big)^{p/(p-1)}}{\big(\eta_q(t,s,x;k)\big)^{1/(p-1)}} \, \mathrm{d}x\right)^{-(p-1)}.
\end{align} Let us determine now an upper bound estimates for the integral on the right -- hand side of \eqref{intermediate lower bound int |u|^p eta_q}. By using \eqref{upper bound xi q t=s} and \eqref{lower bound eta q}, we obtain
\begin{align*}
& \int_{B_{R+A_k(s)}}  \frac{\big(\xi_q(s,s,x;k)\big)^{p/(p-1)}}{\big(\eta_q(t,s,x;k)\big)^{1/(p-1)}} \, \mathrm{d}x \\ 
& \qquad \leqslant   B_1^{-\frac{1}{p-1}} B_2^{\frac{p}{p-1}}  \langle A_k(s)\rangle ^{-\frac{n-1}{2}\frac{p}{p-1} } (st)^{-\frac{k}{2(p-1)}} \langle A_k(t)\rangle ^{\frac{1}{p-1}}\langle A_k(s)\rangle ^{\frac{q}{p-1}}\int_{B_{R+A_k(s)}}  \langle A_k(s) - |x|\rangle ^{(\frac{n-3}{2}-q)\frac{p}{p-1}}  \mathrm{d}x \\
 & \qquad \leqslant   B_1^{-\frac{1}{p-1}} B_2^{\frac{p}{p-1}}   (st)^{-\frac{k}{2(p-1)}} \langle A_k(t)\rangle ^{\frac{1}{p-1}}\langle A_k(s)\rangle ^{\frac{1}{p-1}(-\frac{n-1}{2}p+\frac{n-1}{2}-\frac{1}{p})}\int_{B_{R+A_k(s)}}  \langle A_k(s) - |x|\rangle ^{-1} \mathrm{d}x \\
 & \qquad \leqslant   B_1^{-\frac{1}{p-1}} B_2^{\frac{p}{p-1}}   (st)^{-\frac{k}{2(p-1)}} \langle A_k(t)\rangle ^{\frac{1}{p-1}}\langle A_k(s)\rangle ^{\frac{1}{p-1}(-\frac{n-1}{2}p+\frac{n-1}{2}-\frac{1}{p})+n-1}  \log \langle A_k(s) \rangle  ,
\end{align*} where in the second step we used 
 $q=(n-1)/2-1/p$ to get exactly $-1$ as power of the function in the integral.
Hence, from \eqref{intermediate lower bound int |u|^p eta_q} we get 
\begin{align*}
\int_{\mathbb{R}^n}|u(s,x)|^p \eta_q(t,s,x;k) \, \mathrm{d}x &\gtrsim \big(s^{\frac k2} \mathcal{U} (s)\big)^p (st)^{\frac{k}{2}} \langle A_k(t)\rangle ^{-1}\langle A_k(s)\rangle ^{\frac{n-1}{2}(p-1)+\frac{1}{p}-(n-1)(p-1)}  \big(\log \langle A_k(s) \rangle\big)^{-(p-1)} \\
& \gtrsim t^{\frac{k}{2}}   \langle A_k(t)\rangle ^{-1} s^{\frac {k}{2}(p+1)} \langle A_k(s)\rangle ^{-\frac{n-1}{2}(p-1)+\frac{1}{p}}    \big(\log \langle A_k(s) \rangle\big)^{-(p-1)} \big(\mathcal{U} (s)\big)^p.
\end{align*}
If we combine the previous lower bound estimate and \eqref{fundamental inequality functional mathcalU}, we have
\begin{align*}
\mathcal{U}(t) & \geqslant t^{-\frac k2} \int_1^t  (\phi_k(t)-\phi_k(s)) \, s^{1-p}  \int_{\mathbb{R}^n} |u(s,x)|^p \eta_q(t,s,x;k) \, \mathrm{d}x \, \mathrm{d}s \\
& \gtrsim    \langle A_k(t)\rangle ^{-1}  \int_1^t  (\phi_k(t)-\phi_k(s)) \, s^{1-p+\frac {k}{2}(p+1)} \langle A_k(s)\rangle ^{-\frac{n-1}{2}(p-1)+\frac{1}{p}}    \big(\log \langle A_k(s) \rangle\big)^{-(p-1)} \big(\mathcal{U} (s)\big)^p\, \mathrm{d}s \\
& \gtrsim    \langle A_k(t)\rangle ^{-1}  \int_1^t  (\phi_k(t)-\phi_k(s))  \langle A_k(s)\rangle ^{\frac{1-p}{1-k}+\frac {k(p+1)}{2(1-k)}-\frac{n-1}{2}(p-1)+\frac{1}{p}}    \big(\log \langle A_k(s) \rangle\big)^{-(p-1)} \big(\mathcal{U} (s)\big)^p\, \mathrm{d}s\\
& \gtrsim    \langle A_k(t)\rangle ^{-1}  \int_1^t  (\phi_k(t)-\phi_k(s))  \langle A_k(s)\rangle ^{-\left(\frac{n-1}{2}+\frac{2-k}{2(1-k)}\right)p+\left(\frac{n-1}{2}+\frac{2+k}{2(1-k)}\right)+\frac 1p}    \big(\log \langle A_k(s) \rangle\big)^{-(p-1)} \big(\mathcal{U} (s)\big)^p\, \mathrm{d}s,
\end{align*} where in third step we used $s= (1-k)^{\frac{1}{1-k}}(A_k(s)+\phi_k(1))^{\frac{1}{1-k}}\approx \langle A_k(s)\rangle^{\frac{1}{1-k}}$ for $s\geqslant 1$. Since $p=p_0(n,k)$ from \eqref{equation critical exponent general case} it follows
\begin{align}\label{equation critical exponent intermediate}
-\left(\tfrac{n-1}{2}+\tfrac{2-k}{2(1-k)}\right)p+\left(\tfrac{n-1}{2}+\tfrac{2+k}{2(1-k)}\right)+\tfrac 1p= -1-\tfrac{k}{1-k}=-\tfrac{1}{1-k},
\end{align} then, plugging \eqref{equation critical exponent intermediate} in the last lower bound estimate for $\mathcal{U}(t)$ we find
\begin{align*}
\mathcal{U}(t) &  \gtrsim    \langle A_k(t)\rangle ^{-1}  \int_1^t  (\phi_k(t)-\phi_k(s))  \langle A_k(s)\rangle ^{-\frac{1}{1-k}}    \big(\log \langle A_k(s) \rangle\big)^{-(p-1)} \big(\mathcal{U} (s)\big)^p\, \mathrm{d}s \\
&  \gtrsim    \langle A_k(t)\rangle ^{-1}  \int_1^t  \frac{\phi_k(t)-\phi_k(s)}{s}  \big(\log \langle A_k(s) \rangle\big)^{-(p-1)} \big(\mathcal{U} (s)\big)^p\, \mathrm{d}s,
\end{align*} which is precisely \eqref{iteration frame}. This completes the proof.
\end{proof}

\begin{lemma} \label{Lemma lower bound int |u|^p} Suppose that the assumptions in Corollary \ref{Corollary fund ineq} are satisfied. Then, there exists a positive constant $K=K(u_0,u_1,n,p,R,k)$ such that the lower bound estimate
\begin{align} \label{lower bound int |u|^p}
\int_{\mathbb{R}^n} |u(t,x)|^p \, \mathrm{d}x \geqslant K \varepsilon^p \langle A_k(t)\rangle^{(n-1)(1-\frac{p}{2})+\frac{kp}{2(1-k)}}
\end{align} holds for any $t\in (1,T)$. 
\end{lemma}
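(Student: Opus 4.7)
The plan is to apply Hölder's inequality with weight $\xi_q(t,t,\cdot;k)$ in order to convert a lower bound for $\int u\,\xi_q\,\mathrm{d}x$ into a lower bound for $\int|u|^p\,\mathrm{d}x$. The strategy is the standard one used in Yordanov--Zhang type arguments, adapted to the $\xi_q,\eta_q$ weights from this paper. The only delicate point is the choice of $q$: for Proposition \ref{Proposition iteration frame} the choice $q=(n-1)/2-1/p$ produced an extra logarithmic factor, so here I would instead fix any exponent $q>(n-1)/2-1/p$ (which automatically satisfies the constraint $q>(n-3)/2$ required for Lemma \ref{Lemma lupper bound estimate xi q t=s}), chosen so that $\alpha:=q-(n-3)/2$ obeys $\alpha p'>1$.

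For the lower bound on the weighted integral, I would drop the nonnegative nonlinear term in Corollary \ref{Corollary fund ineq} and combine it with Lemma \ref{Lemma lower bound estimates xi q and eta q} evaluated at $s=1$ (where $A_k(1)=0$ and $\langle A_k(1)\rangle=3$), obtaining
\begin{align*}
\int_{\mathbb{R}^n}u(t,x)\,\xi_q(t,t,x;k)\,\mathrm{d}x\gtrsim \varepsilon\int_{\mathbb{R}^n}u_0(x)\,\mathrm{d}x+\varepsilon\,t^{k/2}\,\frac{A_k(t)}{\langle A_k(t)\rangle}\int_{\mathbb{R}^n}u_1(x)\,\mathrm{d}x.
\end{align*}
Since $u_0,u_1\geqslant 0$ are nontrivial, and since $A_k(t)/\langle A_k(t)\rangle$ is bounded below by a positive constant for $t$ bounded away from $1$ while $t^{k/2}\approx 1$ on $[1,2]$, both terms combine to yield $\int u(t,x)\xi_q(t,t,x;k)\,\mathrm{d}x\gtrsim\varepsilon\, t^{k/2}$ uniformly on $[1,T)$, with constant depending on $u_0,u_1,n,p,k,R$.

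For the upper bound side, the support condition $\mathrm{supp}\,u(t,\cdot)\subset B_{R+A_k(t)}$ allows Hölder's inequality on the ball, and Lemma \ref{Lemma lupper bound estimate xi q t=s} yields
\begin{align*}
\int_{B_{R+A_k(t)}}\xi_q(t,t,x;k)^{p'}\,\mathrm{d}x\lesssim \langle A_k(t)\rangle^{-(n-1)p'/2}\int_0^{R+A_k(t)}r^{n-1}\langle A_k(t)-r\rangle^{-\alpha p'}\,\mathrm{d}r\lesssim \langle A_k(t)\rangle^{(n-1)(1-p'/2)},
\end{align*}
where $r^{n-1}\lesssim \langle A_k(t)\rangle^{n-1}$ and $\int \langle A_k(t)-r\rangle^{-\alpha p'}\mathrm{d}r$ is uniformly bounded thanks to $\alpha p'>1$. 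Rearranging Hölder then gives
\begin{align*}
\int_{\mathbb{R}^n}|u(t,x)|^p\,\mathrm{d}x\gtrsim \frac{\bigl(\varepsilon\, t^{k/2}\bigr)^p}{\langle A_k(t)\rangle^{(p-1)(n-1)(1-p'/2)}}= \varepsilon^p\,t^{kp/2}\,\langle A_k(t)\rangle^{(n-1)(1-p/2)},
\end{align*}
using the identity $(p-1)(1-p'/2)=(p-2)/2=-(1-p/2)$. Finally, the elementary comparison $t\approx \langle A_k(t)\rangle^{1/(1-k)}$ for $t\geqslant 1$ converts $t^{kp/2}$ into $\langle A_k(t)\rangle^{kp/(2(1-k))}$ and delivers the claim.

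The main (and really only) obstacle is the choice of $q$ outlined above: with the borderline value $q=(n-1)/2-1/p$ a spurious $\log\langle A_k(t)\rangle^{-(p-1)}$ factor appears, which would spoil the stated polynomial lower bound; the strict inequality $q>(n-1)/2-1/p$ removes it. Everything else is a mechanical bookkeeping of the already proved pointwise bounds on $\xi_q$ and $\eta_q$.
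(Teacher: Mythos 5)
Your proposal is correct and takes essentially the same route as the paper's proof: the paper likewise fixes $q>(n-3)/2+1/p'$ (exactly your condition $\alpha p'>1$, which avoids the logarithmic loss of the borderline choice $q=(n-1)/2-1/p$), uses the already established bound \eqref{mathcalU > epsilon} to get $\int_{\mathbb{R}^n}u(t,x)\,\xi_q(t,t,x;k)\,\mathrm{d}x\gtrsim \varepsilon t^{k/2}$, and concludes with the same H\"older rearrangement, the same estimate $\int_{B_{R+A_k(t)}}\big(\xi_q(t,t,x;k)\big)^{p'}\mathrm{d}x\lesssim \langle A_k(t)\rangle^{-\frac{n-1}{2}p'+n-1}$, and the comparison $t\approx\langle A_k(t)\rangle^{1/(1-k)}$. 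The only cosmetic difference is that you re-derive the lower bound on the weighted average directly from Corollary \ref{Corollary fund ineq} and Lemma \ref{Lemma lower bound estimates xi q and eta q} at $s=1$ rather than citing \eqref{mathcalU > epsilon}.
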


\begin{proof} We adapt the proof of Lemma 5.1 in \cite{WakYor18} to our model. Let us fix $q>(n-3)/2 +1/p'$. Combining \eqref{def functional mathcalU}, \eqref{mathcalU > epsilon} and H\"older's inequality, it results
\begin{align*}
\varepsilon t^{\frac{k}{2}} \lesssim t^{\frac{k}{2}} \mathcal{U}(t) =  \int_{\mathbb{R}^n} u(t,x) \, \xi_q(t,t,x;k) \, \mathrm{d} x \leqslant \left(\int_{\mathbb{R}^n}|u(t,x)|^p\, \mathrm{d}x\right)^{1/p} \left(\int_{B_{R+A_k(t)}}\big(\xi_q(t,t,x;k\big)^{p'}\, \mathrm{d}x\right)^{1/p'}.
\end{align*} Hence,
\begin{align} \label{lower bound int |u|^p intermediate}
\int_{\mathbb{R}^n}|u(t,x)|^p\, \mathrm{d}x \gtrsim \varepsilon^p t^{\frac{kp}{2}} \left(\int_{B_{R+A_k(t)}}\big(\xi_q(t,t,x;k\big)^{p'}\, \mathrm{d}x\right)^{-(p-1)}.
\end{align} Let us determine an upper bound estimates for the integral of $\xi_q(t,t,x;k)^{p'}$. By using \eqref{upper bound xi q t=s}, we have
\begin{align*}
\int_{B_{R+A_k(t)}}\big(\xi_q(t,t,x;k\big)^{p'}\, \mathrm{d}x & \lesssim \langle A_k(t)\rangle ^{-\frac{n-1}{2}p'} \int_{B_{R+A_k(t)}} \langle A_k(t) - |x|\rangle ^{(n-3)p'/2-p'q} \, \mathrm{d}x \\
& \lesssim \langle A_k(t)\rangle ^{-\frac{n-1}{2}p'} \int_0^{R+A_k(t)} r^{n-1} \langle A_k(t) - r\rangle ^{(n-3)p'/2-p'q} \, \mathrm{d}r \\
& \lesssim \langle A_k(t)\rangle ^{-\frac{n-1}{2}p'+n-1} \int_0^{R+A_k(t)}  \langle A_k(t) - r\rangle ^{(n-3)p'/2-p'q} \, \mathrm{d}r.
\end{align*} Performing the change of variable $A_k(t)-r=\varrho$, one gets
\begin{align*}
\int_{B_{R+A_k(t)}}\big(\xi_q(t,t,x;k\big)^{p'}\, \mathrm{d}x & \lesssim  \langle A_k(t)\rangle ^{-\frac{n-1}{2}p'+n-1} \int^{A_k(t)}_{-R}  (3+|\varrho|)^{(n-3)p'/2-p'q} \, \mathrm{d}\varrho \\
& \lesssim  \langle A_k(t)\rangle ^{-\frac{n-1}{2}p'+n-1}
\end{align*} because of $(n-3)p'/2 -p'q<-1$.

 Combining this upper bound estimates for the integral of $\xi_q(t,t,x;k)^{p'}$, \eqref{lower bound int |u|^p intermediate} and using $t\approx\langle A_k(t)\rangle^{\frac{1}{1-k}}$ for $t\geqslant 1$, we arrive at \eqref{lower bound int |u|^p}. The proof is over.
\end{proof}

In Proposition \ref{Proposition iteration frame}, we derive the iteration frame for $\mathcal{U}$. In the next result, we shall prove a first lower bound estimate for $\mathcal{U}$, which shall be the base case of the inductive argument in Section \ref{Subsection iteration procedure}.

\begin{proposition} \label{Proposition first logarithmic lower bound mathcalU}  Suppose that the assumptions in Corollary \ref{Corollary fund ineq} are satisfied and let $q=(n-1)/2-1/p$. Let $\mathcal{U}$ be defined by \eqref{def functional mathcalU}. Then, for $t\geqslant 3/2$ the functional $\mathcal{U}(t)$ fulfills
\begin{align} \label{first logarithmic lower bound mathcalU}
\mathcal{U}(t) \geqslant M \varepsilon^p \log\left(\tfrac{2t}{3}\right),
\end{align} where  the positive constant $M$ depends on $u_0,u_1,n,p,R,k$.
\end{proposition}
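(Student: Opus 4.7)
The plan is to feed a pointwise lower bound on $\eta_q$ together with the $L^p$ lower bound on $u(s,\cdot)$ into the nonlinear term of the fundamental inequality from Corollary \ref{Corollary fund ineq}. Since $u_0,u_1\geqslant 0$, I would discard the (nonnegative) data contributions in \eqref{fundamental inequality functional mathcalU} to obtain
\[
t^{\frac{k}{2}}\mathcal{U}(t)\geqslant \int_1^t (\phi_k(t)-\phi_k(s))\, s^{1-p}\int_{\mathbb{R}^n}|u(s,x)|^p\,\eta_q(t,s,x;k)\,\mathrm{d}x\,\mathrm{d}s.
\]
Because $\mathrm{supp}\, u(s,\cdot)\subset B_{A_k(s)+R}$, the pointwise estimate $\eta_q(t,s,x;k)\gtrsim (st)^{k/2}\langle A_k(t)\rangle^{-1}\langle A_k(s)\rangle^{-q}$ from Lemma \ref{Lemma lower bound estimates xi q and eta q} holds on the full support of $|u(s,\cdot)|^p$, so it can be pulled out of the $x$--integral. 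Combining this with Lemma \ref{Lemma lower bound int |u|^p} and dividing by $t^{k/2}$ yields
\[
\mathcal{U}(t)\gtrsim \varepsilon^p\,\langle A_k(t)\rangle^{-1}\int_1^t (\phi_k(t)-\phi_k(s))\, s^{1-p+k/2}\,\langle A_k(s)\rangle^{-q+(n-1)(1-p/2)+kp/(2(1-k))}\,\mathrm{d}s.
\]

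The second step is a pure exponent bookkeeping. Using the equivalence $s\approx \langle A_k(s)\rangle^{1/(1-k)}$ for $s\geqslant 1$ and substituting $q=(n-1)/2-1/p$, the total exponent of $\langle A_k(s)\rangle$ in the integrand rewrites as
\[
-p\!\left(\tfrac{n-1}{2}+\tfrac{2-k}{2(1-k)}\right)+\left(\tfrac{n-1}{2}+\tfrac{2+k}{2(1-k)}\right)+\tfrac 1p,
\]
which by the critical exponent identity \eqref{equation critical exponent intermediate} is exactly $-1/(1-k)$. Hence $s^{1-p+k/2}\langle A_k(s)\rangle^{(\cdots)}\approx s^{-1}$ and the inequality collapses to
\[
\mathcal{U}(t)\gtrsim \varepsilon^p\,\langle A_k(t)\rangle^{-1}\int_1^t\frac{\phi_k(t)-\phi_k(s)}{s}\,\mathrm{d}s.
\]

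To extract the logarithm I would restrict the integration to $s\in [1,2t/3]$ (possible exactly when $t\geqslant 3/2$) and exploit the monotonicity of $\phi_k$:
\[
\int_1^t \frac{\phi_k(t)-\phi_k(s)}{s}\,\mathrm{d}s \geqslant \big(\phi_k(t)-\phi_k(2t/3)\big)\log(2t/3)=\big(1-(2/3)^{1-k}\big)\,\phi_k(t)\log(2t/3).
\]
Since $\phi_k(t)\gtrsim \langle A_k(t)\rangle$ uniformly for $t\geqslant 3/2$, the $\langle A_k(t)\rangle^{-1}$ factor in front of the integral is absorbed and the conclusion $\mathcal{U}(t)\geqslant M\varepsilon^p\log(2t/3)$ follows with a constant $M=M(u_0,u_1,n,p,R,k)>0$. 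The only real difficulty is keeping track of the many powers of $\langle A_k(s)\rangle$ and $s$ so that the critical relation \eqref{equation critical exponent intermediate} produces precisely the weight $s^{-1}$ that is needed to generate a logarithm; no new ingredient beyond the pointwise bounds of Section \ref{Subsection estimates auxiliary functions} and Lemma \ref{Lemma lower bound int |u|^p} is required.
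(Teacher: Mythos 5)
Your proposal is correct and follows essentially the same route as the paper's proof: drop the nonnegative data terms in \eqref{fundamental inequality functional mathcalU}, insert the lower bounds \eqref{lower bound eta q} and \eqref{lower bound int |u|^p}, and use $s\approx \langle A_k(s)\rangle^{1/(1-k)}$ together with the critical identity \eqref{equation critical exponent intermediate} to reduce matters to $\langle A_k(t)\rangle^{-1}\int_1^t \frac{\phi_k(t)-\phi_k(s)}{s}\,\mathrm{d}s$. The only (harmless) deviation is the last integral estimate: the paper integrates by parts to get $\int_1^t s^{-k}\log s \,\mathrm{d}s$ and then shrinks the domain to $[2t/3,t]$, whereas you shrink to $[1,2t/3]$ and bound $\phi_k(t)-\phi_k(s)\geqslant \big(1-(2/3)^{1-k}\big)\phi_k(t)$ by monotonicity --- both produce the factor $t^{1-k}\log(2t/3)$ that absorbs $\langle A_k(t)\rangle^{-1}$ for $t\geqslant 3/2$.
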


\begin{proof}
From \eqref{fundamental inequality functional mathcalU} we know that
\begin{align*}
\mathcal{U}(t) & \geqslant  t^{-\frac{k}{2}} \int_1^t  (\phi_k(t)-\phi_k(s)) \, s^{1-p}  \int_{\mathbb{R}^n} |u(s,x)|^p \eta_q(t,s,x;k) \, \mathrm{d}x \, \mathrm{d}s.
\end{align*}  Consequently, applying \eqref{lower bound eta q} first and then \eqref{lower bound int |u|^p}, we find
\begin{align*}
\mathcal{U}(t) & \geqslant B_1  \langle A_k(t)\rangle ^{-1} \int_1^t  (\phi_k(t)-\phi_k(s)) \, s^{1-p+\frac{k}{2}} \langle A_k(s)\rangle ^{-q} \int_{\mathbb{R}^n} |u(s,x)|^p  \, \mathrm{d}x \, \mathrm{d}s \\
& \geqslant B_1 K \varepsilon^p \langle A_k(t)\rangle ^{-1} \int_1^t  (\phi_k(t)-\phi_k(s)) \, s^{1-p+\frac{k}{2}}   \langle A_k(s)\rangle^{-q+(n-1)(1-\frac{p}{2})+\frac{kp}{2(1-k)}} \,  \mathrm{d}s \\
& \gtrsim \varepsilon^p \langle A_k(t)\rangle ^{-1} \int_1^t  (\phi_k(t)-\phi_k(s))    \langle A_k(s)\rangle^{(1-p+\frac{k}{2})\frac{1}{1-k}-\frac{n-1}{2}+\frac{1}{p}+(n-1)(1-\frac{p}{2})+\frac{kp}{2(1-k)}}  \, \mathrm{d}s \\
& \gtrsim  \varepsilon^p \langle A_k(t)\rangle ^{-1} \int_1^t  (\phi_k(t)-\phi_k(s))  \langle A_k(s)\rangle ^{-\left(\frac{n-1}{2}+\frac{2-k}{2(1-k)}\right)p+\left(\frac{n-1}{2}+\frac{2+k}{2(1-k)}\right)+\frac 1p}    \, \mathrm{d}s \\
& \gtrsim  \varepsilon^p \langle A_k(t)\rangle ^{-1} \int_1^t  (\phi_k(t)-\phi_k(s))  \langle A_k(s)\rangle ^{-\frac{1}{1-k}}    \, \mathrm{d}s  \gtrsim  \varepsilon^p \langle A_k(t)\rangle ^{-1} \int_1^t  \frac{\phi_k(t)-\phi_k(s)}{s} \, \mathrm{d}s. 
\end{align*} We estimate now the integral  in the right -- hand side of the previous chain of inequalities. Integration by parts leads to 
\begin{align*}
 \int_1^t  \frac{\phi_k(t)-\phi_k(s)}{s} \, \mathrm{d}s &  =  \big(\phi_k(t)-\phi_k(s)\big)\log s \, \Big|^{s=t}_{s=1} +\int_1^t  \phi_k'(s) \log s \, \mathrm{d}s \\ &= \int_1^t s^{-k} \log s \, \mathrm{d}s \geqslant t^{-k} \int_1^t  \log s \, \mathrm{d}s.
\end{align*} Therefore, for $t\geqslant 3/2$ 
\begin{align*}
\mathcal{U}(t) &   \gtrsim  \varepsilon^p \langle A_k(t)\rangle ^{-1} t^{-k} \int_1^t  \log s \, \mathrm{d}s \geqslant \varepsilon^p \langle A_k(t)\rangle ^{-1} t^{-k} \int_{2t/3}^t  \log s \, \mathrm{d}s \geqslant (1/3) \varepsilon^p \langle A_k(t)\rangle ^{-1} t^{1-k}   \log (2t/3) \\
& \gtrsim  \varepsilon^p  \log (2t/3) ,
\end{align*} where in the last line we employed $t\approx \langle A_k(t)\rangle ^{\frac{1}{1-k}}$ for $t\geqslant 1$. Also, the proof is complete.
\end{proof}

\subsection{Iteration argument} \label{Subsection iteration procedure}

In this section we prove the blow -- up result. More specifically, we are going to show a sequence of lower bound estimates for $\mathcal{U}$ and from these lower bound estimates we conclude that for $t$ over a certain $\varepsilon$~--~dependent threshold the functional $\mathcal{U}(t)$ may not be finite. 

Our goal is to show the validity of the sequence of lower bound estimates
\begin{align} \label{lower bound mathcalU j step}
\mathcal{U}(t)\geqslant C_j \big(\log\langle A_k(t)\rangle \big)^{-\beta_j} \left(\log \left(\frac{t}{\ell_j}\right)\right)^{\alpha_j} \qquad \mbox{for} \ t\geqslant \ell_j
\end{align}  for any $j\in \mathbb{N}$, where the bounded sequence of parameters characterizing the slicing procedure is $\{\ell_j\}_{j\in\mathbb{N}}$ with $\ell_j\doteq 2-2^{-(j+1)}$ and $\{C_j\}_{j\in\mathbb{N}},\{\alpha_j\}_{j\in\mathbb{N}},\{\beta_j\}_{j\in\mathbb{N}}$ are sequences of positive numbers that we will determine throughout the iteration argument.

In order to show \eqref{lower bound mathcalU j step}, we apply an inductive argument. As we have already said, the crucial idea here is to apply a slicing procedure for the domain of integration in the iteration frame \eqref{iteration frame}, in order to increase the power of the second logarithmic term in \eqref{lower bound mathcalU j step} step by step. This idea was introduced for the first time in \cite{AKT00} and since then it has been applied successfully to study the blow -- up dynamic of semilinear wave models in critical cases, overcoming the difficulties in the application of Kato's lemma for critical cases.

Since \eqref{lower bound mathcalU j step} is true in the base case $j=0$, provided that $C_0\doteq M\varepsilon^p$ and $\alpha_0\doteq 1, \beta_0=0$ (cf. Proposition \ref{Proposition first logarithmic lower bound mathcalU}), it remains to prove the inductive step. We assume \eqref{lower bound mathcalU j step} true for $j\geqslant 0$ and we have to prove it for $j+1$. Plugging \eqref{lower bound mathcalU j step} in \eqref{iteration frame}, for $t\geqslant \ell_{j+1}$ we get
\begin{align*}
\mathcal{U}(t) & \geqslant C C_j^p \langle A_k(t)\rangle^{-1}\int_{\ell_j}^t \frac{\phi_k(t)-\phi_k(s)}{s} \big(\log \langle A_k(s)\rangle\big)^{-(p-1) -\beta_jp} \left(\log \left(\tfrac{s}{\ell_j}\right)\right)^{\alpha_j p} \, \mathrm{d}s \\
& \geqslant C C_j^p \langle A_k(t)\rangle^{-1} \big(\log \langle A_k(t)\rangle\big)^{-(p-1) -\beta_jp}\int_{\ell_j}^t \frac{\phi_k(t)-\phi_k(s)}{s}  \left(\log \left(\tfrac{s}{\ell_j}\right)\right)^{\alpha_j p} \, \mathrm{d}s.
\end{align*} Using integration by parts, we find
\begin{align*}
& \int_{\ell_j}^t \frac{\phi_k(t)-\phi_k(s)}{s}  \left(\log \left(\tfrac{s}{\ell_j}\right)\right)^{\alpha_j p} \, \mathrm{d}s & \\ & \quad =  \big(\phi_k(t)-\phi_k(s)\big)   (\alpha_jp+1)^{-1}\left(\log \left(\tfrac{s}{\ell_j}\right)\right)^{\alpha_j p+1} \, \Big|^{s=t}_{s=\ell_j} +  (\alpha_j p+1)^{-1}  \int_{\ell_j}^t \phi'_k(s) \left(\log \left(\tfrac{s}{\ell_j}\right)\right)^{\alpha_j p+1}  \, \mathrm{d}s \\
& \quad =  (\alpha_j p+1)^{-1}  \int_{\ell_j}^t s^{-k} \left(\log \left(\tfrac{s}{\ell_j}\right)\right)^{\alpha_j p+1}  \, \mathrm{d}s  \geqslant  (\alpha_j p+1)^{-1} t^{-k} \int_{\ell_j}^t  \left(\log \left(\tfrac{s}{\ell_j}\right)\right)^{\alpha_j p+1}  \, \mathrm{d}s  \\
& \quad \geqslant  (\alpha_j p+1)^{-1} t^{-k} \int_{\tfrac{\ell_j t}{\ell_{j+1}}}^t  \left(\log \left(\tfrac{s}{\ell_j}\right)\right)^{\alpha_j p+1}  \, \mathrm{d}s \geqslant  (\alpha_j p+1)^{-1} t^{1-k} \left(1-\tfrac{\ell_j}{\ell_{j+1}}\right)   \left(\log \left(\tfrac{t}{\ell_{j+1}}\right)\right)^{\alpha_j p+1}   \\
& \quad \geqslant  (\alpha_j p+1)^{-1} 2^{-(j+3)} \gamma_k \langle A_k(t)\rangle   \left(\log \left(\tfrac{t}{\ell_{j+1}}\right)\right)^{\alpha_j p+1}  ,
\end{align*} where in the last step we applied $1-\ell_j/\ell_{j+1}>2^{-(j+3)}$ and $t^{1-k}\geqslant \gamma_k \langle A_k(t)\rangle $ for $t\geqslant 1$ with 
\begin{align*}
\gamma_k \doteq \begin{cases}  1/3 & \mbox{if} \ \, k\in[0,2/3], \\ (1-k) & \mbox{if} \ \, k\in[2/3,1).\end{cases}
\end{align*}
Therefore,
\begin{align*}
\mathcal{U}(t) & \geqslant C \gamma_k \, 2^{-(j+3)}  (\alpha_j p+1)^{-1}  C_j^p \big(\log \langle A_k(t)\rangle\big)^{-(p-1) -\beta_jp}   \left(\log \left(\tfrac{t}{\ell_{j+1}}\right)\right)^{\alpha_j p+1}
\end{align*} for $t\geqslant \ell_{j+1}$, that is, we proved \eqref{lower bound mathcalU j step} for $j+1$, provided that
\begin{align*}
C_{j+1} \doteq  C \gamma_k \, 2^{-(j+3)}  (\alpha_j p+1)^{-1}  C_j^p, \quad \alpha_{j+1}\doteq 1+p \alpha_j, \quad  \beta_{j+1}\doteq p-1 +p \beta_j.
\end{align*}

Next we establish a lower bound estimate for $C_j$. For this purpose, we provide first an explicit representation of the exponents $\alpha_j$ and $\beta_j$. Employing recursively the relations $\alpha_j=1+p\alpha_{j-1}$ and $\beta_j= (p-1) + p \beta_{j-1}$ and the initial exponents $\alpha_0=1$, $\beta_0=0$, we obtain 
\begin{align}\label{explit expressions aj and bj}
\alpha_j & = \alpha_0 p^j +\sum_{k=0}^{j-1} p^k =  \tfrac{p^{j+1}-1}{p-1} \quad \mbox{and} \quad
\beta_j = p^j \beta_0 +(p-1) \sum_{k=0}^{j-1} p^k = p^j-1.
\end{align} In particular, $\alpha_{j-1}p+1= \alpha_j\leqslant p^{j+1}/(p-1) $ implies that 
\begin{align} \label{lower bound Mj no.1}
C_j \geqslant D\, (2 p)^{-j} C^p_{j-1}
\end{align} for any $j\geqslant 1$, where $D\doteq {2^{-2}} C \gamma_k (p-1)/p$. Applying the logarithmic function to both sides of  \eqref{lower bound Mj no.1} and using iteratively the resulting inequality, we find
\begin{align*}
\log C_j & \geqslant p \log C_{j-1} -j \log (2p)+\log D \\
& \geqslant \ldots \geqslant p^j \log C_0 -\Bigg(\sum_{k=0}^{j-1}(j-k)p^k \Bigg)\log (2p)+\Bigg(\sum_{k=0}^{j-1} p^k \Bigg)\log D  \\
& = p^j \left(\log M \varepsilon^p -\frac{p\log (2p)}{(p-1)^2}+\frac{\log D }{p-1}\right)+\left( \frac{j}{p-1}+\frac{p}{(p-1)^2}\right)\log (2p)-\frac{\log D}{p-1},
\end{align*} where we used the identity 
\begin{align} \label{identity sum (j-k)p^k}
\sum\limits_{k=0}^{j-1}(j-k)p^k = \frac{1}{p-1}\left(\frac{p^{j+1}-p}{p-1}-j\right).
\end{align} Let us define $j_0=j_0(n,p,k)$ as the smallest nonnegative integer such that $$j_0\geqslant \frac{\log D}{\log (2p)}-\frac{p}{p-1}.$$ Hence, for any $j\geqslant j_0$ we have the estimate 
\begin{align} \label{lower bound Mj no.2}
\log C_j & \geqslant  p^j \left(\log M \varepsilon^p -\frac{p\log (2p)}{(p-1)^2}+\frac{\log D}{p-1}\right) = p^j \log (E \varepsilon^p),
\end{align} where $E\doteq M (2p)^{-p/(p-1)^2}D^{1/(p-1)}$. 

Combining \eqref{lower bound mathcalU j step}, \eqref{explit expressions aj and bj} and \eqref{lower bound Mj no.2}, we arrive at
\begin{align*}
\mathcal{U}(t)&\geqslant \exp \left( p^j\log(E\varepsilon^p)\right) \left(\log\langle A_k(t)\rangle \right)^{-\beta_j} \left(\log \left(\tfrac t2\right)\right)^{\alpha_j} \\
&= \exp \left( p^j\log(E\varepsilon^p)\right) \left(\log\langle A_k(t)\rangle \right)^{-p^j+1} \left(\log \left(\tfrac t2\right)\right)^{(p^{j+1}-1)/(p-1)} 
\\
&= \exp \left( p^j\log\left(E\varepsilon^p \left(\log\langle A_k(t)\rangle\right)^{-1}\left(\log \left(\tfrac t2\right)\right)^{p/(p-1)}\right) \right) \log\langle A_k(t)\rangle  \left(\log \left(\tfrac t2\right)\right)^{-1/(p-1)}
\end{align*} for $t\geqslant 2$ and any $j\geqslant j_0$.
Since for $t\geqslant t_0(k) \doteq \max\big\{4,\gamma_k^{-1/k}\big\}$ the inequalities $$\log\langle A_k(t)\rangle \leqslant (1-k) \log t-\log \gamma_k \leqslant \log t \quad \mbox{and} \quad \log (\tfrac t2)\geqslant 2^{-1} \log t $$ hold true, then,
\begin{align}
\mathcal{U}(t)&\geqslant  \exp \left( p^j\log\left(2^{-p/(p-1)}E\varepsilon^p \left(\log t\right)^{1/(p-1)}\right) \right) \log\langle A_k(t)\rangle  \left(\log \left(\tfrac t2\right)\right)^{-1/(p-1)} \label{final lower bound G}
\end{align} for $t\geqslant t_0$ and any $j\geqslant j_0$. Let us denote $J(t,\varepsilon)\doteq 2^{-p/(p-1)}E\varepsilon^p \left(\log t\right)^{1/(p-1)}$.

 If we choose $\varepsilon_0=\varepsilon_0(n,p,k,\lambda_0,R,u_0,u_1)$ sufficiently small so that
 \begin{align*}
 \exp \left(2^{p}E^{1-p}\varepsilon_0^{-p(p-1)}\right)\geqslant t_0,
 \end{align*}
 then, for any $\varepsilon\in (0,\varepsilon_0]$ and for $t> \exp \left(2^{p}E^{1-p}\varepsilon^{-p(p-1)}\right)$ we get $t\geqslant t_0$ and $J(t,\varepsilon)>1$. Consequently, for any $\varepsilon\in (0,\varepsilon_0]$ and for $t> \exp \left(2^{p}E^{1-p}\varepsilon^{-p(p-1)}\right)$ letting $j\to \infty$ in \eqref{final lower bound G} it results that the lower bound for $\mathcal{U}(t)$ blows up; hence, $\mathcal{U}(t)$ is not finite as well. Also, we showed  that $\mathcal{U}$ blows up in finite time and, moreover, we proved the upper bound estimate for the lifespan $$T(\varepsilon)\leqslant \exp \left(2^{p}E^{1-p}\varepsilon^{-p(p-1)}\right).$$ 
Therefore, we completed the proof of Theorem \ref{Theorem critical case p0}.

\section{Semilinear wave equation in EdeS spacetime: subcritical case} \label{Section subcritical case}

As byproduct of the approach developed in Section \ref{Section critical case p0}, we derive in this section the upper bound estimates for the lifespan of local in time solutions in the subcritical case $1<p<\max\{p_0(n,k),p_1(n,k)\}$. Our main tool will be the generalization of Kato's lemma containing the upper bound estimates for the lifespan proved in \cite{Tak15}, whose statement is recalled below for the ease of the reader.

\begin{lemma} \label{Kato's lemma} Let $p>1$, $a>0$, $q>0$ satisfy $$M\doteq \frac{p-1}{2}a-\frac{q}{2}+1>0.$$ Assume that $F\in \mathcal{C}^2([\tau,T))$ satisfies
\begin{align}
 & F(t)  \geqslant A t^a  \  \  \, \qquad \qquad  \qquad \qquad \mbox{for} \ \ t\geqslant T_0\geqslant \tau , \label{F lower bound Kato lemma} \\
 & F''(t)  \geqslant B (t+R)^{-q}|F(t)|^{p} \qquad \mbox{for} \ \ t\geqslant \tau, \label{F'' lower bound Kato lemma} \\
&  F(\tau)  \geqslant 0,  \ \ F'(\tau)>0, \label{F(0), F'(0) conditions Kato lemma}
\end{align} where $A,B,R,T_0$ are positive constants. Then, there exists a positive constant $C_0=C_0(p,a,q,B,\tau)$ such that 
\begin{align} 
T< 2^{\frac{2}{M}}T_1 \label{upper bound T Kato lemma}
\end{align} 
 holds, provided that
\begin{align}
T_1\doteq \max\left\{T_0,\frac{F(\tau)}{F'(\tau)},R\right\} \geqslant C_0 A^{-\frac{p-1}{2M}}. \label{lower bound T1 Kato lemma}
\end{align}
\end{lemma}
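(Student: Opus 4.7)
The plan follows the classical Kato iteration method: use the differential inequality, repeatedly integrated twice, together with the given polynomial lower bound to generate a sequence of improving lower bounds $F(t)\geq D_j(t-T_1)^{a_j}$ whose exponents $a_j$ grow geometrically, eventually forcing blow-up at a time controlled by $T_1$.

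First I would establish positivity and monotonicity. Since $F(\tau)\geq 0$ and the right-hand side of \eqref{F'' lower bound Kato lemma} is nonnegative, $F''\geq 0$; combined with $F'(\tau)>0$ this yields $F'(t)\geq F'(\tau)>0$ on $[\tau,T)$, so $F$ is strictly increasing and strictly positive for $t>\tau$. By the definition of $T_1$, for $t\geq T_1$ we simultaneously have $t\geq T_0$ (so the polynomial bound $F(t)\geq A t^a$ applies), $t\geq R$ (so $t+R\leq 2t$), and $t\geq F(\tau)/F'(\tau)$ (giving enough room to shift the origin to $T_1$ with only a harmless constant loss). The base case $a_0=a$, $D_0\approx A$ of the iteration on $[2T_1,T)$ follows.

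The inductive step: assuming $F(t)\geq D_j(t-T_1)^{a_j}$ on $[2T_1,T)$, I insert this bound into \eqref{F'' lower bound Kato lemma}, use $(s+R)^{-q}\geq (2s)^{-q}$, and integrate twice from $2T_1$ to $t$, discarding the nonnegative boundary values $F(2T_1)$ and $F'(2T_1)$. Exploiting $s\leq 2(s-T_1)$ for $s\geq 2T_1$, one obtains
\[
F(t)\;\geq\; \frac{B\,D_j^{p}}{2^{q}(a_j p+1)(a_j p+2)}\,(t-T_1)^{a_j p-q+2}\cdot \kappa,
\]
with $\kappa$ an absolute numerical factor. This yields the recursion $a_{j+1}=p\,a_j-q+2$ and $D_{j+1}\geq c\,D_j^{p}/a_j^{2}$. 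Setting $b_j\doteq a_j-(q-2)/(p-1)$ linearises the first recursion to $b_{j+1}=p\,b_j$, and the hypothesis $M>0$ is exactly $b_0=a-(q-2)/(p-1)=2M/(p-1)>0$, so $a_j\to\infty$. Iterating the logarithm of the $D_j$ recursion produces $\log D_j\geq p^j\log(c_1 A)-c_2$ for constants depending only on $p,q,B,R,\tau$.

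Combining these estimates gives
\[
\log F(t)\;\geq\; p^j\,\log\!\bigl[c_1 A(t-T_1)^{b_0}\bigr]-c_2+\tfrac{q-2}{p-1}\log(t-T_1).
\]
If $c_1 A(t-T_1)^{b_0}>1$ the right-hand side diverges as $j\to\infty$, forcing $T\leq t$. This condition translates to $t-T_1\geq c_3 A^{-1/b_0}=c_3 A^{-(p-1)/(2M)}$, so the hypothesis $T_1\geq C_0 A^{-(p-1)/(2M)}$, with $C_0$ chosen so that $(2^{2/M}-1)T_1$ exceeds $c_3 A^{-(p-1)/(2M)}$, allows one to take $t=2^{2/M}T_1$, yielding the claimed bound $T<2^{2/M}T_1$. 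The main obstacle is the careful bookkeeping of multiplicative constants through the recursion for $D_j$, particularly the quadratic-in-$a_j$ denominator and the factor $2^{q}$ from $t+R\leq 2t$, which together pin down the precise exponent $2/M$ and the threshold $C_0$; a crude estimate would only yield the correct exponential order of the lifespan but with an incorrect constant.
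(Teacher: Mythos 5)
First, a point of orientation: the paper does not prove this lemma at all --- it is recalled verbatim from Takamura \cite{Tak15} ``for the ease of the reader'', and the proof there is by the first-integral method: multiply \eqref{F'' lower bound Kato lemma} by $F'>0$, integrate once using the monotonicity of $(s+R)^{-q}$ to obtain $F'(t)\gtrsim B^{1/2}(t+R)^{-q/2}F(t)^{(p+1)/2}$ beyond a suitable time, then separate variables in this first-order inequality. The sharp factor $2^{2/M}$ and the dependence $C_0=C_0(p,a,q,B,\tau)$ fall out of that computation directly. Your iteration scheme (geometric growth of exponents $a_{j+1}=pa_j-q+2$, linearized by $b_j=a_j-(q-2)/(p-1)$ with $b_0=2M/(p-1)$) is a genuinely different and classical route, and its skeleton correctly reproduces the blow-up threshold $t-T_1\gtrsim A^{-(p-1)/(2M)}$, i.e.\ the right order of the lifespan bound.

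However, it does not prove the lemma as stated, for two concrete reasons. The serious one is structural: all your iterated lower bounds $F(t)\geqslant D_j(t-T_1)^{a_j}$ are established only for $t\geqslant 2T_1$ (indeed, with center $T_1$ and base point $2T_1$, the inner integral is $\int_{2T_1}^{\sigma}(s-T_1)^{\alpha}\,\mathrm{d}s=\big((\sigma-T_1)^{\alpha+1}-T_1^{\alpha+1}\big)/(\alpha+1)$, which costs a further slice, say $t\geqslant 3T_1$, to retain a fixed fraction of $(\sigma-T_1)^{\alpha+1}$), so you can only certify blow-up at times $\geqslant 2T_1$. Nothing in the hypotheses prevents $M>2$ (e.g.\ $p=3$, $a=4$, $q=1$ gives $M=9/2$), in which case $2^{2/M}T_1<2T_1$ lies strictly inside the region where your bounds were never derived: your final step ``take $t=2^{2/M}T_1$'' is then vacuous, and your argument yields at best $T<\max\{2T_1,\,T_1+c_3A^{-(p-1)/(2M)}\}$ --- the correct order, but not the stated constant. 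A repair within your framework requires slicing at $\ell_jT_1$ with $\ell_j\uparrow 1+\epsilon$ and absorbing the $\epsilon$-degeneration of the constants into $C_0$ (admissible, since $C_0$ may depend on $p,a,q$), which is exactly the bookkeeping the first-integral proof avoids. The second gap concerns low exponents: your recursion tacitly assumes the double integration of $(s-T_1)^{a_jp-q}$ produces $(t-T_1)^{a_jp-q+2}$, which requires $a_jp-q>-1$; since $b_0=2M/(p-1)$ may be small, finitely many steps can have $a_jp-q\leqslant -1$ (e.g.\ $p=2$, $q=2$, $a=1/10$ gives $a_0p-q=-9/5$ already at the first step), where the inner integral saturates at a constant and the stated step fails, so these steps need separate treatment before the clean recursion can start. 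Relatedly, the denominator should read $(a_jp-q+1)(a_jp-q+2)$ rather than $(a_jp+1)(a_jp+2)$ --- the replacement is harmless only when the former is positive, which is precisely the issue. Your linearization, the identity $b_0=2M/(p-1)$, and the threshold $A^{-(p-1)/(2M)}$ are all correct; the gaps lie in the constant $2^{2/M}$ and in the small-exponent steps.
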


As we are going to apply this generalization of Kato's lemma, we will find some estimates already obtained in \cite{GalYag17EdS} in the treatment of the subcritical case, although the proofs that lead to these estimates are different.

Let us assume that $u_0,u_1$ are compactly supported with supports in $B_R$ for some $R>0$, nonnegative and nontrivial functions. Let $u$ be a solution on $[1,T)$ of \eqref{Semi EdeS k} according to Definition \ref{Def energy sol} such that $$\supp u(t,\cdot) \subset B_{R+A_k(t)}$$ for any $t\in (1,T)$, where $T=T(\varepsilon)$ is the lifespan of $u$.

Hence, we introduce as time -- dependent functional the spatial average of $u$
\begin{align} \label{def U}
\mathrm{U}(t) \doteq \int_{\mathbb{R}^n} u(t,x) \, \mathrm{d}x.
\end{align} Choosing a test function $\psi$ such that $\psi = 1$ on $\{(s,x)\in [1,t]\times \mathbb{R}^n: |x|\leqslant R+A_k(s) \}$ in \eqref{integral identity def energy sol}, we get
\begin{align*}
\mathrm{U}'(t) = \mathrm{U}'(1) +\int_1^t \int_{\mathbb{R}^n} s^{1-p} |u(s,x)|^p \, \mathrm{d}x \, \mathrm{d}s.
\end{align*} Also, differentiating the previous identity with respect to $t$, it results
\begin{align} \label{equality U''}
\mathrm{U}''(t) = t^{1-p}\int_{\mathbb{R}^n} |u(t,x)|^p \, \mathrm{d}x.
\end{align} By using the support condition for $u$ and H\"older's inequality, from the above inequality we obtain
\begin{align}
\mathrm{U}''(t) & \gtrsim t^{1-p} (R+A_k(t))^{-n(p-1)} |\mathrm{U}(t)|^p  \notag \\ &\gtrsim  (R+t)^{-((1-k)n+1)(p-1)} |\mathrm{U}(t)|^p \label{differential ineq U''} 
\end{align} for any $t\in(1,T)$.

Let us derive now two estimates from below for $\mathrm{U}$.
On the one hand, thanks to the convexity of $\mathrm{U}$, we have immediately
\begin{align} \label{U > epsilon}
\mathrm{U}(t)\geqslant \mathrm{U}(1)+ (t-1) \mathrm{U}'(1) \gtrsim \varepsilon t
\end{align} for any $t\in (1,T)$, where we used that $u_0, u_1$ are nonnegative and nontrivial in the unexpressed multiplicative constant. Plugging \eqref{U > epsilon} in \eqref{differential ineq U''} and integrating twice, we get
\begin{align}\label{lower bound U convex}
\mathrm{U}(t)\gtrsim \varepsilon^p t^{-((1-k)n+1)(p-1) +p+2}
\end{align}
for any $t\in [T_0,T)$, where $T_0>1$. The first lower bound estimate for $\mathrm{U}$ in \eqref{lower bound U convex} has been obtained from the convexity of  $\mathrm{U}$. On the other hand, from Lemma \ref{Lemma lower bound int |u|^p} and \eqref{equality U''}, integrating twice, we find a second lower bound estimate for $U$, that is,
\begin{align} \label{lower bound U int |u|^p}
\mathrm{U}(t)\gtrsim \varepsilon^p t^{(1-k)(n-1)(1-\frac{p}{2})+\frac{kp}{2} +1-p+2}
\end{align} for any $t\in [T_0,T)$.

Next we apply Lemma \ref{Kato's lemma} to the functional $\mathrm{U}$. Since $u_0,u_1$ are nonnegative and nontrivial we have $\mathrm{U}(1),\mathrm{U}'(1)>0$, so \eqref{F(0), F'(0) conditions Kato lemma} is fulfilled. Moreover, \eqref{differential ineq U''} corresponds to \eqref{F'' lower bound Kato lemma} with $q\doteq((1-k)n+1)(p-1)$. Finally, combining \eqref{lower bound U convex} and \eqref{lower bound U int |u|^p} we have \eqref{F lower bound Kato lemma} with $a= \max\{a_1,a_2\}$,  where 
\begin{align*}
a_1 &\doteq -((1-k)n+1)(p-1) +p+2, \\ a_2 & \doteq  (1-k)(n-1)(1-\tfrac{p}{2})+\tfrac{kp}{2} +1-p+2
\end{align*}  and $A\approx \varepsilon^p$.
According to this choice we have two possible value for the quantity $M$ in Lemma \ref{Kato's lemma}: either we use \eqref{lower bound U convex}, that is, $a=a_1$ and, consequently,
\begin{align*}
M_1\doteq \tfrac{p-1}{2} a_1 - \tfrac q2 +1 = \tfrac p2 \left [ -(1-k)n (p-1) +2\right]
\end{align*} or we use \eqref{lower bound U int |u|^p}, that is, $a=a_2$ and, then,
\begin{align*}
M_2\doteq \tfrac{p-1}{2} a_2 - \tfrac q2 +1 = \tfrac 12 \left \{ -\left[(1-k) \tfrac{n-1}{2} +1-\tfrac{k}{2}\right] p^2+\left[ (1-k) \tfrac{n+1}{2}+1+\tfrac{3k}{2} \right]p+1-k\right\}.
\end{align*} Therefore, for $M\doteq \max\{M_1,M_2\}>0$ Lemma \ref{Kato's lemma} provides a blow -- up result and the upper bound estimate for the lifespan $$T\lesssim \varepsilon ^{-\frac{p(p-1)}{2M}}.$$ Let us make the condition $M>0$ more explicit. The condition $M_1>0$ is equivalent to $p<p_1(n,k)$, while the condition $M_2>0$ is equivalent to $p<p_0(n,k)$. Hence, Lemma \ref{Kato's lemma} implies the validity of a blow -- up result for \eqref{Semi EdeS k} in the subcritical case $1<p< \max\{p_0(n,k),p_1(n,k)\}$ (exactly as in \cite{GalYag17EdS}) and the upper bound estimates for  the lifespan
\begin{align} \label{lifespan estimate subcrit case}
T(\varepsilon) \lesssim  \begin{cases} \varepsilon ^{- \left(\frac{2}{p-1}-(1-k) n\right)^{-1}} & \mbox{if} \ p<p_1(n,k), \\ \varepsilon ^{- \frac{p(p-1)}{\theta(p,n,k)}} & \mbox{if} \ p<p_0(n,k),\end{cases}
\end{align} where 
\begin{align}\label{def theta}
\theta(p,n,k) \doteq 1-k +\left[ (1-k) \tfrac{n+1}{2}+1+\tfrac{3k}{2} \right]p  -\left[(1-k) \tfrac{n-1}{2} +1-\tfrac{k}{2}\right] p^2.
\end{align}

Furthermore, we point out that $a > 1$ (so, in particular, $a>0$ as it is required in the assumptions of Lemma \ref{Kato's lemma}) if and only if $1<p <\max\{p_1(n,k),p_2(n,k)\}$, where
\begin{align*}
p_2(n,k)\doteq 2+\frac{2k}{(1-k)n+1}.
\end{align*} We want to show now that the condition $a> 1$ is always fulfilled whenever $M>0$ holds.
For this purpose, we shall determine how to order the exponents  $p_0,p_1,p_2$. Since $p_0(n,k)$ is defined through \eqref{intro equation critical exponent general case}, the inequality $p_0(n,k)>p_1(n,k)$ holds if and only if 
\begin{align*}
& \left((1-k)n +1\right)p_1(n,k)^2- \left((1-k)n +3+2k\right)p_1(n,k) -2(1-k)<0. 
\end{align*} By straightforward computations it follows that the last inequality is fulfilled if and only if $ n> N(k)$, where $N(k)$ is defined in \eqref{def N(k)}. Similarly, $p_0(n,k)>p_2(n,k)$  if and only if $n< N(k)$.
Summarizing,
\begin{equation}\label{order p0,p1,p2}
\begin{split}
p_2(n,k)<p_0(n,k)<p_1(n,k) & \qquad \mbox{if} \ \ n<N(k), \\
p_0(n,k)=p_1(n,k)=p_2(n,k) & \qquad \mbox{if} \ \ n=N(k), \\
p_1(n,k)<p_0(n,k)<p_2(n,k) & \qquad \mbox{if} \ \ n>N(k). 
\end{split}
\end{equation} Consequently, for $n\geqslant N(k)$ the critical condition is $p=p_0(n,k)$, so if $p< p_0(n,k)$, in particular, the condition $p< p_2(n,k)$ is fulfilled (that is, $M_2>0$ implies $a_2>1$). On the other hand, for $n < N(k)$ it holds $p_2(n,k)<p_1(n,k)$ and the condition $M_1>0$ and $a_1>1$ are both equivalent to $p<p_1(n,k)$ (the critical condition is $p=p_1(n,k)$ in this case). Therefore, we actually proved that $M>0$ implies $a>1$.
 
\begin{remark} In \cite{GalYag17EdS} the condition in the subcritical case on $p$ under which a blow -- up result holds for \eqref{Semi EdeS k} is written in a slightly different but equivalent way. Indeed, combining \cite[Equation (1.9)]{GalYag17EdS} with \eqref{order p0,p1,p2}, we see immediately that the condition for $p$ in \cite[Theorem 1.3]{GalYag17EdS} is satisfied if and only if $1<p<\max\{p_1(n,k),p_0(n,k)\}$.
\end{remark}

 Finally, we want to compare the upper bound estimates for the lifespan in \eqref{lifespan estimate subcrit case}. Clearly, the estimates
 \begin{align*}
T(\varepsilon) \lesssim  \begin{cases} \varepsilon ^{- \left(\frac{2}{p-1}-(1-k) n\right)^{-1}} & \mbox{if} \ n<N(k) \ \mbox{and} \ p\in [p_0(n,k),p_1(n,k)), \\ \varepsilon ^{- \frac{p(p-1)}{\theta(p,n,k)}} & \mbox{if}  \ n>N(k) \ \mbox{and} \ p\in [p_1(n,k),p_0(n,k)),\end{cases}
 \end{align*} cannot be improved because it holds either $p\geqslant p_0(n,k)$ or $p\geqslant p_1(n,k)$. Note that $p_2(n,k)$ plays no role in the determination of the upper bound estimate for the lifespan. 
 
  However, in the case $1<p<\min\{p_0(n,k),p_1(n,k)\}$ it is not clear which of the upper bounds in \eqref{lifespan estimate subcrit case} is better. Of course, in this case we have to compare $a_1$ and $a_2$. A straightforward computation shows that $a_1\geqslant a_2$ if and only if 
  \begin{align} \label{inequality p3}
  ((1-k)n-1)p\leqslant 2(1-k). 
  \end{align} If $n\leqslant \widetilde{N}(k)\doteq 1/(1-k)$, then, the previous inequality is always true. On the other hand, for $n>\widetilde{N}(k)$ we may introduce the further exponent $$p_3(n,k)\doteq \frac{2(1-k)}{(1-k)n-1}.$$ It turns out that $p_3(n,k)>1$ if an only if $\widetilde{N}(k) < n< \widehat{N}(k)\doteq 2+1/(1-k)$. Moreover, for $n>\widetilde{N}(k)$ the inequalities  $p_1(n,k)<p_3(n,k)$ and  $p_0(n,k)<p_3(n,k)$ are both satisfied if and only if $n< N(k)$. 
  
  In order to clarify the upper bound estimates in \eqref{lifespan estimate subcrit case}, we shall consider five different subcases depending on the range for the spatial dimension $n$.
  
\subsubsection*{Case $n\leqslant \widetilde{N}(k)$ }

In this case, \eqref{inequality p3} is always satisfied as the left -- hand side is nonpositive. So, $a_1\geqslant a_2$. Therefore, for any $1<p<p_1(n,k)$ the following upper bound estimate holds
\begin{align} \label{lifespan estimate subcrit a1}
T(\varepsilon) \lesssim \varepsilon ^{- \left(\frac{2}{p-1}-(1-k) n\right)^{-1}}.
\end{align} 

\subsubsection*{Case $\widetilde{N}(k) < n <N(k)$ } In this case, \eqref{inequality p3} is satisfied for $p\leqslant p_3$. Hence, by the ordering $1<p_0(n,k) <p_1(n,k) <p_3(n,k)$, we get that $a_1>a_2$ for exponents satisfying $1<p<p_1(n,k)$. Also, even in this case \eqref{lifespan estimate subcrit a1} is a better estimates than $T(\varepsilon) \lesssim \varepsilon ^{- \frac{p(p-1)}{\theta(p,n,k)}}$. 

\subsubsection*{Case $n = N(k)$ } In this limit case, $p_0(n,k) =p_1(n,k) =p_3(n,k)$. So, for $1<p<p_1(n,k)=p_3(n,k)$ it holds $a_1>a_2$ and as in the previous case \eqref{lifespan estimate subcrit a1} is the best estimate.

\subsubsection*{Case $N(k) < n < \widehat{N}(k)$ } In this case, it results $1<p_3(n,k) <p_1(n,k)<p_0(n,k) $. So, for $1<p\leqslant p_3(n,k)$ it holds $a=a_1$, while for $p_3(n,k) <p<p_0(n,k)$ we have $a=a_2$. Therefore,
\begin{align*}
T(\varepsilon) \lesssim  \begin{cases} \varepsilon ^{- \left(\frac{2}{p-1}-(1-k) n\right)^{-1}} & \mbox{if} \ \ p\in (1, p_3(n,k)], \\ \varepsilon ^{- \frac{p(p-1)}{\theta(p,n,k)}} & \mbox{if}  \ \ p\in (p_3(n,k),p_0(n,k)).\end{cases}
\end{align*}

\subsubsection*{Case $ n \geqslant \widehat{N}(k)$ } In this case, $p_3(n,k)\leqslant 1$ and $1<p_1(n,k)<p_0(n,k)$ so \eqref{inequality p3} is never satisfied for $p>1$. Hence, $a_2>a_1$ for any $1<p<p_0(n,k)$, that is,
\begin{align*}
T(\varepsilon) \lesssim \varepsilon ^{- \frac{p(p-1)}{\theta(p,n,k)}}
\end{align*} is a better estimate than \eqref{lifespan estimate subcrit a1}.

\subsection{Lifespan estimates in the subcritical case}

Summarizing, what we established in the above subcases, we proved the following proposition, that completes \cite[Theorem 1.3]{GalYag17EdS} with   the estimate for the lifespan while Theorem \ref{Theorem critical case p0} and Theorem \ref{Theorem critical case p1} deal with the critical case that was not discussed in  \cite{GalYag17EdS}.

\begin{proposition}  \label{Proposition lifespan subcrit}
Let $n\geqslant 1$ and $1<p< \max\{p_0(n,k), p_1(n,k)\}$. Let us assume that $u_0\in H^1(\mathbb{R}^n)$ and $u_1\in L^2(\mathbb{R}^n)$ are nonnegative, nontrivial and compactly supported functions with supports contained in $B_R$ for some $R>0$. Let $$u\in \mathcal{C} \big([1,T), H^1(\mathbb{R}^n)\big) \cap \mathcal{C}^1 \big([1,T), L^2(\mathbb{R}^n)\big)\cap L^p_{\mathrm{loc}}\big([1,T)\times \mathbb{R}^n\big)$$ be an energy solution to \eqref{Semi EdeS k} according to Definition \ref{Def energy sol}  with lifespan $T=T(\varepsilon)$ and fulfilling the support condition $\mathrm{supp} \, u(t,\cdot)\subset B_{A_k(t)+R}$ for any $t\in (1,T)$. Then, there exists a positive constant $\varepsilon_0=\varepsilon_0(u_0,u_1,n,p,k,R)$ such that for any $\varepsilon\in (0,\varepsilon_0]$ the energy solution $u$ blows up in finite time. Furthermore, the upper bound estimates for the lifespan
\begin{align*}
T(\varepsilon)\leqslant \begin{cases} C \varepsilon ^{- \left(\frac{2}{p-1}-(1-k) n\right)^{-1}} & \mbox{if}  \ n\leqslant N(k) \ \mbox{and} \  p\in (1, p_1(n,k)), \\ 
C \varepsilon ^{- \left(\frac{2}{p-1}-(1-k) n\right)^{-1}} & \mbox{if}  \ n\in( N(k) ,\widehat{N}(k) ) \ \mbox{and} \   \ p\in (1, p_3(n,k)], \\ 
C \varepsilon ^{- \frac{p(p-1)}{\theta(p,n,k)}} & \mbox{if}  \ n\in( N(k) ,\widehat{N}(k) ) \ \mbox{and} \ \ p\in (p_3(n,k),p_0(n,k)), \\
C \varepsilon ^{- \frac{p(p-1)}{\theta(p,n,k)}} & \mbox{if}  \ n\geqslant \widehat{N}(k) \ \mbox{and} \ \ p\in (1,p_0(n,k)),\end{cases}
\end{align*} hold, where the constant $C>0$ is independent of $\varepsilon$ and $\theta(p,n,k)$ is defined by \eqref{def theta}.
\end{proposition}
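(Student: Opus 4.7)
The plan is to apply the generalized Kato lemma (Lemma \ref{Kato's lemma}) to the spatial average functional $\mathrm{U}(t)\doteq \int_{\mathbb{R}^n} u(t,x)\,\mathrm{d}x$. First I would test the integral identity \eqref{integral identity def energy sol} against a cutoff function equal to $1$ on $\{(s,x):|x|\leqslant R+A_k(s)\}$ for $s\in[1,t]$; this uses the support condition and yields $\mathrm{U}''(t)=t^{1-p}\int_{\mathbb{R}^n}|u(t,x)|^p\,\mathrm{d}x$. Combining with Hölder's inequality and the support bound for $u(t,\cdot)$ produces the differential inequality
\begin{align*}
\mathrm{U}''(t)\gtrsim (R+t)^{-((1-k)n+1)(p-1)}\,|\mathrm{U}(t)|^p,
\end{align*}
which furnishes hypothesis \eqref{F'' lower bound Kato lemma} with $q\doteq ((1-k)n+1)(p-1)$.

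Next I would produce two polynomial lower bounds for $\mathrm{U}$. The first uses convexity of $\mathrm{U}$ (since $\mathrm{U}''\geqslant 0$) together with $u_0,u_1\geqslant 0$, $\not\equiv 0$ to get $\mathrm{U}(t)\gtrsim \varepsilon t$, which, once plugged back into the differential inequality and integrated twice, gives $\mathrm{U}(t)\gtrsim \varepsilon^p t^{a_1}$ with $a_1=-((1-k)n+1)(p-1)+p+2$. The second inserts the estimate of Lemma \ref{Lemma lower bound int |u|^p} into $\mathrm{U}''$ and integrates twice, giving $\mathrm{U}(t)\gtrsim \varepsilon^p t^{a_2}$ with $a_2=(1-k)(n-1)(1-\tfrac{p}{2})+\tfrac{kp}{2}+3-p$. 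Taking $a\doteq\max\{a_1,a_2\}$ and $A\approx\varepsilon^p$ verifies \eqref{F lower bound Kato lemma}, while \eqref{F(0), F'(0) conditions Kato lemma} follows from the positivity of $u_0,u_1$.

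At this point Lemma \ref{Kato's lemma} yields $T(\varepsilon)\lesssim \varepsilon^{-p(p-1)/(2M)}$, where $M=\frac{p-1}{2}a-\frac{q}{2}+1$ has two admissible values: $M_1=\frac{p}{2}[2-(1-k)n(p-1)]$ corresponding to $a=a_1$, which is positive iff $p<p_1(n,k)$; and $M_2=\frac{1}{2}\theta(p,n,k)$ corresponding to $a=a_2$, which is positive iff $p<p_0(n,k)$. Thus $M>0$ iff $1<p<\max\{p_0(n,k),p_1(n,k)\}$, which gives the blow-up range and the two possible upper bounds in \eqref{lifespan estimate subcrit case}. A small separate check, using the ordering \eqref{order p0,p1,p2}, confirms that $a>1$ (hence $a>0$ as Lemma \ref{Kato's lemma} requires) automatically whenever $M>0$.

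The main work—and the step where the four distinct branches of the proposition are born—is the comparison between $a_1$ and $a_2$, since the sharper upper bound on $T(\varepsilon)$ corresponds to the larger exponent. A direct computation reduces $a_1\geqslant a_2$ to $((1-k)n-1)p\leqslant 2(1-k)$, which introduces the threshold dimensions $\widetilde{N}(k)=1/(1-k)$ and $\widehat{N}(k)=2+1/(1-k)$ and the auxiliary exponent $p_3(n,k)=2(1-k)/((1-k)n-1)$. Splitting into the cases $n\leqslant \widetilde{N}(k)$, $\widetilde{N}(k)<n<N(k)$, $n=N(k)$, $N(k)<n<\widehat{N}(k)$, and $n\geqslant \widehat{N}(k)$, and exploiting the ordering of $p_0,p_1,p_2,p_3$ in each regime, one sees that $p_3(n,k)$ interacts nontrivially with the admissible range $p<\max\{p_0,p_1\}$ only when $N(k)<n<\widehat{N}(k)$. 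Collecting the winning exponent in each regime produces exactly the four-branch piecewise estimate stated in the proposition.
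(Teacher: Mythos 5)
Your proposal is correct and follows essentially the same route as the paper: testing \eqref{integral identity def energy sol} against a cutoff to obtain \eqref{equality U''} and \eqref{differential ineq U''}, deriving the two lower bounds \eqref{lower bound U convex} and \eqref{lower bound U int |u|^p} (from convexity and from Lemma \ref{Lemma lower bound int |u|^p}, respectively), applying Lemma \ref{Kato's lemma} with $q=((1-k)n+1)(p-1)$ and $M=\max\{M_1,M_2\}$, verifying $a>1$ via the exponent $p_2(n,k)$ and the ordering \eqref{order p0,p1,p2}, and finally resolving the four branches by comparing $a_1$ and $a_2$ through the inequality \eqref{inequality p3}, the thresholds $\widetilde{N}(k),\widehat{N}(k)$, and the exponent $p_3(n,k)$ — all exactly as in Section \ref{Section subcritical case}.
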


\section[Semilinear wave equation in EdeS spacetime: critical case $p=p_1(n,k)$]{Semilinear wave equation in EdeS spacetime: 2nd critical case}  \label{Section critical case p1}

In Section \ref{Section subcritical case} we derived the upper bound for the lifespan in the subcritical case, while in Section \ref{Section critical case p0} we studied the critical case $p=p_0(n,k)$. We have already remarked that $p=p_0(n,k)$ is the critical case when $n >N(k)$. Therefore, it remains to consider the critical case $p=p_1(n,k)$ when $n\leqslant N(k)$. In this section, we are going to prove a blow -- up result even in this critical case $p=p_1(n,k)$ and to derive the corresponding upper bound estimate for the lifespan. Even in this critical case, our approach will be based on a basic iteration argument combined with the slicing procedure we already applied in Section \ref{Section critical case p0}.

As time -- depending functional we will use the same one employed in Section \ref{Section subcritical case}, namely the function $\mathrm{U}$ defined in \eqref{def U}. Then, since $p=p_1(n,k)$ is equivalent to the condition 
\begin{align}\label{condition p=p1 equiv}
((1-k)n+1)(p-1)=p+1,
\end{align}
 we may rewrite \eqref{differential ineq U''} as 
\begin{align}\label{iteration frame 2nd crit case}
\mathrm{U}(t) \geqslant C \int_1^t\int_1^s (R+\tau)^{-(p+1)}\big(\mathrm{U}(\tau)\big)^p \, \mathrm{d}\tau \, \mathrm{d}s
\end{align} for any $t\in (1,T)$ and  for a suitable positive constant $C>0$. Let us point out that \eqref{iteration frame 2nd crit case} will be the iteration frame in the iteration procedure for the critical case $p=p_1(n,k)$. 

We know that $\mathrm{U}(t)\geqslant K \varepsilon\, t$ for any $t\in (1,T)$, where $K$ is a suitable positive constant, provided that $u_0,u_1$ are nonnegative, nontrivial and compactly supported (cf. the estimate in \eqref{U > epsilon}). Therefore,
\begin{align}
\mathrm{U}(t) & \geqslant C K^p \varepsilon^p \int_1^t\int_1^s (R+\tau)^{-(p+1)}\tau ^p \, \mathrm{d}\tau \, \mathrm{d}s  \geqslant C K^p (R+1)^{-(p+1)} \varepsilon^p \int_1^t\int_1^s \tau ^{-1} \, \mathrm{d}\tau \, \mathrm{d}s \notag \\
& = C K^p (R+1)^{-(p+1)} \varepsilon^p \int_1^t\log s \, \mathrm{d}s  \geqslant C K^p (R+1)^{-(p+1)} \varepsilon^p \int_{2t/3}^t\log s \, \mathrm{d}s \notag \\ &\geqslant  3^{-1} C K^p (R+1)^{-(p+1)} \varepsilon^p \, t \log \left( \tfrac{2t}{3}\right) \label{1st lower bound U p=p1}
\end{align} for $t \geqslant \ell_0=3/2$, where we used $R+\tau\leqslant (R+1)\tau$ for $\tau\geqslant 1$.

Hence, by using recursively \eqref{iteration frame 2nd crit case}, we are going to prove now the sequence of lower bound estimates
\begin{align}\label{lower bound U j p=p1}
\mathrm{U}(t)\geqslant K_j \, t \left(\log \left(\frac{t}{\ell_j}\right)\right)^{\sigma_j} \qquad \mbox{for} \ t\geqslant \ell_j
\end{align} for any $j\in \mathbb{N}$, where the sequence of parameters $\{\ell_j\}_{j\in\mathbb{N}}$ is defined as in Section \ref{Subsection iteration frame}, i.e. $\ell_j=2-2^{-(j+1)}$, and $\{K_j\}_{j\in\mathbb{N}}$, $\{\sigma\}_{j\in\mathbb{N}}$ are sequences of positive reals that we shall determine afterwards.

We remark that for $j=0$ \eqref{lower bound U j p=p1} holds true thanks to \eqref{1st lower bound U p=p1}, provided that $K_0= (CK^p (R+1)^{-(p+1)} \varepsilon^p)/3$ and $\sigma_0=1$. Next we are going to prove \eqref{lower bound U j p=p1} by using an inductive argument. Assumed the validity of \eqref{lower bound U j p=p1} for some $j\geqslant 0$ we have to prove \eqref{lower bound U j p=p1} for $j+1$. For this purpose, we plug \eqref{lower bound U j p=p1} in \eqref{iteration frame 2nd crit case}, thus, after shrinking the domain of integration, we have
\begin{align*}
\mathrm{U}(t) & \geqslant CK_j^p \int_{\ell_j}^t\int_{\ell_j}^s (R+\tau)^{-(p+1)} \tau^p \left(\log \left( \tfrac{\tau}{\ell_j}\right)\right)^{\sigma_j p} \mathrm{d}\tau \, \mathrm{d}s \\
 & \geqslant C(R+1)^{-(p+1)} K_j^p \int_{\ell_j}^t\int_{\ell_j}^s \tau^{-1}  \left(\log \left( \tfrac{\tau}{\ell_j}\right)\right)^{\sigma_j p} \mathrm{d}\tau \, \mathrm{d}s \\ & = C(R+1)^{-(p+1)} K_j^p (\sigma_j p+1)^{-1}\int_{\ell_j}^t  \left(\log \left( \tfrac{s}{\ell_j}\right)\right)^{\sigma_j p+1}   \mathrm{d}s
\end{align*} for $t\geqslant \ell_{j+1}$. If we shrink the domain of integration to $[(\ell_j/\ell_{j+1})t,t]$ in the last $s$ -- integral we get
\begin{align*}
\mathrm{U}(t) & \geqslant  C (R+1)^{-(p+1)} K_j^p (\sigma_j p+1)^{-1}\int_{\tfrac{\ell_j t}{\ell_{j+1}}}^t  \left(\log \left( \tfrac{s}{\ell_j}\right)\right)^{\sigma_j p+1}   \mathrm{d}s \\ &\geqslant  C (R+1)^{-(p+1)} K_j^p (\sigma_j p+1)^{-1} \left(1- \tfrac{\ell_j}{\ell_{j+1}}\right) t \left(\log \left( \tfrac{t}{\ell_{j+1}}\right)\right)^{\sigma_j p+1}  \\
& \geqslant  C (R+1)^{-(p+1)} 2^{-(j+3)}K_j^p (\sigma_j p+1)^{-1}  t \left(\log \left( \tfrac{t}{\ell_{j+1}}\right)\right)^{\sigma_j p+1}  
\end{align*} for $t\geqslant \ell_{j+1}$, where in the last step we applied the inequality $1-\ell_j/\ell_{j+1}>2^{-(j+3)}$. Also, we proved \eqref{lower bound U j p=p1} for $j+1$ provided that 
\begin{align*}
K_{j+1}\doteq  C (R+1)^{-(p+1)} 2^{-(j+3)} (\sigma_j p+1)^{-1} K_j^p \quad \mbox{and} \quad \sigma_{j+1} \doteq \sigma_j p+1.
\end{align*}

Next we determine a lower bound estimate for $K_j$.  First we find the value of the exponent $\sigma_j$. Applying iteratively the relation $\sigma_j=1+p\sigma_{j-1}$ and the initial exponent $\sigma_0=1$, we get 
\begin{align}\label{explit expressions sigmaj}
\sigma_j & = \sigma_0 p^j +\sum_{k=0}^{j-1} p^k =  \tfrac{p^{j+1}-1}{p-1}.
\end{align} In particular, $\sigma_{j-1}p+1= \sigma_j\leqslant p^{j+1}/(p-1) $ implies that 
\begin{align} \label{lower bound Kj no.1}
K_j \geqslant L\, (2 p)^{-j} K^p_{j-1}
\end{align} for any $j\geqslant 1$, where $L\doteq {2^{-2}} C (R+1)^{-(p+1)} (p-1)/p$. Applying the logarithmic function to both sides of  \eqref{lower bound Kj no.1} and reusing the resulting inequality in an iterative way, we arrive at
\begin{align*}
\log K_j & \geqslant p \log K_{j-1} -j \log (2p)+\log L \\
& \geqslant \ldots \geqslant p^j \log K_0 -\Bigg(\sum_{k=0}^{j-1}(j-k)p^k \Bigg)\log (2p)+\Bigg(\sum_{k=0}^{j-1} p^k \Bigg)\log L  \\
& = p^j \left(\log \left(3^{-1}CK^p (R+1)^{-(p+1)} \varepsilon^p\right) -\frac{p\log (2p)}{(p-1)^2}+\frac{\log L }{p-1}\right)+\left( \frac{j}{p-1}+\frac{p}{(p-1)^2}\right)\log (2p)-\frac{\log L}{p-1},
\end{align*} where we applied again the identity \eqref{identity sum (j-k)p^k}. Let us define $j_1=j_1(n,p,k)$ as the smallest nonnegative integer such that $$j_1\geqslant \frac{\log L}{\log (2p)}-\frac{p}{p-1}.$$ Hence, for any $j\geqslant j_1$ the estimate 
\begin{align} \label{lower bound Kj no.2}
\log K_j & \geqslant  p^j \left(\log \left( 3^{-1} CK^p (R+1)^{-(p+1)}  \varepsilon^p\right) -\frac{p\log (2p)}{(p-1)^2}+\frac{\log L}{p-1}\right) = p^j \log ( N \varepsilon^p)
\end{align} holds, where $N\doteq 3^{-1}CK^p (R+1)^{-(p+1)} (2p)^{-p/(p-1)^2}L^{1/(p-1)}$. 

Combining \eqref{lower bound U j p=p1}, \eqref{explit expressions sigmaj} and \eqref{lower bound Kj no.2}, we arrive at
\begin{align*}
\mathrm{U}(t)&\geqslant \exp \left( p^j\log(N\varepsilon^p)\right) t \left(\log \left(\tfrac t{\ell_j}\right)\right)^{\sigma_j}  \\ & \geqslant \exp \left( p^j\log(N\varepsilon^p)\right) t \left( \tfrac 12 \log t \right)^{(p^{j+1}-1)/(p-1)} \\
&= \exp \left( p^j\log\left( 2^{-p/(p-1)}N\varepsilon^p \left(\log t \right)^{p/(p-1)}\right) \right) t \left( \tfrac 12 \log  t \right)^{-1/(p-1)} 
\end{align*} for $t\geqslant 4$ and for any $j\geqslant j_1$, where we applied the inequality $\log (t/ \ell_j)\geqslant \log(t/2) \geqslant (1/2) \log t$ for all $t\geqslant 4$.
 If we denote $H(t,\varepsilon)\doteq 2^{-p/(p-1)}N\varepsilon^p \left(\log t\right)^{p/(p-1)}$, the last estimate may be rewritten as
 \begin{align}\label{final lower bound U}
 \mathrm{U}(t)&\geqslant \exp \big( p^j \log H(t,\varepsilon)\big) t \left( \tfrac 12 \log t \right)^{-1/(p-1)} 
 \end{align} for $t\geqslant 4$ and any $j\geqslant j_1$.

 Let us fix  $\varepsilon_0=\varepsilon_0(n,p,k,R,u_0,u_1)$ so that
 \begin{align*}
 \exp \left(2N^{-(1-p)/p}\varepsilon_0^{-(p-1)}\right)\geqslant 4.
 \end{align*}
 Then, for any $\varepsilon\in (0,\varepsilon_0]$ and for $t> \exp \left(2N^{-(1-p)/p}\varepsilon^{-(p-1)}\right)$ we get $t\geqslant 4$ and $H(t,\varepsilon)>1$. Thus, for any $\varepsilon\in (0,\varepsilon_0]$ and for $t> \exp \left(2N^{-(1-p)/p}\varepsilon^{-(p-1)}\right)$ as $j\to \infty$ in \eqref{final lower bound U} we find that the lower bound for $\mathrm{U}(t)$ blows up and, consequently, $\mathrm{U}(t)$ cannot be finite too. Summarizing, we proved  that $\mathrm{U}$ blows up in finite time and, besides, we showed the upper bound estimate for the lifespan $$T(\varepsilon)\leqslant \exp \left(2N^{-(1-p)/p}\varepsilon^{-(p-1)}\right).$$ 

Altogether, we established Theorem \ref{Theorem critical case p1} in the critical case $p=p_1(n,k)$.

\begin{remark} Combining the results from Theorems \ref{Theorem critical case p0} and \ref{Theorem critical case p1} and Proposition \ref{Proposition lifespan subcrit}, we a full picture of the upper bound estimates for the lifespan of local in time solutions to \eqref{Semi EdeS k} whenever $1<p\leqslant \max\{p_0(n,k),p_1(n,k)\}$, of course, under suitable sign, size and support assumptions for the initial data.
\end{remark}


\section{Final remarks}

Let us compare our results with the corresponding ones for the semilinear wave equation in the flat case. First, we point out that due to the presence of the term $t^{1-p}$ in the semilinear term in \eqref{Semi EdeS k u}, we have a competition between the two exponents $p_0, p_1$ to be the critical exponent. This for the classical semilinear wave equation with power nonlinearity does not happen since $p_{\mathrm{Str}}(n)\geqslant \frac{n+1}{n-1}$ for any $n\geqslant 2$. However, a similar situation it has been observed when lower order terms with time -- dependent coefficients in the \emph{scale -- invariant case} are present, with a competition between a shift of Fujita exponent and a shift Strauss exponent (cf. \cite
{DLR15,DL15,NPR16,PalRei18,PT18,Pal18odd,Pal18even}).
On the other hand, the presence of the exponent $p_3$ for dimensions $n\in (N(k),\widehat{N}(k))$ to distinguish among two different upper bounds for the lifespan depending on the range for $p$ is exactly what happens for the semilinear wave equation in spatial dimensions $n=2$ (see \cite{Tak15,IKTW19}). Moreover, the situation for \eqref{Semi EdeS k} when $n\leqslant N(k)$ is completely analogous to what happens for the semilinear wave equation when $n=1$, see \cite{Zhou92} for the Euclidean case.

After the completion of the final version of this work, we found out the existence of the paper \cite{TW20}, where a more general model is considered. We point out that the approach we used in the critical case is completely different, and that we slightly improved their result in the special case of the semilinear wave equation in the generalized Einstein -- de Sitter spacetime, by removing the assumption on the size of the support of the Cauchy data (cf. \cite[Theorem 2.3]{TW20}).

\section*{Acknowledgments}

A. Palmieri 
is supported by the GNAMPA project `Problemi stazionari e di evoluzione nelle equazioni di campo nonlineari dispersive'. The author acknowledges Karen Yagdjian (UTRGV) and Hiroyuki Takamura (Tohoku Univ.) for valuable discussions on the model considered in this work.

\appendix

\section{Alternative proof of Proposition \ref{Proposition representations y0 and y1} in the special case $k=2/3$  } \label{Appendix solutions y''-lambda^2 t^(-4/3)y=0}

In this appendix we determine the representation of the solutions $\{y_j(t,s;\lambda)\}_{j\in\{0,1\}}$ to the Cauchy problems
\begin{align}\label{CP yj(t,s;lambda,2/3)} 
\begin{cases}  \partial_t^2 y_j(t,s;\lambda) - \lambda^2 t^{-\frac{4}{3}} y_j(t,s;\lambda)= 0, &  t>s\geqslant 1, \\
y_j(s,s;\lambda)= \delta_{0j}, \\
 \partial_t y_j(s,s;\lambda)= \delta_{1j},
\end{cases}
\end{align} where $\lambda>0$ is a parameter and $\delta_{ij}$ denotes the Kronecker delta. Let us introduce the change of variables $z=z(t;\lambda)\doteq -2\lambda \phi(t)$, where for the sake of brevity we denote simply $\phi(t)\equiv \phi_{\frac{2}{3}}(t)= 3t^{1/3}$. Furthermore, we perform the transformation $y(t,\lambda)=w(z) \, \mathrm{e}^{-\frac{z}{2}}$. A straightforward computation shows that
\begin{align*}
\partial_t y(t,\lambda) & = \left[w'(z)-\tfrac 12 w(z)\right] \mathrm{e}^{-\frac{z}{2}} \tfrac{\partial z}{\partial t}, \\
\partial_t^2 y(t,\lambda) & = \left[w''(z)-w'(z)+\tfrac 14 w(z)\right] \mathrm{e}^{-\frac{z}{2}} \left(\tfrac{\partial z}{\partial t}\right)^2 +\left[w'(z)-\tfrac 12 w(z)\right] \mathrm{e}^{-\frac{z}{2}} \tfrac{\partial^2 z}{\partial t^2}.
\end{align*}
 Consequently, $y$ solves the equation
\begin{align}\label{equation y 2/3}
\frac{\mathrm{d}^2 y}{\mathrm{d} t^2} -\lambda^2 t^{-\frac{4}{3}}y=0
\end{align} if and only if $z$ is a solution of the confluent hypergeometric equation
\begin{align}\label{confluent hypergeo eq}
z w''(z)-(z+2)w'(z)+w(z)=0,
\end{align} where we used $\frac{\partial^2 z}{\partial t^2}= 4\lambda t^{-4/3} (\phi(t))^{-1}$ and $\left(\tfrac{\partial z}{\partial t}\right)^2 = 4\lambda^2 t^{-4/3}$. According to \cite[Equation 13.2.32, p. 324]{NIST10}, a fundamental pair of solutions to \eqref{confluent hypergeo eq} is given by $z^3 M(z;2,4)$ and $z+2$. Here $M(z;a,c)$ denotes  Kummer's function $$M(z;a,c)\doteq \sum_{h=0}^\infty \frac{(a)_h}{(c)_h h!} z^h,$$ where $(b)_h$ denotes the Pochhammer symbol (rising factorial) and is defined by $(b)_h=1$ for $h=0$ and $(b)_h= b(b+1) \cdots (b+h-1)$ for $h\geqslant 1$.

\begin{lemma} For any $z\in \mathbb{R}$ the following identity holds
\begin{align}\label{z^3 M(2,4,z)}
z^3 M(z;2,4) = 6 \big( \mathrm{e}^z(z-2)+z+2\big).
\end{align}
\end{lemma}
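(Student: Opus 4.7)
The plan is to exploit the Euler integral representation of Kummer's confluent hypergeometric function: for $\operatorname{Re} c > \operatorname{Re} a > 0$ one has
\begin{align*}
M(z;a,c) = \frac{\Gamma(c)}{\Gamma(a)\,\Gamma(c-a)} \int_0^1 \mathrm{e}^{zt}\, t^{a-1}(1-t)^{c-a-1}\,\mathrm{d}t.
\end{align*}
Specializing to $a=2$, $c=4$ and using $\Gamma(4)=6$, $\Gamma(2)=1$, we get
\begin{align*}
M(z;2,4) = 6 \int_0^1 \mathrm{e}^{zt}\, t(1-t)\,\mathrm{d}t.
\end{align*}

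The key step is then to evaluate the integral $I(z)\doteq \int_0^1 \mathrm{e}^{zt}\,t(1-t)\,\mathrm{d}t$ explicitly. Integration by parts with $u=t(1-t)$ and $\mathrm{d}v=\mathrm{e}^{zt}\,\mathrm{d}t$ kills the boundary term (since $t(1-t)$ vanishes at $t=0$ and $t=1$), producing $I(z)=-\frac{1}{z}\int_0^1 \mathrm{e}^{zt}(1-2t)\,\mathrm{d}t$. A second integration by parts evaluates this to
\begin{align*}
I(z) = \frac{\mathrm{e}^z+1}{z^2} - \frac{2(\mathrm{e}^z-1)}{z^3} = \frac{\mathrm{e}^z(z-2)+z+2}{z^3}.
\end{align*}
Multiplying by $6z^3$ yields the claimed identity for $z\neq 0$, and since both sides are entire functions of $z$ that vanish at $z=0$, the identity extends to all $z\in\mathbb{R}$ (indeed to all of $\mathbb{C}$).

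As a sanity check (or as an alternative proof) one may compare power series. Using $(2)_h=(h+1)!$ and $(4)_h=(h+3)!/6$, the coefficient of $z^h$ in $M(z;2,4)$ is $\frac{6}{(h+2)(h+3)\,h!}$, so after multiplying by $z^3$ and re-indexing $k=h+3$,
\begin{align*}
z^3 M(z;2,4) = \sum_{k=3}^{\infty} \frac{6}{k(k-1)(k-3)!}\, z^k.
\end{align*}
On the right-hand side, expanding $\mathrm{e}^z(z-2)=\sum_{n\geqslant 1}\frac{n-2}{n!}z^n -2$, the constant and linear terms cancel against $z+2$, the quadratic term vanishes on its own (coefficient $n-2=0$ at $n=2$), and for $k\geqslant 3$ the coefficient is $\frac{6(k-2)}{k!}=\frac{6}{k(k-1)(k-3)!}$, matching term by term.

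I do not expect any real obstacle: the only delicate point is to verify that the constant, linear, and quadratic coefficients on the right-hand side all vanish, as they must in order for the factor $z^3$ on the left to be accounted for; the calculation above shows this is automatic.
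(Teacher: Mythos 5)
Your proposal is correct, and your primary argument is genuinely different from the paper's. The paper proves the identity purely at the level of Taylor coefficients: it sets $f(z)=6\big(\mathrm{e}^z(z-2)+z+2\big)$, checks $f(0)=f'(0)=f''(0)=0$, establishes by induction the closed form $f^{(2+h)}(z)=6\,\mathrm{e}^z(z+h)$, and then matches the resulting coefficients $6(h+1)/(h+3)!$ against $(2)_h/\big((4)_h\, h!\big)$ via $(2)_h=(h+1)!$ and $(4)_h=(h+3)!/6$. You instead invoke the Euler integral representation
\begin{align*}
M(z;2,4)=\frac{\Gamma(4)}{\Gamma(2)\Gamma(2)}\int_0^1 \mathrm{e}^{zt}\,t(1-t)\,\mathrm{d}t = 6\int_0^1 \mathrm{e}^{zt}\,t(1-t)\,\mathrm{d}t,
\end{align*}
valid here since $c=4>a=2>0$, and evaluate the elementary integral by two integrations by parts; I checked the computation and the result $I(z)=\big(\mathrm{e}^z(z-2)+z+2\big)/z^3$ is right, as is your removal of the apparent singularity at $z=0$ by entirety and continuity. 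What your route buys is a structural explanation of where the right-hand side comes from — it is transparently the value of a two-term integration by parts, with the vanishing of the boundary term (because $t(1-t)$ vanishes at both endpoints) doing the work that the paper's cancellation of the constant, linear, and quadratic Taylor coefficients does — at the modest cost of quoting the integral representation as an external fact. The paper's argument is self-contained modulo the definition of $M$, needing only the Pochhammer identities, but requires the small inductive step on derivatives. Your secondary series verification is essentially the paper's proof, organized slightly more directly (you expand $\mathrm{e}^z(z-2)$ outright instead of differentiating $f$ recursively), and your coefficient bookkeeping — $\frac{6(k-2)}{k!}=\frac{6}{k(k-1)(k-3)!}$ for $k\geqslant 3$ — is correct.
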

\begin{proof}
In order to prove \eqref{z^3 M(2,4,z)} we are going to consider the corresponding Taylor series expansions. Let us denote $f(z)\doteq 6 \big( \mathrm{e}^z(z-2)+z+2\big)$. Since 
\begin{align*}
f'(z) & = 6 \big( \mathrm{e}^z(z-1)+1\big), \qquad
f''(z)  = 6 \,  \mathrm{e}^z z,
\end{align*} then, $f(0)=f'(0)=f''(0)=0$.

 Moreover, one can prove recursively that $f^{(2+h)}(z)= 6 \, \mathrm{e}^z(z+h) $ for any $h\geqslant 0$. Therefore,
 \begin{align*}
 f(z) =  \sum_{h=0}^\infty \tfrac{f^{(h)}(0)}{ h!} z^h =   z^3 \sum_{h=0}^\infty \tfrac{f^{(h+3)}(0)}{ (h+3)!} z^{h} =   z^3 \sum_{h=0}^\infty \tfrac{6(h+1)}{ (h+3)!} z^{h}.
 \end{align*} We remark that 
 \begin{align*}
 \tfrac{(2)_h}{(4)_h h!} = \tfrac{(h+1)!}{(1/6) (h+3)!\,  h!}=\tfrac{6(h+1)}{(h+3)!}
 \end{align*} for any $h\in \mathbb{N}$, because of $(2)_h= (h+1)!$ and $(4)_h=(1/6) (h+3)!$. Hence,
 \begin{align*}
 f(z) =  \ z^3 \sum_{h=0}^\infty \tfrac{6(h+1)}{ (h+3)!} z^{h} = \ z^3 \sum_{h=0}^\infty  \tfrac{(2)_h}{(4)_h h!}z^{h} =z^3 M(z;2,4),
 \end{align*} that is, we proved \eqref{z^3 M(2,4,z)}.
\end{proof}

According to our previous remark, by \eqref{z^3 M(2,4,z)} it follows that $6(\mathrm{e}^z(z-2) +z+2)$ and $z+2$ are a fundamental system of solutions for \eqref{confluent hypergeo eq}. For the sake of simplicity we may consider $\{g_1,g_2\}$, where $g_1(z)\doteq \mathrm{e}^z(z-2) $ and $g_2(z)\doteq z+2$ as a basis of the solution space for \eqref{equation y 2/3}. We point out that $\{g_1,g_2\}$ is clearly a fundamental system of solutions as
\begin{align*}
\mathcal{W} (g_1,g_2)(z) = g_1(z)g_2'(z)- g_2(z)g_1'(z) =-z^2 \mathrm{e}^{z}.
\end{align*}  Thus, the pair of functions
\begin{align*}
\widetilde{V}_0(t,\lambda) & \doteq - \tfrac 12 \mathrm{e}^{-\frac{z}{2}}g_1(z) =  \mathrm{e}^{\frac{z}{2}} \left(-\tfrac z2+1\right) = \mathrm{e}^{-\lambda \phi(t)} \left(\lambda\phi(t)+1\right), \\
\widetilde{V}_1(t,\lambda) & \doteq - \tfrac 12 \mathrm{e}^{-\frac{z}{2}}g_2(z) =  \mathrm{e}^{-\frac{z}{2}} \left(-\tfrac z2-1\right) = \mathrm{e}^{\lambda \phi(t)} \left(\lambda\phi(t)-1\right)
\end{align*} form a system of fundamental solutions to \eqref{equation y 2/3}.

Finally, we prove the representations \eqref{def y0(t,s;lambda,2/3)} and \eqref{def y1(t,s;lambda,2/3)} by using $\big\{\widetilde{V}_0,\widetilde{V}_1 \big\}$ as fundamental system of solutions to \eqref{equation y 2/3}.

\begin{proposition} Let $y_0(t,s;\lambda,2/3)$ and $y_1(t,s;\lambda,2/3)$ be the functions defined in  \eqref{def y0(t,s;lambda,2/3)} and \eqref{def y1(t,s;lambda,2/3)}, respectively. Then, $y_0(t,s;\lambda,2/3)$ and $y_1(t,s;\lambda,2/3)$ solve the Cauchy problem \eqref{CP yj(t,s;lambda,2/3)} for $j=0$ and $j=1$, respectively.
\end{proposition}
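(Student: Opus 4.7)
My plan is to mimic the derivation of the general representations carried out in Proposition \ref{Proposition representations y0 and y1}, but now using the elementary fundamental system $\{\widetilde V_0,\widetilde V_1\}$ of \eqref{equation y 2/3} constructed just before the statement, instead of $\{V_0,V_1\}$. Since $\widetilde V_0,\widetilde V_1$ already solve the homogeneous equation, the only thing to check is the correct matching of the Cauchy data at $t=s$, so that the result follows from the uniqueness of solutions to \eqref{CP yj(t,s;lambda,2/3)}.

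First, I would compute $\partial_t \widetilde V_0$ and $\partial_t \widetilde V_1$ from the definitions
$\widetilde V_0(t,\lambda)=\mathrm e^{-\lambda\phi(t)}(\lambda\phi(t)+1)$ and
$\widetilde V_1(t,\lambda)=\mathrm e^{\lambda\phi(t)}(\lambda\phi(t)-1)$,
using $\phi'(t)=t^{-2/3}$. The cross terms cancel nicely and give
$\partial_t\widetilde V_0(t,\lambda)=-\lambda^2\phi(t)\phi'(t)\,\mathrm e^{-\lambda\phi(t)}=-3\lambda^2 t^{-1/3}\mathrm e^{-\lambda\phi(t)}$ and, analogously, $\partial_t\widetilde V_1(t,\lambda)=3\lambda^2 t^{-1/3}\mathrm e^{\lambda\phi(t)}$. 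A direct computation of the Wronskian $\mathcal W(\widetilde V_0,\widetilde V_1)$ then produces a constant value, namely $18\lambda^3$ (which is consistent with the absence of a first-order term in \eqref{equation y 2/3}).

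Second, as in the proof of Proposition \ref{Proposition representations y0 and y1}, I would write the ansatz
\begin{align*}
y_j(t,s;\lambda)=\widetilde a_j(s;\lambda)\widetilde V_0(t;\lambda)+\widetilde b_j(s;\lambda)\widetilde V_1(t;\lambda),\qquad j\in\{0,1\},
\end{align*}
and determine the coefficients $\widetilde a_j(s;\lambda),\widetilde b_j(s;\lambda)$ by imposing the initial conditions $\partial_t^i y_j(s,s;\lambda)=\delta_{ij}$. This yields a $2\times 2$ linear system whose matrix has determinant $\mathcal W(\widetilde V_0,\widetilde V_1)=18\lambda^3$; Cramer's rule then gives the explicit formulas
\begin{align*}
\widetilde a_0=\tfrac{\partial_t \widetilde V_1(s;\lambda)}{18\lambda^3},\ \ \widetilde b_0=-\tfrac{\partial_t \widetilde V_0(s;\lambda)}{18\lambda^3},\quad \widetilde a_1=-\tfrac{\widetilde V_1(s;\lambda)}{18\lambda^3},\ \ \widetilde b_1=\tfrac{\widetilde V_0(s;\lambda)}{18\lambda^3}.
\end{align*}

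Third, I would substitute these coefficients into the ansatz and group the exponentials so as to exhibit hyperbolic functions of the argument $3\lambda(t^{1/3}-s^{1/3})=\lambda(\phi(t)-\phi(s))$. For $y_0$, the combination $\mathrm e^{-\lambda\phi(t)}\mathrm e^{\lambda\phi(s)}(\lambda\phi(t)+1)+\mathrm e^{\lambda\phi(t)}\mathrm e^{-\lambda\phi(s)}(\lambda\phi(t)-1)$ reorganizes itself into $2\lambda\phi(t)\cosh(\lambda(\phi(t)-\phi(s)))-2\sinh(\lambda(\phi(t)-\phi(s)))$, which, after dividing by the Wronskian and simplifying $\phi(t)=3t^{1/3}$, reproduces exactly \eqref{def y0(t,s;lambda,2/3)}. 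For $y_1$ the same kind of bookkeeping, using the identities $(a-1)(b+1)=ab+(b-a)-1$ and $(a+1)(b-1)=ab-(b-a)-1$ with $a=\lambda\phi(s)$, $b=\lambda\phi(t)$, reduces the expression to a linear combination of $\cosh(\lambda(\phi(t)-\phi(s)))$ and $\sinh(\lambda(\phi(t)-\phi(s)))$ whose coefficients match \eqref{def y1(t,s;lambda,2/3)}.

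The computations are entirely elementary, so there is no real obstacle; the only care-demanding point is the bookkeeping of the coefficients of $\sinh$ and $\cosh$ when simplifying the products of exponentials and polynomials in $\lambda\phi$, in particular to verify that at $t=s$ one recovers the prescribed Cauchy data $y_j(s,s;\lambda)=\delta_{0j}$ and $\partial_t y_j(s,s;\lambda)=\delta_{1j}$, which is automatic from the construction via Cramer's rule but provides a useful sanity check.
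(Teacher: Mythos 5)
Your proposal follows the paper's appendix proof essentially verbatim: the same elementary fundamental system $\{\widetilde V_0,\widetilde V_1\}$ of \eqref{equation y 2/3}, the same constant Wronskian $18\lambda^3$, the same Cramer's-rule determination of the coefficients, and the same regrouping of exponentials into hyperbolic functions of $\lambda(\phi(t)-\phi(s))$, so the method is sound and the reduction to checking the Cauchy data via uniqueness for \eqref{CP yj(t,s;lambda,2/3)} is exactly what the paper does.

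Two bookkeeping slips deserve correction, though neither is a conceptual gap. First, your two product identities have their right-hand sides interchanged: correctly, $(a-1)(b+1)=ab-(b-a)-1$ and $(a+1)(b-1)=ab+(b-a)-1$, and if one used your versions verbatim the $\cosh$ coefficient in $y_1$ would come out with the wrong sign. Second, in the $y_0$ step the combination you display retains the exponentials $\mathrm{e}^{\pm\lambda\phi(s)}$ but drops the common prefactor $\lambda^2\phi(s)\phi'(s)=3\lambda^2 s^{-1/3}$ carried by $\partial_t\widetilde V_1(s;\lambda)$ and $-\partial_t\widetilde V_0(s;\lambda)$, so dividing by the Wronskian alone does not yet reproduce \eqref{def y0(t,s;lambda,2/3)}; the prefactor is needed to turn $2\lambda\phi(t)/(18\lambda^3)$ into $(t/s)^{1/3}$. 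Finally, a caution about your stated endpoint for $y_1$: a correct computation yields $\left((st)^{1/3}/\lambda-1/(9\lambda^3)\right)\sinh\big(\lambda(\phi(t)-\phi(s))\big)+\tfrac{1}{3\lambda^2}\big(t^{1/3}-s^{1/3}\big)\cosh\big(\lambda(\phi(t)-\phi(s))\big)$, as it must in order to satisfy $y_1(s,s;\lambda)=0$; the formula \eqref{def y1(t,s;lambda,2/3)} as printed in Section 2 has $\cosh$ and $\sinh$ interchanged (a typo in the paper — the appendix's final display is the correct one), so "matching \eqref{def y1(t,s;lambda,2/3)}" literally would in fact indicate an error rather than confirm the result.
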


\begin{proof}
We know that $\widetilde{V}_0,\widetilde{V}_1$ form a system of independent solutions to \eqref{equation y 2/3}. Also, we can write the solutions $y_j(t,s;\lambda)$, $j=0,1$ of \eqref{CP yj(t,s;lambda,2/3)} as linear combinations of $\widetilde{V}_0,\widetilde{V}_1$ in the  following way
\begin{align} \label{representation yj with aj and bj 2/3}
y_j(t,s;\lambda) = a_j(s;\lambda) \widetilde{V}_0(t;\lambda)+  b_j(s;\lambda) \widetilde{V}_1(t;\lambda)
\end{align} for suitable coefficients $a_j(s;\lambda)$ and $ b_j(s;\lambda)$, $j=0,1$.

 The application of the initial conditions $\partial^i_t y_j(s,s;\lambda)=\delta_{ij}$ yields  the system
\begin{align*}
\left(\begin{array}{cc}
\widetilde{V}_0(s;\lambda) & \widetilde{V}_1(s;\lambda)  \\ 
\partial_t \widetilde{V}_0(s;\lambda) & \partial_t \widetilde{V}_1(s;\lambda) 
\end{array} \right) \left(\begin{array}{cc}
a_0(s;\lambda) & a_1(s;\lambda)  \\ 
b_0(s;\lambda) & b_1(s;\lambda) 
\end{array} \right) = I,
\end{align*} where $I$ denotes the identity matrix. Therefore,
\begin{align}
\left(\begin{array}{cc}
a_0(s;\lambda) & a_1(s;\lambda)  \\ 
b_0(s;\lambda) & b_1(s;\lambda)  \end{array} \right) &= \left(\begin{array}{cc}
\widetilde{V}_0(s;\lambda) & \widetilde{V}_1(s;\lambda)  \\ 
\partial_t \widetilde{V}_0(s;\lambda) & \partial_t \widetilde{V}_1(s;\lambda) 
\end{array} \right)^{-1}  \notag \\
 & = \left(\mathcal{W}(\widetilde{V}_0,\widetilde{V}_1)(s;\lambda)\right)^{-1}\left(\begin{array}{cc}
 \partial_t \widetilde{V}_1(s;\lambda)  & -\widetilde{V}_1(s;\lambda)  \\ 
-\partial_t \widetilde{V}_0(s;\lambda) & \widetilde{V}_0(s;\lambda) 
\end{array} \right). \label{inverse matrix 2/3}
\end{align} The Wronskian $\mathcal{W}(\widetilde{V}_0,\widetilde{V}_1)$ is given by
\begin{align*}
\mathcal{W}(\widetilde{V}_0,\widetilde{V}_1)(t;\lambda) & = \widetilde{V}_0(t;\lambda) \partial_t \widetilde{V}_1(t;\lambda)   - \widetilde{V}_1(t;\lambda) \partial_t \widetilde{V}_0(t;\lambda)  = 2\lambda^3 (\phi(t))^2\phi'(t)   = 18 \lambda^3 ,
\end{align*} where  we employed
\begin{align*}
\partial_t \widetilde{V}_0(t;\lambda)  & = -\lambda^2   \phi(t)\, \phi'(t) \,  \mathrm{e}^{-\lambda\phi(t)}, \\
\partial_t \widetilde{V}_1(t;\lambda)  & = \lambda^2   \phi(t)\, \phi'(t) \, \mathrm{e}^{\lambda\phi(t)}.
\end{align*} Plugging the previous representation of  $\mathcal{W}(\widetilde{V}_0,\widetilde{V}_1)$ in \eqref{inverse matrix 2/3}, we find
\begin{align*}
\left(\begin{array}{cc}
a_0(s;\lambda) & a_1(s;\lambda)  \\ 
b_0(s;\lambda) & b_1(s;\lambda)  \end{array} \right) & = \frac{1}{18 \lambda^3}\left(\begin{array}{cc}
 \partial_t \widetilde{V}_1(s;\lambda)  & -\widetilde{V}_1(s;\lambda)  \\ 
-\partial_t \widetilde{V}_0(s;\lambda) & \widetilde{V}_0(s;\lambda) 
\end{array} \right).
\end{align*}
Let us begin by proving that $y_0(t,s;\lambda)=y_0(t,s;\lambda,2/3)$. Employing the above representation of the coefficients $a_0(s;\lambda),b_0(s;\lambda)$ in \eqref{representation yj with aj and bj 2/3}, we have
\begin{align*}
y_0(t,s;\lambda) &= \left(18\lambda^3\right)^{-1} \big\{\partial_t\widetilde{V}_1(s;\lambda) \widetilde{V}_0(t;\lambda)-\partial_t\widetilde{V}_0(s;\lambda) \widetilde{V}_1(t;\lambda)\big\} \\
& = \left(18\lambda^3\right)^{-1} \lambda^2   \phi(s)\, \phi'(s)  \big\{ \mathrm{e}^{-\lambda (\phi(t)-\phi(s))} \left(\lambda\phi(t)+1\right) + \mathrm{e}^{\lambda (\phi(t)-\phi(s))} \left(\lambda\phi(t)-1\right)\big\} \\
& = 3^{-2}   \phi(s)\, \phi'(s) \phi(t) \cosh \big(\lambda (\phi(t)-\phi(s))\big) - 3^{-2}\lambda^{-1}    \phi(s)\, \phi'(s)   \sinh\big( \lambda (\phi(t)-\phi(s))\big) \\
&= (t/s)^{1/3} \cosh \big(\lambda (\phi(t)-\phi(s))\big) - 1/(3\lambda s^{1/3})    \sinh\big( \lambda (\phi(t)-\phi(s))\big) = y_0(t,s;\lambda,2/3). 
\end{align*}
Analogously, plugging the previously determined expressions for $a_1(s;\lambda),b_1(s;\lambda)$ in \eqref{representation yj with aj and bj 2/3}, we have
\begin{align*}
y_1(t,s;\lambda) &= \left(18\lambda^3\right)^{-1} \big\{ \widetilde{V}_0(s;\lambda) \widetilde{V}_1(t;\lambda)-  \widetilde{V}_1(s;\lambda) \widetilde{V}_0(t;\lambda)\big\}  \\
&= \left(18\lambda^3\right)^{-1} \big\{ (\lambda\phi(s)+1)(\lambda\phi(t)-1) \mathrm{e}^{\lambda (\phi(t)-\phi(s))} -  (\lambda\phi(s)-1)(\lambda\phi(t)+1) \mathrm{e}^{-\lambda (\phi(t)-\phi(s))} \big\}  \\
&= \left(18\lambda^3\right)^{-1} (\lambda^2\phi(t)\phi(s)-1) \big( \mathrm{e}^{\lambda (\phi(t)-\phi(s))}- \mathrm{e}^{-\lambda (\phi(t)-\phi(s))}\big)  \\ & \qquad + \left(18\lambda^3\right)^{-1} \lambda(\phi(t)-\phi(s)) \big( \mathrm{e}^{\lambda (\phi(t)-\phi(s))}+ \mathrm{e}^{-\lambda (\phi(t)-\phi(s))}\big) \\
&= \left((st)^{1/3}/\lambda-1/(9\lambda^3)\right) \sinh\big( \lambda (\phi(t)-\phi(s))\big)+ \left(1/9\lambda^2\right) (\phi(t)-\phi(s)) \cosh\big(\lambda (\phi(t)-\phi(s))\big) \\ &=y_1(t,s;\lambda,2/3).
\end{align*}  The proof is complete.
\end{proof}

\end{document}